\definecolor{darkred}{rgb}{0.7,0,0}
\definecolor{darkblue}{rgb}{0,0,0.7}
\tikzset{nodes={inner sep=0.2em}}
\tikzset{commutative diagrams/.cd,arrow style=tikz}
\DeclareRobustCommand\myiso{\xrightarrow{\!\smash{\raisebox{-0.25ex}{\ensuremath{\scriptstyle\sim}}\!}}}
\DeclareRobustCommand\myosi{\xleftarrow{\!\smash{\raisebox{-0.25ex}{\ensuremath{\scriptstyle\sim}}\!}}}
\patchcmd{\section}{\scshape}{\bfseries}{}{}
\renewcommand{\@secnumfont}{\bfseries}
\newenvironment{enu}[1]{\begin{enumerate}[leftmargin=*,labelsep=0.5em,itemsep=0.1em]#1}{\end{enumerate}}
\crefname{thm}{Theorem}{Theorems}
\crefname{prop}{Proposition}{Propositions}
\crefname{thmintro}{Theorem}{Theorems}
\crefname{propintro}{Proposition}{Propositions}
\crefname{lemma}{Lemma}{Lemmas}
\crefname{rem}{Remark}{Remarks}
\crefname{cor}{Corollary}{Corollaries}
\crefname{defi}{Definition}{Definitions}
\crefname{ex}{Example}{Examples}
\crefname{section}{Section}{Sections}
\newtheoremstyle{standard}{2ex}{2ex}{\itshape}{}{\bfseries}{.}{.5em}{}
\theoremstyle{standard}
\newtheorem{lemma}{Lemma}[section]
\newtheorem{prop}[lemma]{Proposition}
\newtheorem{cor}[lemma]{Corollary} 
\newtheorem{thm}[lemma]{Theorem}
\newtheorem{thmintro}{Theorem}[section]
\newtheorem{propintro}[thmintro]{Proposition}
\newtheoremstyle{definition}{2ex}{2ex}{}{}{\bfseries}{.}{.5em}{}    
\theoremstyle{definition}
\newtheorem{defi}[lemma]{Definition}
\newtheorem{rem}[lemma]{Remark} 
\newtheorem{ex}[lemma]{Example}
\newtheoremstyle{intermediate}{2ex}{2ex}{}{}{\itshape}{.}{.5em}{}    
\theoremstyle{intermediate}
\newtheorem*{recol}{Recollection}
\newtheorem*{conv}{Convention}
\DeclareMathOperator{\Q}{\mathsf{Qcoh}}
\DeclareMathOperator{\Mod}{\mathsf{Mod}}
\DeclareMathOperator{\Cat}{\mathsf{Cat}}
\DeclareMathOperator{\CAlg}{\mathsf{CAlg}}
\DeclareMathOperator{\Set}{\mathsf{Set}}
\DeclareMathOperator{\Sch}{\mathsf{Sch}}
\DeclareMathOperator{\Ch}{\mathsf{Ch}}
\DeclareMathOperator{\Spec}{Spec}
\DeclareMathOperator{\End}{End}
\DeclareMathOperator{\Hom}{Hom}
\DeclareMathOperator{\HOM}{\underline{\Hom}}
\DeclareMathOperator{\Ind}{Ind}
\DeclareMathOperator{\id}{id}
\DeclareMathOperator{\im}{im}
\DeclareMathOperator{\op}{op}
\DeclareMathOperator{\fp}{fp}
\DeclareMathOperator{\Sym}{Sym}
\renewcommand{\c}{\mathrm{c}}
\DeclareMathOperator{\fc}{fc}
\newcommand\mydot{\parbox[b]{0mm}{.\hspace*{-2mm}}}
\DeclareMathOperator{\coboxtimes}{\mathbin{\widehat{\boxtimes}}}
\newcommand{\A}{\mathcal{A}}
\newcommand{\B}{\mathcal{B}}
\newcommand{\C}{\mathcal{C}}
\newcommand{\D}{\mathcal{D}}
\newcommand{\K}{\mathcal{K}}
\renewcommand{\L}{\mathcal{L}}
\renewcommand{\O}{\mathcal{O}}
\newcommand{\IN}{\mathds{N}}
\newcommand{\IZ}{\mathds{Z}}
\newcommand{\IA}{\mathds{A}}
\newcommand{\IK}{\mathds{K}}
\newcommand{\IP}{\mathds{P}}
\title[Localizations of tensor categories and fiber products of schemes]{Localizations of tensor categories\\and fiber products of schemes}
\author{Martin Brandenburg}
\thanks{\emph{E-mail address:} \texttt{brandenburg@uni-muenster.de}}
\begin{document}
%
\begin{abstract}
\noindent 
We prove that the tensor category of quasi-coherent modules $\Q(X \times_S Y)$ on a fiber product of quasi-compact quasi-separated schemes is the bicategorical pushout of $\Q(X)$ and $\Q(Y)$ over $\Q(S)$ in the \mbox{$2$-category} of cocomplete linear tensor categories. In particular, $\Q(X \times Y)$ is the bicategorical coproduct of $\Q(X)$ and $\Q(Y)$. For this we introduce idals, which can be seen as non-embedded ideals, and use them to study localizations of cocomplete tensor categories in general. 
\end{abstract}
\maketitle
%
\section{Introduction}
%
It is a common theme in algebraic geometry that a scheme or stack $X$ is approached via its (tensor) category of quasi-coherent $\O_X$-modules $\Q(X)$ or variants thereof such as its derived category $D(X)$ as a triangulated category or as an $\infty$-category. For this it is necessary to prove reconstruction results \cite{Gab62,Ros95,BO01,Bal02,Lur05,GC15,Ant16,Bra18} and to find (tensor) categorical properties and constructions which correspond to geometric ones \cite{Ros95,Bal10,Sch18,Bra14}.

For example, if $X$ is a quasi-compact quasi-separated scheme and $Y$ is an arbitrary scheme, then any cocontinuous tensor functor $F : \Q(X) \to \Q(Y)$ is induced by a unique morphism $f : Y \to X$ via pullback \cite{BC14}; similar results hold for well-behaved algebraic spaces and algebraic stacks and are usually referred to as Tannaka reconstruction theorems \cite{Lur05,Lur11,Sch12,FI13,Ton14,Bha16,BHL17,HR19}. Moreover, geometric properties of~$f$, such as being affine or projective, can be formulated in terms of $F$ \cite{Bra14,Sch18}.

It should also be expected, and this is what one actually needs in order to obtain the aforementioned results, that a universal property of a scheme $X$, say over some commutative ring $\IK$, corresponds to a bicategorical universal property of $\Q(X)$ within the $2$-category of all cocomplete $\IK$-linear tensor categories, not just those of the form $\Q(Y)$; we refer to \cite{Ben67,KR74} for basic bicategorical concepts. This has been achieved for plenty of examples in the author's PhD thesis \cite{Bra14}. Perhaps the most elementary example is the observation that $\Spec(\IK)$ is the final $\IK$-scheme, and $\Q(\Spec(\IK)) \simeq \Mod(\IK)$ is, in fact, the initial cocomplete $\IK$-linear tensor category in the bicategorical sense. A more involved example is the observation that if $X$ is a projective $\IK$-scheme, then $\Q(X)$ satisfies a similar universal property as $X$, namely it classifies invertible objects with suitable global generators satisfying some prescribed relations \cite[Section 5.10]{Bra14}.

In this paper we are interested in the question whether $X \mapsto \Q(X)$ preserves fiber products. That is, if $X,Y$ are $S$-schemes, we have a square of cocontinuous tensor functors
\[\begin{tikzcd}[row sep=6ex, column sep=4ex]
\Q(S) \ar{d} \ar{r} & \Q(X)  \ar{d} \\ \Q(Y) \ar{r} & \Q(X \times_S Y)
\end{tikzcd}\]
which commutes up to a specified isomorphism, and our goal is to prove that this is actually a bicategorical pushout square if $X,Y,S$ are quasi-compact quasi-separated. This is a basic result which is necessary for the translation of those geometric notions to tensor category theory which involve fiber products or base changes, such as separateness, properness and algebraic correspondences.

The pushout property has already been proven by the author if $X$ is quasi-projective over~$S$ \cite{Bra14}, using the  universal property of $\Q(X)$ mentioned before. It has also been proven by Sch\"appi if $X,Y,S$ are quasi-compact semi-separated algebraic stacks with the resolution property \cite{Sch18} (a special case appeared earlier in \cite{Sch14}), using a generalization of Tannaka duality. Moreover, Ben-Zvi, Francis and Nadler have proven such a result for perfect stacks in the $\infty$-categorical setting \cite{BFN10}, which has subsequently been generalized by Lurie \cite{Lur18}.

Our main result states the following.

\begin{thmintro}\label{intro-1}
Let $\IK$ be a commutative ring and let $X$ and $Y$ be two quasi-compact quasi-separated $\IK$-schemes over some quasi-compact quasi-separated $\IK$-scheme $S$. Then the square above exhibits $\Q(X \times_S Y)$ as a bicategorical pushout of $\Q(X)$ and $\Q(Y)$ over $\Q(S)$ in the $2$-category of cocomplete $\IK$-linear tensor categories and cocontinuous $\IK$-linear tensor functors. In particular, $\Q(X \times_\IK Y)$ is a bicategorical coproduct of $\Q(X)$ and $\Q(Y)$.
\end{thmintro}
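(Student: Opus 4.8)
The plan is to reduce the theorem to the affine case and then to an explicit computation with quasi-coherent modules, using the notion of idals and the theory of localizations developed in the body of the paper. Concretely, I would proceed as follows.

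First I would set up the affine case. Suppose $X = \Spec(A)$, $Y = \Spec(B)$ and $S = \Spec(R)$ are affine $\IK$-schemes, so $\Q(X) \simeq \Mod(A)$ etc., and $X \times_S Y = \Spec(A \otimes_R B)$. Here the claim becomes: $\Mod(A \otimes_R B)$ is the bicategorical pushout of $\Mod(A)$ and $\Mod(B)$ over $\Mod(R)$ in cocomplete $\IK$-linear tensor categories. Given a cocomplete $\IK$-linear tensor category $\C$ together with cocontinuous tensor functors $F : \Mod(A) \to \C$, $G : \Mod(B) \to \C$ and a tensor isomorphism between the two composites $\Mod(R) \to \C$, one uses that a cocontinuous tensor functor out of $\Mod(A)$ is the same data as a commutative $\IK$-algebra map $A \to \End_\C(\mathbf 1)$ (more precisely, $A$-module objects in $\C$, i.e.\ a lift of $\mathbf 1$ to a commutative $A$-algebra in $\C$); this is the affine Tannaka-type statement $\Hom_{c\otimes}(\Mod(A), \C) \simeq \CAlg_A(\C)$. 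Then $F$ and $G$ correspond to commutative $A$- and $B$-algebra structures on $\mathbf 1_\C$ which restrict to the \emph{same} $R$-algebra structure via the specified isomorphism, and these glue uniquely to a commutative $(A \otimes_R B)$-algebra structure on $\mathbf 1_\C$, i.e.\ to a cocontinuous tensor functor $\Mod(A \otimes_R B) \to \C$, essentially uniquely and compatibly. One must also check this assignment is an equivalence on $2$-morphisms; this is a routine unwinding since tensor natural transformations out of $\Mod(A)$ correspond to automorphisms of the relevant algebra object.

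Second, I would globalize. For general quasi-compact quasi-separated $X$, $Y$, $S$ one cannot simply glue affine charts naively, because the pushout of tensor categories does not obviously commute with the relevant limits; this is exactly where the paper's machinery of \emph{idals} and localizations of cocomplete tensor categories comes in. The key point is that $\Q(X)$ is recovered from an affine cover $\{U_i\}$ of $X$ by a \emph{localization}/descent procedure, and idals provide the intrinsic, non-embedded description of the ideal sheaves cutting out the $U_i$, so that the gluing is expressed purely $2$-categorically. I would first treat the case $S$ affine and $X$, $Y$ affine, then bootstrap: (i) fix $S$, $Y$ affine and let $X$ be quasi-compact quasi-separated, writing $\Q(X)$ as an iterated localization built from a finite affine cover and the (quasi-separated!) intersections, and use that localizations in the relevant $2$-category commute with the pushout against the fixed $\Q(Y)$ over $\Q(S)$ because $-\,\otimes_{\Q(S)} \Q(Y)$ (base change of cocomplete tensor categories) is itself cocontinuous and preserves the idal-localizations in question; (ii) then drop affineness of $Y$ by symmetry; (iii) finally drop affineness of $S$ by a similar cover argument on $S$, using that $X \times_S Y$ base-changes correctly along an affine cover of $S$. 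At each stage the geometric fiber-product identity $U_i \times_S V_j = $ (appropriate intersection-type fiber product) has to be matched with the corresponding categorical localization identity.

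The main obstacle I expect is precisely step two: showing that the formation of the bicategorical pushout $\Q(X) \otimes_{\Q(S)} \Q(Y)$ commutes with the localizations/descent presentations of $\Q(X)$ and $\Q(Y)$ — i.e.\ that base change of cocomplete tensor categories is exact enough to preserve the idal-localizations and the (homotopy) limits gluing the affine pieces. This requires the quasi-compactness and quasi-separatedness hypotheses in an essential way (finiteness of the covers, and that all intersections are again quasi-compact quasi-separated so the induction is well-founded), and it is where the theory of idals is needed to keep everything intrinsic rather than relying on embeddings into ambient module categories. The affine base case, by contrast, should be a fairly direct consequence of the affine universal property of $\Mod(A)$ recalled above, and the final coproduct statement is just the special case $S = \Spec(\IK)$.
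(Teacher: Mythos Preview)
Your proposal is correct and matches the paper's second proof of the fiber-product statement (together with its proof of the product case): the affine base case via $\Hom_{\c\otimes/\IK}(\Mod(A),\C)\simeq\Hom_{\CAlg_\IK}(A,\End_\C(\O_\C))$, followed by induction on finitely presentable idal covers, using that idal-localization commutes with the bicategorical pushout (immediate from the universal properties, as you say) and the gluing theorem $\C\simeq\C_I\times_{\C_{I\otimes J}}\C_J$ to reassemble. Your identification of the ``main obstacle'' is exactly what that gluing theorem resolves.

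The paper also offers a first proof, a shortcut you do not mention, which bypasses your step (iii) entirely. Once the \emph{product} case $S=\Spec(\IK)$ is established, the general fiber product is handled by the cartesian square
\[
\begin{tikzcd}
X\times_S Y \ar{r}\ar{d} & X\times_\IK Y \ar{d}\\
S \ar{r}{\Delta_S} & S\times_\IK S
\end{tikzcd}
\]
so it suffices to show that \emph{immersions} satisfy tensorial base change. For open immersions this is immediate from the universal property of the idal-localization $\C\to\C_J$, and for closed immersions from the analogous quotient $\C\to\C/I$; no induction on open covers of $S$ is needed. This buys a cleaner reduction of the relative case to the absolute one, whereas your route (and the paper's second proof) treats $S$ on the same footing as $X$ and $Y$ and is perhaps more uniform.
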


The last statement can also be written as
\[\Q(X \times_\IK Y) \simeq \Q(X) \coboxtimes_{\IK} \Q(Y),\]
where $\smash{\coboxtimes_{\IK}}$ denotes the tensor product of locally presentable $\IK$-linear categories (cf.\ \cite[Corollary 2.2.5]{CJF13}). It is also equivalent to
\[\Q_{\fp}(X \times_\IK Y) \simeq \Q_{\fp}(X) \boxtimes_{\IK} \Q_{\fp}(Y),\]
where $\boxtimes_{\IK}$ denotes Kelly's tensor product of essentially small finitely cocomplete $\IK$-linear categories (see \cite[Section 6.5]{Kel05} or \cite[Theorem 7]{LF13}). Roughy, it means that quasi-coherent $\O_{X \times_{\IK} Y}$-modules are freely generated under colimits and tensor products by the pullbacks of quasi-coherent $\O_X$-modules and quasi-coherent $\O_Y$-modules. We also prove a generalization of this theorem where $\Q(Y)$ is replaced by a suitable tensor category $\C$ and $\Q(X \times_{\IK} Y)$ by a certain tensor category $\Q_{\C}(X)$ of \emph{quasi-coherent $\O_X$-modules internal to $\C$} (\cref{genprodthm}). 
  
Since a scheme is built up out of affine pieces, it is tempting to reduce this theorem to the affine case. For this we will have to find well-behaved tensor categorical analogs of open subschemes. The first idea might be to use ideals and localize at them. In fact, if $I \subseteq \O_X$ is a quasi-coherent ideal, then $X_I \coloneqq  \{x \in X : I_x = \O_{X,x}\}$ is an open subscheme of $X$, and every open subscheme of $X$ has this form. Thus, when $\C$ is a suitable tensor category with unit object $\O_\C$ and $I \subseteq \O_\C$ is an ideal, i.e.\ a subobject, we might define a localization $\C_I$ of~$\C$ at $I$. But if $F : \C \to \D$ is a cocontinuous tensor functor, then $F$ does not have to preserve ideals. This makes ideals unsuitable for our theory.

We solve this problem by working with \emph{idals} instead, which may be seen as non-\underline{e}mbedded id\underline{e}als (not to be confused with ideles from algebraic number theory). An idal in $\C$ is just a morphism $e : I \to \O_\C$ satisfying the equation $e \otimes I = I \otimes e$ in $\Hom(I \otimes I,I)$. Thus, idals are preserved by any tensor functor whatsoever. In the case $\C=\Q(X)$ every idal $I \to \O_X$ induces an open subscheme $X_I \coloneqq \{x \in X : I_x \to \O_{X,x} \text{ is an isomorphism}\}$ of $X$.

\begin{thmintro}\label{intro-2} 
Let $\C$ be a locally presentable tensor category and let $I \to \O_\C$ be an idal in $\C$. Then there is a localization $\C_I$ of $\C$ at $I \to \O_\C$ in the $2$-category of cocomplete tensor categories. The underlying category of $\C_I$ is the full reflective subcategory of $\C$ containing those $M \in \C$ for which $M \myiso \HOM(\O_\C,M) \to \HOM(I,M)$ is an isomorphism. Moreover, if $\C=\Q(X)$ for some quasi-compact quasi-separated scheme $X$ and $I$ is of finite presentation, then we have
\[\Q(X)_I \simeq \Q(X_I).\]
\end{thmintro}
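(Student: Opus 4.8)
\emph{Step 1: construction of $\C_I$ and its universal property.} I would begin by noting that, writing $c_M\colon M\to\HOM(I,M)$ for the canonical map, an object $M$ satisfies $M\myiso\HOM(\O_\C,M)\to\HOM(I,M)$ precisely when $M$ is orthogonal to $e\otimes\id_G$ for every $G\in\C$; since $\C$ is locally presentable, this is the same as orthogonality with respect to the \emph{set} $\{e\otimes\id_{G_i}\}$ for a generating set $\{G_i\}$, so the full subcategory $\C_I$ of these ``$I$-local'' objects is reflective. Let $L\colon\C\to\C_I$ be the reflector. The idal equation $e\otimes I=I\otimes e$ is exactly what forces $\C_I$ to be an exponential ideal: for $I$-local $P$ and any $c$ one has canonical isomorphisms $\HOM(I,\HOM(c,P))\cong\HOM(I\otimes c,P)\cong\HOM(c\otimes I,P)\cong\HOM(c,\HOM(I,P))\cong\HOM(c,P)$, so $\HOM(c,P)$ is again $I$-local. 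Day's reflection theorem then equips $\C_I$ with a cocomplete closed symmetric monoidal structure, $M\otimes_{\C_I}N:=L(M\otimes N)$ with unit $L(\O_\C)$, for which $L$ is a cocontinuous tensor functor, and $e$ becomes invertible under $L$ since $\HOM(e,P)=c_P$ is an isomorphism for all $I$-local $P$. For the universal property, given a cocontinuous tensor functor $F\colon\C\to\D$ with $F(e)$ an isomorphism, let $F^R$ be its right adjoint; then for all $N\in\D$ and $T\in\C$
\[\Hom_\C(T,\HOM(I,F^R N))\cong\Hom_\D(F(T)\otimes F(I),N)\cong\Hom_\D(F(T),N)\cong\Hom_\C(T,F^R N),\]
the isomorphism being $c_{F^R N}$; hence $F^R N$ is $I$-local, so $F$ inverts every morphism inverted by $L$ and therefore factors uniquely (up to unique isomorphism) through $L$, the factoring functor being cocontinuous and tensor because colimits and tensor products in $\C_I$ are computed by applying $L$ in $\C$. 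This exhibits $(\C_I,L)$ as the localization of $\C$ at $e$.

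\emph{Step 2: comparison with $\Q(X_I)$.} Now take $\C=\Q(X)$ with $X$ quasi-compact quasi-separated and $I\to\O_X$ of finite presentation. The idal equation forces $\operatorname{Supp}(\ker e)\subseteq\operatorname{Supp}(\coker e)$ — at a point where $e$ is surjective, picking $x_0$ with $e(x_0)=1$ and using $e(x_0)y=e(y)x_0$ kills $\ker e$ there — so $X_I=X\setminus\operatorname{Supp}(\coker e)=X\setminus V(\mathrm{Fitt}_0(\coker e))$ is a quasi-compact open and $j\colon X_I\hookrightarrow X$ is a quasi-compact open immersion. Since $j^*e$ is an isomorphism, the universal property gives a cocontinuous tensor functor $G\colon\Q(X)_I\to\Q(X_I)$ with $G\circ L=j^*$. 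Conversely, because $j^*I\cong\O_{X_I}$ one has $\Hom(T\otimes I,j_*N)\cong\Hom(j^*T\otimes j^*I,N)\cong\Hom(j^*T,N)\cong\Hom(T,j_*N)$ naturally in $T$, i.e.\ $\HOM(I,j_*N)\cong j_*N$ via $c_{j_*N}$, so $j_*N$ is $I$-local and $j_*$ corestricts to $H\colon\Q(X_I)\to\Q(X)_I$. The adjunction $j^*\dashv j_*$ descends to $G\dashv H$, and the counit $j^*j_*\Rightarrow\id$ is an isomorphism, so $GH\cong\id$; in particular $G$ is essentially surjective.

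\emph{Step 3: the unit $\eta_M\colon M\to j_*j^*M$ is an isomorphism for $I$-local $M$.} This is the point of the proof, and it makes $G$ fully faithful, hence an equivalence. The crucial lemma is that an object of $\Q(X)$ which is both $I$-local and supported on $Z:=X\setminus X_I$ must vanish: locally, $I$-locality gives $\ker(c_M)=M[\mathfrak c]=0$ for the finitely generated ideal $\mathfrak c:=\im(e)$, while being supported on $Z=V(\mathfrak c)$ gives $M=M[\mathfrak c^\infty]$, and $M[\mathfrak c]=0$ implies $M[\mathfrak c^n]=0$ for all $n$. Given $I$-local $M$, the morphism $j^*\eta_M$ is an isomorphism, so $\ker\eta_M$ and $\coker\eta_M$ are supported on $Z$; applying the left-exact functor $\HOM(I,-)$ to $0\to\ker\eta_M\to M\xrightarrow{\eta_M}j_*j^*M$ and using that $M$ and $j_*j^*M$ are $I$-local identifies $\HOM(I,\ker\eta_M)$ with $\ker\eta_M$ via $c_{\ker\eta_M}$, so $\ker\eta_M$ is $I$-local and hence zero; a short diagram chase with the same two $I$-local objects shows $c_{\coker\eta_M}$ is a monomorphism, so $\coker\eta_M$ is $I$-local and hence zero. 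Thus $\eta_M$ is an isomorphism and $\Q(X)_I\simeq\Q(X_I)$.

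\textbf{Main obstacle.} The categorical steps are routine once one invokes Day's reflection theorem and the adjoint functor theorem; the real work is Step 3. Finiteness of $I$ is needed to make $X_I$ a retrocompact open with a well-behaved $j_*$, and the idal equation is needed twice over — to pin $X_I$ down as the non-vanishing locus of the ideal $\im(e)$, and to force simultaneously $I$-local and $Z$-supported modules to vanish; without either hypothesis the argument (and indeed the statement) breaks down. A secondary point worth isolating as a lemma is the identification $\HOM(I,j_*N)\cong j_*N$ together with the behaviour of $\HOM(I,-)$ on the short exact sequences straddling $\eta_M$, which is what transports $I$-locality to kernels and cokernels.
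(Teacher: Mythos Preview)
Your argument is correct, and for the geometric comparison $\Q(X)_I \simeq \Q(X_I)$ it follows a genuinely different route from the paper. The paper first establishes a variant of Deligne's formula, $j_*j^*M \cong \varinjlim_n \HOM(I^{\otimes n}, M)$ (\cref{deligne-sheaf}), and then observes that this colimit is attained already at $n=0$ precisely when $c_M$ is an isomorphism (\cref{deligne-cat}); the equivalence then drops out of the $j^* \dashv j_*$ adjunction. You bypass Deligne's formula entirely and instead kill $\ker\eta_M$ and $\coker\eta_M$ directly via the torsion lemma ``no $\mathfrak c$-torsion plus support in $V(\mathfrak c)$ forces vanishing''. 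Your route is shorter and more self-contained for this particular statement; the paper's has the advantage that Deligne's formula is reused several times --- it gives the explicit reflector $R_I \cong \varinjlim_n \HOM(I^{\otimes n},-)$ in the locally finitely presentable case (\cref{locex}), and it underlies the proof that finitely presentable idals tensor-generate $\Q(X)$ (\cref{idalgenerate}).

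Two small corrections. In Step~1 the exponential-ideal property holds for \emph{any} morphism $e:I\to\O_\C$ in a symmetric monoidal category: the symmetry $I\otimes c \cong c\otimes I$ already identifies $c_{\HOM(c,P)}$ with $\HOM(c,c_P)$, so the idal equation is not doing the work there (the paper agrees, cf.\ \cref{fcapproach}); the idal hypothesis earns its keep in your Step~2 and, in the paper's approach, in making the transition maps of $\varinjlim_n \HOM(I^{\otimes n},-)$ coherent. In Step~3 you write that $c_{\coker\eta_M}$ being a monomorphism makes $\coker\eta_M$ ``$I$-local''; strictly you only get $(\coker\eta_M)[\mathfrak c]=0$, but since your vanishing lemma uses nothing more than this, the conclusion stands.
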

 
The latter statement about schemes uses a new variant of Deligne's formula \cite[Appendix, Proposition 4]{Har66} which also works for non-noetherian schemes (basically since we work with tensor powers instead of ideal powers).

\begin{propintro}\label{intro-3}
Let $X$ be a quasi-compact quasi-separated scheme and let $J \to \O_X$ be a quasi-coherent idal of finite presentation. If $j : X_J \to X$ denotes the open immersion, then for any $M \in \Q(X)$ we have a natural isomorphism
\[{\varinjlim}_n \HOM(J^{\otimes n},M) \myiso j_* j^* M.\]
In particular, we have a natural isomorphism
\[{\varinjlim}_n \Hom(J^{\otimes n},M) \myiso \Gamma(X_J,M).\]
\end{propintro}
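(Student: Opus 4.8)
The plan is to reduce to the affine case by a standard \v{C}ech/induction argument on the number of affines needed to cover $X$, and then to prove the affine statement by a direct module-theoretic computation. First I would verify the affine case: suppose $X = \Spec(A)$, so that $J$ corresponds to a finitely presented ideal $(a_1,\dots,a_r) \subseteq A$ (using that $J$ is an \emph{idal}, which on an affine scheme is automatically an honest ideal since $\Hom(J\otimes J, J) = \Hom(J\otimes_A J, J)$ detects the commutativity condition vacuously here — or more precisely, the image of $J \to \O_X$ is an ideal of finite type and one checks $X_J$ equals the corresponding open). Then $X_J = \bigcup_i D(a_i)$, and $j_* j^* M$ corresponds to the $A$-module $\Gamma(X_J, \widetilde{M}) = \ker\bigl(\prod_i M[a_i^{-1}] \rightrightarrows \prod_{i,k} M[(a_ia_k)^{-1}]\bigr)$. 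On the other side, $\HOM(J^{\otimes n}, M)$ corresponds to $\Hom_A(J^{\otimes_A n}, M)$, and the transition maps are induced by $e^{\otimes n} : J^{\otimes(n+1)} \to J^{\otimes n}$. The key computation is that $\varinjlim_n \Hom_A(J^{\otimes_A n}, M) \cong \Gamma(X_J, \widetilde M)$; this is exactly the non-noetherian variant of Deligne's formula alluded to in the excerpt, and the point is that $J^{\otimes_A n}$ surjects onto $J^n$ and the kernels become irrelevant in the colimit (one can identify the colimit with $\varinjlim_n \Hom_A(J^n, M)$ by a cofinality argument, or argue directly with the Koszul-type generators $a_i^n$).

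Next, for general quasi-compact quasi-separated $X$, I would choose a finite affine cover $X = U_1 \cup \dots \cup U_m$ and proceed by induction on $m$, the case $m=1$ being the affine statement just established. Both sides of the claimed isomorphism are quasi-coherent sheaves on $X$ (the left side because filtered colimits of quasi-coherent modules are quasi-coherent, and $\HOM(J^{\otimes n}, M)$ is quasi-coherent when $J^{\otimes n}$ is of finite presentation), and the formation of both sides commutes with restriction to quasi-compact opens: for $j_*j^*M$ this is a standard base-change statement for the cartesian square relating $X_J \cap V \to V$ and $X_J \to X$ (valid because $j$ is quasi-compact and $X$ is quasi-separated), and for $\varinjlim_n \HOM(J^{\otimes n}, M)$ it is formal since $\HOM$ of finitely presented objects and filtered colimits both localize. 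Hence it suffices to construct a natural map $\varphi : \varinjlim_n \HOM(J^{\otimes n}, M) \to j_*j^*M$ and check it is an isomorphism locally; the map $\varphi$ comes from the unit $M \to j_*j^*M$ together with the observation that $j^* J$ is the unit object of $\Q(X_J)$ (since $I \to \O_X$ becomes an isomorphism after restricting to $X_J$ by definition of $X_J$), so $j^*(\HOM(J^{\otimes n}, M)) \cong j^* M$ compatibly in $n$, and adjunction gives $\HOM(J^{\otimes n}, M) \to j_* j^* M$. Checking that $\varphi$ is an isomorphism is then local, reducing to the affine case or to the Mayer–Vietoris square for $U_1 \cup \dots \cup U_{m-1}$ and $U_m$, where both sides yield the same pullback diagram by the commutation-with-restriction established above and the inductive hypothesis.

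The second assertion — that $\varinjlim_n \Hom(J^{\otimes n}, M) \cong \Gamma(X_J, M)$ — follows immediately from the first by applying the global sections functor $\Gamma(X, -)$, using that $\Gamma(X, -)$ commutes with filtered colimits on $\Q(X)$ (again because $X$ is quasi-compact and quasi-separated, so $\O_X$ is a compact-like generator in the relevant sense, or simply because cohomology commutes with filtered colimits on such $X$) and that $\Hom_{\Q(X)}(J^{\otimes n}, M) = \Gamma(X, \HOM(J^{\otimes n}, M))$, together with $\Gamma(X, j_*j^*M) = \Gamma(X_J, M)$.

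I expect the main obstacle to be the affine computation: precisely identifying $\varinjlim_n \Hom_A(J^{\otimes_A n}, M)$ with $\Gamma(X_J, \widetilde M)$ without noetherian hypotheses, i.e.\ controlling the tensor powers $J^{\otimes_A n}$ (which need not be flat or finitely presented as $A$-modules even when $J$ is finitely presented as a module — though here $J$ is a \emph{finitely presented ideal}, so $J^{\otimes_A n}$ is at least finitely presented). The subtlety is that $J^{\otimes_A n} \to J^n$ is not injective in general, so one cannot literally invoke the classical Deligne formula; the resolution is that in the colimit over $n$ these discrepancies wash out, which one sees by writing down explicit elements (products of the generators $a_i$) and checking the colimit of the $\Hom$-groups is computed by a filtered system cofinal in the one governing $\Gamma(X_J, \widetilde M)$. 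Everything else is a routine gluing argument once this local input and its compatibility with localization are in hand.
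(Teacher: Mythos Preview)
Your overall architecture---reduce to the affine case and then glue via the fact that both sides are quasi-coherent and localize---is exactly what the paper does (Proposition~\ref{deligne} and Corollary~\ref{deligne-sheaf}). The gluing paragraph is fine and essentially matches the paper's deduction of the sheaf statement from the affine $\Hom$ statement.

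There are two problems in the affine step. First, a small confusion: an idal on $\Spec(A)$ is \emph{not} automatically an ideal; $J$ is a finitely presented $A$-module equipped with a map $e:J\to A$, and only the image $I=e(J)$ is an ideal (of finite type, not necessarily of finite presentation). So you must genuinely work with the tensor powers $J^{\otimes_A n}$, not with $I^n$.

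Second, and this is the real gap: your proposed affine argument is circular. You suggest identifying $\varinjlim_n\Hom_A(J^{\otimes n},M)$ with $\varinjlim_n\Hom_A(I^n,M)$ ``by cofinality'' and then invoking the classical Deligne formula. But the classical formula for ideal powers is precisely what requires Artin--Rees, hence noetherian hypotheses; and the cofinality claim itself (that every $\phi:J^{\otimes n}\to M$ becomes, after increasing $n$, a map factoring through some $I^m$) is equivalent to showing the kernels of $J^{\otimes n}\twoheadrightarrow I^n$ eventually die under the transition maps, which you have not justified and which is not obvious without noetherian input. Your fallback ``argue directly with Koszul-type generators'' is the right instinct but is left as a slogan.

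The paper avoids this entirely by proving the affine isomorphism $\varinjlim_n\Hom(J^{\otimes n}\otimes M,N)\cong\Hom(M|_U,N|_U)$ directly, never comparing with ideal powers. Injectivity uses only that a finite-type module vanishing on $U$ is killed by $I^p$ for some $p$. Surjectivity is the interesting part: one picks generators $s_1,\dots,s_\ell$ of $J$, lifts each $e(s_i)^h\cdot n_j$ to a global section for large $h$, and then takes $d=\ell(h-1)+1$ so that by the pigeonhole principle every monomial generator of $J^{\otimes d}$ contains some $s_i$ at least $h$ times. This is exactly the freedom that tensor powers buy over ideal powers: one can define a map out of a free module hitting $J^{\otimes d}\otimes M$ without needing Artin--Rees to control relations.
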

 
Incidentally this formula can also be used to deduce that the finitely presentable idals generate $\Q(X)$ as a cocomplete tensor category (\cref{idalgenerate}).
 
Let us say that two idals $I \to \O_\C \leftarrow J$ in a tensor category $\C$ form a \emph{cover} if the square
\[\begin{tikzcd}[row sep=6ex, column sep=6ex] I \otimes J \ar{r}{e \otimes J}  \ar{d}[swap]{I \otimes f} & J \ar{d}{f} \\ I \ar{r}[swap]{e} & \O_\C \end{tikzcd}\]
is a pushout. For example, if $\C=\Q(X)$, this corresponds to an open cover $X = X_I \cup X_J$ in the usual sense, and we know that quasi-coherent modules may be glued, i.e.\ that the restriction functors induce an equivalence
\[\Q(X) \myiso \Q(X_I) \times_{\Q(X_I \cap X_J)} \Q(X_J).\]
The latter is a bicategorical pullback. Such a gluing result is also true for well-behaved tensor categories.
 
\begin{thmintro}\label{intro-4}
Let $\C$ be a locally finitely presentable tensor category. Let $I \to \O_\C \leftarrow J$ be two finitely presentable idals which form a cover. Then there is a natural equivalence
\[\C \myiso \C_I \times_{\C_{I \otimes J}} \C_J.\]
\end{thmintro}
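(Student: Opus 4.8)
The plan is to exhibit the functor $\C \to \C_I \times_{\C_{I \otimes J}} \C_J$ as an equivalence by constructing an explicit inverse via gluing, following the geometric picture in mind but carried out purely tensor-categorically. First I would record the basic compatibilities of the localization construction of \cref{intro-2}: write $L_I, L_J, L_{I \otimes J}$ for the reflectors (right adjoints to the fully faithful inclusions, realized as $M \mapsto \varinjlim_n \HOM(I^{\otimes n}, M)$ and so on, once one checks that the full subcategory in \cref{intro-2} coincides with this colimit description — this is the tensor-categorical shadow of \cref{intro-3}). Since $I^{\otimes n} \otimes J^{\otimes n}$ is cofinal among the ``$ (I\otimes J)^{\otimes m}$'', one gets $L_{I\otimes J} \simeq L_I L_J \simeq L_J L_I$ and a commuting square of localizations, exactly as with the four open sets $X_I, X_J, X_I\cap X_J$ and $X$. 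These identities give the functor $\C \to \C_I \times_{\C_{I\otimes J}} \C_J$, $M \mapsto (L_I M, L_J M, \text{canonical iso})$, and make it visibly a map of cocomplete tensor categories.

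Next I would build the candidate inverse. An object of the bicategorical pullback is a triple $(M_I, M_J, \varphi)$ with $M_I \in \C_I$, $M_J \in \C_J$ and $\varphi : L_{I\otimes J} M_I \myiso L_{I\otimes J} M_J$ in $\C_{I\otimes J}$; send it to the equalizer (or pullback) $M := M_I \times_{L_{I\otimes J}M_J} M_J$ computed in $\C$, using $\varphi$ to identify the two maps into the common target. The cover hypothesis — that $I \otimes J \to J$, $I \otimes J \to I$ is a pushout under $I\otimes J$ — is what forces this construction to be inverse to $M \mapsto (L_I M, L_J M)$: applying $\HOM(-,M)$ to the pushout square turns it into the pullback square
\[\begin{tikzcd}[row sep=5ex, column sep=5ex]
\HOM(\O_\C, M) \ar{r} \ar{d} & \HOM(I, M) \ar{d} \\ \HOM(J,M) \ar{r} & \HOM(I\otimes J, M),
\end{tikzcd}\]
and then passing to the colimit over tensor powers of $I$ and $J$ (which commutes with finite limits since everything is finitely presentable, and the colimits are filtered) yields $M \myiso L_I M \times_{L_{I\otimes J}M} L_J M$ for all $M \in \C$. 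Conversely one checks the triple reconstructed from a glued object returns the original triple, using that $L_I$ kills the ``$J$-part'' appropriately, i.e.\ that $L_I$ applied to the gluing pullback gives back $M_I$ — this again reduces to the pushout/pullback duality plus the cofinality identities among tensor powers.

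The main obstacle I expect is verifying that the inverse functor is genuinely a cocontinuous tensor functor and that the equivalence is one of \emph{tensor} categories, not merely of underlying categories. Monoidality of the gluing functor requires knowing that the pullback $M_I \times_{L_{I\otimes J}M_J} M_J$ interacts well with $\otimes$, which in turn needs the tensor product on $\C$ to be suitably exact relative to these particular pullbacks; the cleanest route is probably to observe that both composites ($\C \to \text{pullback} \to \C$ and back) are computed by the same colimit-of-internal-Hom formulas, so that the natural isomorphisms witnessing monoidality on one side transport to the other. A secondary technical point is handling the bicategorical (as opposed to strict) nature of the $2$-pullback: one must track the coherence isomorphism $\varphi$ through all constructions and check the relevant pentagon/triangle-type diagrams commute, but this is bookkeeping rather than a real difficulty. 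Finite presentability of $I$ and $J$ is used in two essential places: to ensure $\HOM(I^{\otimes n},-)$ and $\HOM(J^{\otimes n},-)$ are cocontinuous (so the localizations land in cocomplete categories and the reflectors are computed by filtered colimits), and to ensure those filtered colimits commute with the finite limits appearing in the gluing construction.
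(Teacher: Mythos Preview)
Your proposal is essentially the paper's own argument (\cref{global-glue}): forward functor $M \mapsto (R_I M, R_J M, \tau_M)$, inverse by gluing via the pullback $A \times_\tau B$ in $\C$, and the identification $M \cong R_I M \times_{R_{I\otimes J} M} R_J M$ obtained by applying $\HOM(-,M)$ to the cover pushout and passing to the filtered colimit.

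Two remarks. First, your ``main obstacle'' is not one: the forward functor $F:\C\to\C_I\times_{\C_{I\otimes J}}\C_J$ is already a cocontinuous tensor functor, so once you show the \emph{underlying} functor is an equivalence of categories you are done --- any quasi-inverse of a tensor functor is automatically a tensor functor. The paper makes exactly this reduction and never checks monoidality of $G$ directly. Second, there is a small step you glossed over: to get the pullback square at level $n$ you need that $(I^{\otimes n},J^{\otimes n})$ is again an idal cover, not just $(I,J)$. This is \cref{covpot} in the paper (proved via the pasting lemma for pushouts, \cref{covprod}); without it the ``pass to the colimit over tensor powers'' step does not go through, since the single $n=1$ pullback square is not enough to identify the filtered colimit.
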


\cref{intro-2,intro-4} are the main ingredients of the proof of \cref{intro-1}. They also allow us to find, for every quasi-compact quasi-separated $\IK$-scheme $X$, a bicategorical universal property of $\Q(X)$ within the $2$-category of locally finitely presentable $\IK$-linear tensor categories with cocontinuous $\IK$-linear tensor functors preserving finitely presentable objects (see \cref{sec:localtens}). We consider for example the affine line (resp.\ plane) with a double origin and the projective line in \cref{doubleorigin,projline,doubleorigin2}.

The paper is organized as follows. In \cref{sec:idals} we introduce idals and make some basic observations about them. In \cref{sec:open} we study idal covers and open subschemes associated to idals in $\Q(X)$. In \cref{sec:deligne} we prove \cref{intro-3}, the variant of Deligne's formula. In \cref{sec:tensorloc} we study the localization $\C_I$ of a cocomplete tensor category at an idal $I$ and prove \cref{intro-2,intro-4}. In \cref{sec:fiber} we combine all this to prove \cref{intro-1}. In \cref{sec:localtens} we give a local description of cocontinuous tensor functors with respect to an idal cover and apply this to describe cocontinuous tensor functors on $\Q(X)$, which yields another proof of \cref{intro-1}.\\

\noindent\textbf{Ackowledgements.} I would like to thank David Rydh for making many detailed comments which improved the paper.

\tableofcontents
\section{Idals}
\label{sec:idals}

\begin{conv}
For us, a \emph{tensor category} is a symmetric monoidal category, and a \emph{tensor functor} is a strong symmetric monoidal functor \cite[Chapter XI]{ML98}. The unit object of a tensor category $\C$ will be denoted by $\O_\C$. This notation is motivated by algebraic geometry.
\end{conv}

\begin{recol}
An ideal of a commutative ring $\IK$ is just a monomorphism of $\IK$-modules $I \hookrightarrow \IK$. Similarly, if $X$ is a scheme, a quasi-coherent ideal of $\O_X$ is just a monomorphism of quasi-coherent $\O_X$-modules $I \hookrightarrow \O_X$. More generally, if $\C$ is a tensor category, an ideal of $\O_\C$ can be defined as a monomorphism $I \hookrightarrow \O_\C$ in $\C$.
\end{recol}
 
One drawback of ideals is that they do not pull back nicely. If $F : \C \to \D$ is a tensor functor and $I  \to \O_\C$ is an ideal, then its image $F(I) \to F(\O_\C) \myiso \O_\D$ is not an ideal in general, since~$F$ does not have to preserve monomorphisms. For example, if $f : X \to Y$ is a morphism of well-behaved schemes, then the induced pullback functor $f^* : \Q(Y) \to \Q(X)$ preserves ideals only if $f$ is flat. Back to the general case, if $\D$ has image factorizations, we may consider the image $F(I) \cdot \O_\D$ of $F(I) \to \O_\D$, which is an ideal of $\O_\D$. To some extent, this can be used to do commutative algebra with ideals and prime ideals in tensor categories \cite[Section~4.2]{Bra14}. It can even be used to introduce the blow-up of a locally presentable tensor category along an ideal \cite[Section 5.10.2]{Bra14}. However, this workaround is not adequate for our purposes, and in fact the restriction to monomorphisms is not necessary. As a substitute, we will now introduce idals.

\begin{defi}
Let $\C$ be a tensor category with unit object $\O_\C$. An \emph{idal} in $\C$ is a morphism
\[e : I \to \O_\C\]
such that the diagram
\[\begin{tikzcd}[row sep=6ex, column sep=6ex] I \otimes I \ar{r}{e \otimes I} \ar{d}[swap]{I \otimes e} & \O_\C \otimes I \ar{d}{\sim} \\ I \otimes \O_\C \ar{r}[swap]{\sim} & I \end{tikzcd}\]
commutes. Viewing the coherence isomorphisms $\O_\C \otimes I \myiso I \myosi I \otimes \O_\C$ as identities (making use of the coherence theorem \cite[Theorem XI.1.1]{ML98}), we may write the condition as
\[e \otimes I = I \otimes e : I \otimes I \to I.\]
It is sometimes useful to abuse notation and abbreviate an idal $e : I \to \O_\C$ by $I$.
\end{defi}

Thus, in our main example $\C=\Q(X)$ for some scheme $X$, an idal is a homomorphism $e : I \to \O_X$ of quasi-coherent $\O_X$-modules such that $e(x) \cdot y = e(y) \cdot x$ holds for all local sections $x,y$ of $I$.
 
\begin{rem} \label{idal-prop}
Idals enjoy the following properties:
\begin{enu}
\item Notice that $e \circ (e \otimes I ) = e \otimes e = e \circ (I \otimes e)$ holds for every morphism $e : I \to \O_\C$. In particular, if $e : I \to \O_\C$ is a monomorphism, i.e.\ the inclusion of an ideal, then $e$ will be an idal. In general, an idal is like an id\underline{e}al which is not necessarily \underline{e}mbedded.
\item\label{idalsymtr} We have $e \otimes I = (I \otimes e) \circ S_{I,I}$ for the self-symmetry $S_{I,I} : I \otimes I \to I \otimes I$. In particular, $e \otimes I = I \otimes e$ holds when $S_{I,I}$ is the identity, i.e.\ $I$ is symtrivial in the sense of \cite[Section 4.3]{Bra14}. In particular, every morphism $e : \O_\C \to \O_\C$ is an idal. This provides examples of idals which are no ideals.
\item If $F : \C \to \D$ is any tensor functor and $e : I \to \O_\C$ is an idal in $\C$, then its image $F(e) : F(I) \to F(\O_\C) \myiso \O_\D$ is an idal in $\D$.
\item There is an obvious notion of a morphism of idals in $\C$. In fact, idals form a full subcategory of the slice category $\C / \O_\C$. In contrast to ideals, the category of idals is usually not a~preorder.
\item \label{transit} If $e : I \to \O_\C$ is an idal, then for every $n \geq m$ there is a natural morphism $I^{\otimes n} \to I^{\otimes m}$ given by a tensor product of $n-m$ copies of $e$ at any $n-m$ chosen positions and $m$ copies of $\id_I$. This is well-defined because $e$ is an idal.
\item If $e : I \to \O_\C$ is an idal, then $I$ becomes a commutative non-unital algebra object in $\C$ with respect to the multiplication $e \otimes I = I \otimes e : I \otimes I \to I$, and $e$ becomes a morphism of non-unital algebras. This generalizes the fact that an ideal of an algebra can be seen as a~non-unital algebra.
\end{enu} 
\end{rem}

\begin{rem} \label{universalidal}
\cref{idal-prop}(\ref{transit}) tells us how to construct the universal example of a tensor category with an idal. First, we consider the universal example of a tensor category with an object $I$. Its objects are $I^{\otimes n}$ for $n \geq 0$, the only morphisms are $\Hom(I^{\otimes n},I^{\otimes n}) = \Sigma_n$, where $\Sigma_n$ denotes the symmetric group on $n$ letters. We want to have morphisms $I^{\otimes n} \to I^{\otimes m}$ for $n \geq m$ which coequalize all symmetry automorphisms $X^{\otimes n}$. Thus, we redefine $\Hom(I^{\otimes n},I^{\otimes m}) \coloneqq \Sigma_m$ for $n \geq m$ and $\Hom(I^{\otimes n},I^{\otimes m}) \coloneqq \emptyset$ otherwise.
\end{rem}

\begin{rem}
Apart from ideals, idals are also connected to the following concepts:
\begin{enu}
\item The definition of an idal can be formulated in every monoidal category and can also be dualized. Thus, a \emph{coidal} in a monoidal category is a morphism $a : \O_\C \to I$ with $a \otimes I = I \otimes a$. For the monoidal category of endofunctors of a category, this concept is known as a well-pointed endofunctor \cite[Chapter II]{Kel80} and has been studied a lot. Therefore, idals could be also called \emph{co-well-pointed objects}.
\item If $e : I \to \O_\C$ is an open idempotent, i.e.\ a closed idempotent in $\C^{\op}$ in the sense of \cite[Definition 2.8]{BD14}, then $e$ is an idal by \cite[Lemma 2.10]{BD14}. A similar notion of idempotents has been used in tensor triangular geometry \cite[Section 3]{BF11}.
\end{enu}
\end{rem}

\begin{defi}
If $e : I \to \O_\C$ and $f : J \to \O_\C$ are two idals in a tensor category $\C$, then we may define their \emph{idal product} as the composition
\[e \otimes f : I \otimes J \to \O_\C \otimes \O_\C \myiso \O_\C.\]
The following computation shows that this is indeed an idal.
\begin{align*}
e \otimes f \otimes I \otimes J &= (f \otimes I \otimes J) \circ (e \otimes J \otimes I \otimes J) \\
& = (I \otimes f \otimes J)  \circ (S_{J,I} \otimes J) \circ (e \otimes J \otimes I \otimes J) \\
& = (I \otimes J \otimes f) \circ (e \otimes I \otimes J \otimes J) \circ (I \otimes S_{J,I} \otimes J) \\
& = (I \otimes J \otimes f) \circ (I \otimes e \otimes J \otimes J) \circ (I \otimes S_{J,I} \otimes J) \\
& = (I \otimes J \otimes f) \circ (I \otimes J \otimes e \otimes J) \\
& = I \otimes J \otimes e \otimes f
\end{align*}
\end{defi}

\begin{conv}
A \emph{(finitely) cocomplete tensor category} is a tensor category whose underlying category is (finitely) cocomplete in such a way that $\otimes$ preserves (finite) colimits in each variable. Such tensor categories may be seen as categorified rigs \cite{BD98,CJF13}. A tensor category is called $\IK$-linear if its underlying category is $\IK$-linear and $\otimes$ is $\IK$-linear in each variable.
\end{conv}

\begin{lemma} \label{idal-reflect}
If $\C$ is a finitely cocomplete tensor category, then the category of idals in $\C$ is a~reflective subcategory of $\C / \O_\C$.
\end{lemma}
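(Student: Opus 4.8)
The plan is to construct a left adjoint to the inclusion of the full subcategory of idals into $\C / \O_\C$; that the subcategory is full is already recorded in \cref{idal-prop}, so producing the left adjoint is all that is needed. The reflection of an object $g : M \to \O_\C$ of $\C / \O_\C$ will be built from a \emph{single} coequalizer, which is why finite cocompleteness suffices. Namely, form the coequalizer $q : M \to N$ of the two ``multiplication'' maps $g \otimes M,\ M \otimes g : M \otimes M \to M$ (with the unit isomorphisms suppressed, as in the definition of an idal). Since $g \circ (g \otimes M) = g \otimes g = g \circ (M \otimes g)$ for any morphism $g$ (as noted in \cref{idal-prop}), the map $g$ itself coequalizes this pair, so it factors uniquely as $g = \bar g \circ q$ for some $\bar g : N \to \O_\C$. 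I claim that $(N, \bar g)$, together with the morphism $q$, is the reflection of $(M, g)$.

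First I would check that $\bar g$ is an idal, i.e.\ that $\bar g \otimes N = N \otimes \bar g : N \otimes N \to N$. The key point is that this equation may be tested after precomposition with $q \otimes q : M \otimes M \to N \otimes N$, and $q \otimes q$ is an epimorphism: because $- \otimes -$ preserves colimits in each variable, both $q \otimes N$ and $M \otimes q$ are (regular) epimorphisms, hence so is their composite $q \otimes q$. This is the only place where the colimit-preservation of $\otimes$ is invoked, and only the single coequalizer above is used, so finite cocompleteness is enough. Precomposing either side of the desired equation with $q \otimes q$ and using naturality of the unit isomorphisms rewrites it as $q \circ (g \otimes M)$, respectively $q \circ (M \otimes g)$, and these agree since $q$ coequalizes.

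Next I would establish the universal property. Given an idal $e : I \to \O_\C$ and a morphism $h : M \to I$ over $\O_\C$ (so $e \circ h = g$), naturality of the unit isomorphisms identifies the two composites $h \circ (g \otimes M)$ and $h \circ (M \otimes g) : M \otimes M \to I$ with $(e \otimes I) \circ (h \otimes h)$ and $(I \otimes e) \circ (h \otimes h)$; these are equal precisely because $e$ is an idal, so $h$ coequalizes the defining pair and therefore factors uniquely as $h = \tilde h \circ q$. Since $q$ is epic, $\tilde h$ automatically lies over $\O_\C$ and is unique with this property. Thus precomposition with $q$ yields a natural bijection between morphisms of idals $(N, \bar g) \to (I, e)$ and morphisms $(M, g) \to (I, e)$ in $\C / \O_\C$, which is exactly the adjunction exhibiting the idals as a reflective subcategory.

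I do not anticipate a genuine obstacle here: the argument is essentially bookkeeping once one notices that a single coequalizer does the job. The one place to be careful is the handling of the coherence isomorphisms in the two naturality computations; the structural observation that does the work is that $q \otimes q$ is epic, which simultaneously forces $\bar g$ to be an idal and removes any need for a transfinite (or even countable) iteration of the construction.
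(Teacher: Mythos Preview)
Your proposal is correct and follows essentially the same approach as the paper: both construct the reflection of $g:M\to\O_\C$ as the coequalizer of $g\otimes M$ and $M\otimes g$, verify the idal property by precomposing with the epimorphism $q\otimes q$, and invoke the coequalizer's universal property. You simply spell out the universal-property verification that the paper leaves to the reader.
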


\begin{proof}
Let $f : A \to \O_\C$ be any morphism. We define $\pi : A \to I$ as the coequalizer of the pair of morphisms $f \otimes A, A \otimes f : A \otimes A \rightrightarrows A$. Since they are coequalized by $f$, there is a morphism $e : I \to \O_\C$ with $e \circ \pi = f$, i.e.\ $\pi$ is a morphism $f \to e$ in $\C/ \O_\C$. Then $e$ is an ideal since
\[(e \otimes I) \circ (\pi \otimes \pi) = f \otimes \pi = \pi \circ (f \otimes A) = \pi \circ (A \otimes f) = \pi \otimes f =  (I \otimes e) \circ (\pi \otimes \pi)\]
and $\pi \otimes \pi$ is an epimorphism. The universal property of $\pi$ is easily verified.
\end{proof}

\begin{rem}\label{idal-coproduct}
If $\C$ is a (finitely) cocomplete tensor category, then the category of idals in $\C$ is (finitely) cocomplete by \cref{idal-reflect}, since $\C / \O_\C$ is (finitely) cocomplete. For example, the coproduct of two idals $e : I \to \O_\C$ and $f : J \to \O_\C$ is given by the pushout $P$ of $e \otimes J : I \otimes J \to J$ and $I \otimes f : I \otimes J \to I$ equipped with the morphism $P \to \O_\C$ which extends $f$ and $e$.
\end{rem}

\begin{rem}\label{idal-classif}
Assume that $\C$ is a finitely cocomplete $\IK$-linear tensor category, $J \hookrightarrow \O_\C$ is an ideal and $M \in \C$ is any object such that $J \otimes M \to M$ is zero. Then by \cref{idal-coproduct} the composition $J \oplus M \twoheadrightarrow J \hookrightarrow \O_\C$ is an idal. When $\C=\Mod(R)$ for some Dedekind domain~$R$, every idal $e : I \to R$ is isomorphic to such an idal. In fact, since the ideal $e(I) \subseteq R$ is projective, $\ker(e)$ is a direct summand of $I$, say $I = J \oplus \ker(e)$. Then $e$ induces an isomorphism $J \myiso e(I)$, and the idal property implies $e(I) \cdot \ker(e)=0$.
\end{rem}

\section{Open subschemes and idal covers}
\label{sec:open}

In this section we associate to every idal in $\Q(X)$ an open subscheme of $X$. We also introduce idal covers of tensor categories.

\begin{lemma} \label{isiso}
Let $e : I \to \O_\C$ be an idal in a tensor category $\C$.
\begin{enu}
\item If $e$ is a regular epimorphism, then $e$ is an isomorphism.
\item\label{justepi} If $\C=\Q(X)$ for some scheme $X$ and $e$ is an epimorphism, then $e$ is an isomorphism.
\end{enu}
\end{lemma}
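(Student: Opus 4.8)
The plan is to prove part (1) and then obtain part (2) from it; I will also sketch a self-contained argument for part (2).

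For part (1), since $e$ is a regular epimorphism, write it as a coequalizer $e = \operatorname{coeq}(a,b\colon A \rightrightarrows I)$ of some parallel pair; the idal condition $e \otimes I = I \otimes e$ is exactly the statement that $-\otimes e\colon (-\otimes I)\Rightarrow \operatorname{id}_\C$ is a well-copointed endofunctor. The crucial computation is the chain
\begin{align*}
a \circ (A \otimes e) &= (I \otimes e)\circ(a \otimes I) = (e \otimes I)\circ(a \otimes I) \\
&= (e \otimes I)\circ(b \otimes I) = (I \otimes e)\circ(b \otimes I) = b \circ (A \otimes e),
\end{align*}
where the first and last equalities are the naturality of $-\otimes e$ at $a$ and at $b$, the second and fourth use the idal identity $e \otimes I = I \otimes e$, and the third uses $e \circ a = e \circ b$ (throughout I identify $A \otimes \O_\C = A$ and $I \otimes \O_\C = I$). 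Thus $a$ and $b$ become equal after precomposition with $A \otimes e\colon A \otimes I \to A$. Since $e$ is an epimorphism and $\otimes$ preserves finite colimits in each variable, $A \otimes e$ is again an epimorphism, and hence $a = b$; but the coequalizer of a pair $(a,a)$ is an isomorphism onto its codomain, so $e$ is an isomorphism.

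For part (2), where $\C = \Q(X)$, one can simply invoke part (1): $\Q(X)$ is an abelian category, so the epimorphism $e$ is the cokernel of its kernel, hence a regular epimorphism, and $\otimes_{\O_X}$ is right exact. A more hands-on route is to verify the claim on an affine open cover: restriction $j^*$ along an open immersion $j\colon \Spec(A)\hookrightarrow X$ is an exact tensor functor, so $e$ restricts to a surjective idal $\varphi\colon M \to A$ of $A$-modules, whose defining relation is $\varphi(m)\,n = \varphi(n)\,m$ for all $m,n \in M$. Choosing $m_0 \in M$ with $\varphi(m_0)=1$ gives $n = \varphi(m_0)\,n = \varphi(n)\,m_0$ for every $n$, so $a \mapsto a\,m_0$ is a two-sided inverse of $\varphi$; since being an isomorphism is local on $X$, we are done.

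The step I expect to be the main obstacle is the passage in part (1) from "$a$ and $b$ agree after composing with $A \otimes e$" to "$a = b$": this is precisely where the idal hypothesis (which yields the first displayed equality) has to be married to right exactness of $\otimes$ (which makes $A \otimes e$ an epimorphism). So one must check that the standing assumptions on $\C$ are strong enough for this — which they are in the cases that matter here, namely $\C$ a (finitely) cocomplete tensor category, and in particular $\C = \Q(X)$.
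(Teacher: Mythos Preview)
Your argument is essentially an explicit unpacking of the paper's proof. The paper invokes \cite[Lemma~4.8.9]{Bra14}, which asserts that a regular epimorphism $e : I \to \O_\C$ is automatically the coequalizer of the pair $e \otimes I,\, I \otimes e : I \otimes I \rightrightarrows I$; since the idal condition makes these two morphisms equal, $e$ is the coequalizer of a constant pair and hence an isomorphism. Your direct computation is precisely what one writes down to prove that cited lemma: starting from an arbitrary presentation $e = \operatorname{coeq}(a,b)$, you show that $a$ and $b$ agree after precomposing with $A \otimes e$, and then cancel. Both routes therefore hinge on the step you flag --- that $A \otimes e$ is an epimorphism --- and this is also where the proof of \cite[Lemma~4.8.9]{Bra14} needs $\otimes$ to preserve (regular) epimorphisms. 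Strictly speaking the lemma here is stated for an arbitrary tensor category, but in every application in the paper $\C$ is at least finitely cocomplete with $\otimes$ cocontinuous in each variable, so nothing is lost. Your deduction of part~(2) from part~(1) via ``abelian $\Rightarrow$ every epimorphism is regular'' is exactly the paper's.
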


\begin{proof}
If $e$ is a regular epimorphism, then by \cite[Lemma 4.8.9]{Bra14} $e$ is the coequalizer of the pair $e \otimes I, I \otimes e : I \otimes I \rightrightarrows I$. Since these morphisms agree, $e$ must be an isomorphism. The second statement follows from the first since $\Q(X)$ is an abelian category.
\end{proof}

\begin{ex} \label{counter}
\cref{isiso}(\ref{justepi}) does not hold for arbitrary tensor categories. Consider for example the cocomplete tensor category $\C$ of torsion-free abelian groups with the usual tensor product $\otimes_{\IZ}$ and $\O_\C=\IZ$ \cite[Example 5.8.1]{Bra14}. Then $2 : \IZ \to \IZ$ is an epimorphism in $\C$, and it is an idal by \cref{idal-prop}(\ref{idalsymtr}). However, it is no isomorphism.
\end{ex}
 
\begin{defi} \label{XIdef}
Let $X$ be a scheme and let $e : I \to \O_X$ be an idal in $\Q(X)$. We define the subset $X_I \subseteq X$ by
\[X_I \coloneqq \bigl\{x \in X : e_x : I_x \to \O_{X,x} \text{ is an isomorphism}\bigr\}.\]
\end{defi}

\begin{rem}
Since $e_x$ is an isomorphism if and only if $e_x$ is an epimorphism by \cref{isiso} applied to $\Spec(\O_{X,x})$, we have $X_I = X_J$, where $J \coloneqq \im(e) \subseteq \O_X$ is a quasi-coherent ideal and $X_J = \{x \in X : J_x = \O_{X,x}\}$ is the usual open subscheme associated to $J$. In particular, $X_I$ is an open subscheme of $X$.
\end{rem}

\begin{rem}\noindent \label{Xprod}
\begin{enu}
\item\label{Xprodinter} For two idals $I \to \O_X \leftarrow J$ we have $X_{I \otimes J} = X_I \cap X_J$.
\item\label{XIpull} If $f : Y \to X$ is a morphism and $I \to \O_X$ is an idal, then $f^{-1}(X_I) = X_{f^*(I)}$.
\item If $I$ is of finite type and $X$ is quasi-compact, then $X_I$ is quasi-compact.
\end{enu}
\end{rem}

\begin{rem} \label{ueopen}
Let $X$ be a scheme and let $e : I \to \O_X$ be an idal in $\Q(X)$. Then a~morphism $f : Y \to X$ factors through $X_I \subseteq X$ if and only if $f^* : \Q(X) \to \Q(Y)$ maps $e$ to an isomorphism. In fact, by \cref{isiso} $f^*(e)$ is an isomorphism if and only if it is an epimorphism, i.e.\ $f^*(\O_X/J)=0$, where $J$ is the image of $e$. By \cite[Chap. 0, 5.2.4.1]{EGAI} this is equivalent to $f^{-1}(X \setminus X_J)=\emptyset$, which means that $f(Y) \subseteq X_J=X_I$ as sets.
\end{rem}

In the following we will often work with finitely presentable idals $I \to \O_\C$, which shall simply mean that $I$ is finitely presentable.

\begin{lemma} \label{idals-qcqs}
Let $X$ be a quasi-compact quasi-separated scheme. Then, for every quasi-compact open subscheme $U \subseteq X$ there is some finitely presentable idal $I \to \O_X$ such that $U = X_I$.
\end{lemma}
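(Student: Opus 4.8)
The plan is to reduce to the case of a basic open affine and then take unions. First I would recall the key combinatorial input: if $X$ is quasi-compact and quasi-separated, then $X$ has a basis of quasi-compact opens, and any quasi-compact open $U \subseteq X$ is a finite union $U = U_1 \cup \dots \cup U_n$ of opens of the form $X_g$ for $g \in \Gamma(V, \O_X)$ with $V \subseteq X$ quasi-compact open; in fact, by quasi-separatedness one can even arrange the $U_i$ to be affine, or to be principal opens inside affine opens whose inclusion into $X$ is an affine morphism. The quasi-separatedness is what lets us extend a section defined on an open affine to a global idal with controlled vanishing locus, because it guarantees that the relevant pushforwards preserve quasi-coherence and finite presentation.

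The heart of the matter is the single-open case: given a quasi-compact open $j \colon V \hookrightarrow X$ which is affine (or at least with $j$ affine and $j^* \O_X$ nice) and a global section $g \in \Gamma(V,\O_X)$ defining $X_g \subseteq V$ open in $X$, I want a finitely presentable idal $I \to \O_X$ with $X_I = X_g$. The construction: multiplication by $g$ gives an endomorphism of $\O_V$, hence an idal $\O_V \xrightarrow{g} \O_V$ by \cref{idal-prop}(\ref{idalsymtr}) (every endomorphism of the unit is an idal, as $\O_V$ is symtrivial). Pushing forward along $j$ and using that $j$ is affine, $j_*$ is exact and preserves quasi-coherence; one then needs to rectify $j_* \O_V$ to something finitely presentable over $\O_X$. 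Here I would use \cref{idals-qcqs}-style bookkeeping via a Zariski cover: since $X$ is quasi-compact we may cover $X$ by finitely many affine opens $\Spec A_k$; on each, $V \cap \Spec A_k$ is a quasi-compact open of an affine scheme, hence a finite union of principal opens, and $g$ restricts to a genuine element of a localization. The idal we want is then glued from the finitely presentable idals $(g_k) \subseteq A_k$ on the pieces (using \cref{idal-coproduct} to combine idals, and finite presentation being local). Concretely, I expect the clean statement to be: the ideal sheaf $\widetilde{gA}$ on an affine chart, pushed forward and patched, is of finite type, and by quasi-compactness finite type idals suffice after possibly passing to the image; one replaces "finite type" by "finitely presentable" by a standard noetherian-approximation / Grothendieck limit argument (writing $X$ as a limit of finitely presented $\IZ$-schemes), or simply by noting that over an affine base a finitely generated ideal admits a finitely presented idal $J \oplus (\text{relations})$ with the same $X_J$.

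The second step is assembling $U = X_{I_1} \cup \dots \cup X_{I_n}$: I would take $I$ to be the coproduct of the idals $I_1, \dots, I_n$ in the category of idals, which by \cref{idal-coproduct} exists and is computed by iterated pushouts of the form $I_a \otimes I_b \rightrightarrows I_a, I_b$. Finite presentability is preserved by such finite pushouts (finitely presentable objects are closed under finite colimits in a locally finitely presentable category, and $\otimes$ preserves finite presentability). It remains to check $X_{I_1 \sqcup \dots \sqcup I_n} = X_{I_1} \cup \dots \cup X_{I_n}$: a point $x$ lies in $X_{I}$ iff the stalk of the combined map $I_x \to \O_{X,x}$ is an isomorphism, and since the image ideal of the coproduct idal is the sum $\sum_a \im(e_a)$ of the image ideals (the pushout presentation surjects onto this sum in $\O_X$), we get $X_I = X_{\sum \im(e_a)} = \bigcup_a X_{\im(e_a)} = \bigcup_a X_{I_a}$, using the remark that $X_{I_a}$ depends only on $\im(e_a)$.

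I expect the main obstacle to be the single-open case with finite \emph{presentation} rather than finite type: extending a section $g$ from an affine open to a globally defined finitely presentable idal with exactly the right vanishing locus, without noetherian hypotheses, is where quasi-separatedness must be used carefully (to keep pushforwards quasi-coherent) and where one either invokes absolute noetherian approximation for qcqs schemes or argues directly on a finite affine cover. Once that is in hand, the union step and the computation of $X_I$ are formal consequences of \cref{idal-coproduct} and the remarks in \cref{sec:open}.
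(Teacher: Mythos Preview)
Your plan differs substantially from the paper's. Your union step is correct—the idal coproduct of finitely presented $I_a$ is finitely presented with image $\sum_a \im(e_a)$, giving $X_I = \bigcup_a X_{I_a}$—but the single-open step, which you rightly flag as the obstacle, is not resolved. The paper's proof avoids all of this. Standard EGA results give a quasi-coherent ideal $J \subseteq \O_X$ of finite type with $U = X_J$, and then a quasi-coherent module $P$ of finite presentation with an epimorphism $P \twoheadrightarrow J$. Compose to get $P \to \O_X$ and apply the idal reflection of \cref{idal-reflect}: the resulting idal $I$ is the coequalizer of $P \otimes P \rightrightarrows P$, hence finitely presented, and has the same image $J$, so $X_I = X_J = U$. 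No decomposition of $U$, no gluing, no noetherian approximation; the reflection lemma is exactly the device that upgrades ``finite type ideal'' to ``finitely presented idal'' in one stroke, and it is what your proposal is missing.

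As for why your attempts at step 2 do not go through: a principal open $D(g)$ inside an affine open $V \subsetneq X$ still requires a \emph{global} idal on $X$, and $g$ does not extend to a global section, so this case is no easier than the original problem. The pushforward $j_* \O_V$ is typically not finitely presented. Choosing local idals over an affine cover and gluing fails because idals chosen independently on each chart have no reason to agree on overlaps. Noetherian approximation would work, but then the decomposition of $U$ has bought you nothing and the reflection argument is strictly simpler anyway.
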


Notice that in general, when $X$ is not noetherian, we will not be able to find a finitely presentable \emph{ideal} with this property. This is yet another advantage of idals.

\begin{proof}
Since $U$ is quasi-compact and every quasi-coherent ideal is the sum of its quasi-coherent subideals of finite type \cite[Corollaire 6.9.9]{EGAI}, we find some quasi-coherent ideal $J \subseteq \O_X$ of finite type such that $U = X_J$. There is some quasi-coherent $\O_X$-module of finite presentation~$P$ which admits an epimorphism $P \twoheadrightarrow J$ \cite[Proposition 6.9.10]{EGAI}. Let $I \to \O_X$ be the reflection of $P \twoheadrightarrow J \hookrightarrow \O_X$ into the category of idals as in \cref{idal-reflect}. Then $I$ is also of finite presentation by construction, and we obtain an epimorphism of idals $I \to J$ so that $X_I = X_J$.
\end{proof}

We now introduce covers of tensor categories. For simplicity, we will restrict to covers with only two elements. Because of the following \cref{gen-qcqs} this will be sufficient for our purposes. This is convenient because we will not have to pay attention to any cocycle conditions. 

\begin{rem} \label{gen-qcqs}
Let $\A$ be a class of schemes with the following two properties: Firstly, $\A$~contains all affine schemes. Secondly, if $X$ is a quasi-compact quasi-separated scheme which is covered by quasi-compact open subschemes $X_1$ and $X_2$ such that $X_1$, $X_2$ and $X_1 \cap X_2$ are contained in $\A$, then $X$ is contained in $\A$. Then, $\A$ contains all quasi-compact quasi-separated schemes. In fact, one first proves that $\A$ contains all quasi-compact open subschemes of affine schemes, using an induction on the number of basic-open subsets from an open cover. The general case then uses an induction on the number of affine schemes from an open cover.
\end{rem}
 
\begin{defi}\label{coverdef}
Let $\C$ be a finitely cocomplete tensor category. An \emph{idal cover} of $\C$ consists of two idals $e : I \to \O_\C$ and $f : J \to \O_\C$ such that the following commutative square is a pushout in $\C$.
\[\begin{tikzcd}[row sep=6ex, column sep=6ex] I \otimes J \ar{r}{e \otimes J}  \ar{d}[swap]{I \otimes f} & J \ar{d}{f} \\ I \ar{r}[swap]{e} & \O_\C \end{tikzcd}\]
By \cref{idal-coproduct} this means that $\id : \O_\C \to \O_\C$ is the coproduct of $e$ and $f$ in the category of idals in $\C$. Clearly, any finitely cocontinuous tensor functor preserves idal covers.
\end{defi}

\begin{lemma}\label{covercrit}
Consider two idals $e : I \to \O_\C$ and $f : J \to \O_\C$ in a finitely cocomplete tensor category $\C$.
\begin{enu}
\item These idals form an idal cover if and only if $(e,f) : I \oplus J \to \O_\C$ is a regular epimorphism.
\item When $\C=\Q(X)$ for some scheme $X$ and $(e,f) : I \oplus J \to \O_X$ is an epimorphism, then these idals form an idal cover, and we have an open covering $X = X_I \cup X_J$.
\end{enu}
\end{lemma}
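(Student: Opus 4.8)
\textbf{Proof plan for \cref{covercrit}.}

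The plan is to treat the two statements in sequence, using the reflection of $\C/\O_\C$ onto idals from \cref{idal-reflect} together with the description of the coproduct of idals in \cref{idal-coproduct}. For part (1), I would unwind the definition of an idal cover: by \cref{coverdef} (and the remark there) the pushout square exhibits $\id\colon\O_\C\to\O_\C$ as the coproduct of $e$ and $f$ in the category of idals. By \cref{idal-coproduct} this coproduct is computed by first forming the pushout $P$ of $e\otimes J$ and $I\otimes f$ along $I\otimes J$, then reflecting the canonical map $P\to\O_\C$ onto an idal via the coequalizer construction of \cref{idal-reflect}. On the other hand, $P$ is exactly the pushout that appears in the defining square, so the square is a pushout in $\C$ precisely when $P\to\O_\C$ is already an idal and equals $\id$ — i.e.\ precisely when the coproduct of $e$ and $f$ in idals is $\id$. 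Now observe that the pushout $P$ is also the cokernel pair's quotient: concretely $P$ is the coequalizer of the two composites $I\otimes J\rightrightarrows I\oplus J$, so the induced map $I\oplus J\to P$ is a regular epimorphism, and $(e,f)\colon I\oplus J\to\O_\C$ factors as $I\oplus J\twoheadrightarrow P\to\O_\C$. Hence $(e,f)$ is a regular epimorphism if and only if $P\to\O_\C$ is a regular epimorphism; and a map to the unit that is a regular epimorphism and an idal must be an isomorphism by \cref{isiso}(1) (after reflecting, or directly noting $P\to\O_\C$ regular epi forces it to be the coequalizer of $e\otimes J$ and $I\otimes f$ pulled up, which agree). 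Chasing these equivalences carefully gives the claim. The one point requiring care is making precise that "$(e,f)$ is a regular epimorphism" is equivalent to "$P\to\O_\C$ is invertible", which I would do by showing $P\to\O_\C$ is always a regular epimorphism whenever $(e,f)$ is, since regular epimorphisms in a finitely cocomplete category compose with the reflection, and conversely the square being a pushout forces $(e,f)$ to be a pushout of the split epimorphism $I\otimes J\oplus(\text{stuff})$... so the cleanest route is: the square is a pushout $\iff$ $\O_\C$ together with $e,f$ is the coproduct in idals $\iff$ the reflection of $(P\to\O_\C)$ is $\id$ $\iff$ $P\to\O_\C$ is already an idal and an isomorphism $\iff$ $(e,f)$ is a regular epi (using that $I\oplus J\to P$ is a regular epi and \cref{isiso}(1)).

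For part (2), I would specialize to $\C=\Q(X)$, where the underlying category is abelian, so regular epimorphisms coincide with epimorphisms; thus if $(e,f)\colon I\oplus J\to\O_X$ is an epimorphism it is a regular epimorphism, and part (1) immediately gives that $e,f$ form an idal cover. It remains to identify the open subsets: $(e,f)$ epi means the images of $e$ and $f$ generate $\O_X$, i.e.\ $\im(e)+\im(f)=\O_X$ as quasi-coherent ideals. Stalkwise this says $\im(e)_x+\im(f)_x=\O_{X,x}$ for all $x$, and since these are ideals of the local ring $\O_{X,x}$, at least one of them must be the unit ideal (a proper ideal cannot have its sum with a proper ideal equal the whole ring — more precisely, if both were contained in the maximal ideal their sum would be too). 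Hence every $x$ lies in $\{x:\im(e)_x=\O_{X,x}\}$ or $\{x:\im(f)_x=\O_{X,x}\}$, which by the remark following \cref{XIdef} are exactly $X_I$ and $X_J$. Therefore $X=X_I\cup X_J$.

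The main obstacle I anticipate is the bookkeeping in part (1) around the reflection functor: one must not conflate the pushout $P$ taken in $\C$ with the coproduct taken in idals (the latter is the reflection of the former), and the equivalence "$(e,f)$ regular epi $\iff$ square is a pushout" has to be threaded through \cref{isiso}(1) to conclude that a regular-epi idal into $\O_\C$ is invertible. Once that lemma is invoked the rest is formal. Part (2) is then essentially a one-line reduction to part (1) plus an elementary local-ring argument, modulo citing the already-established identification $X_I=X_{\im(e)}$.
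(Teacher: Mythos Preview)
Your part~(2) matches the paper's argument essentially verbatim.

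For part~(1) you take a different route. The paper invokes \cite[Lemma~4.8.9]{Bra14}, which says that any regular epimorphism $h:A\to\O_\C$ in a finitely cocomplete tensor category is automatically the coequalizer of $h\otimes A$ and $A\otimes h$; applying this to $h=(e,f)$ and expanding $(I\oplus J)^{\otimes 2}$ into four summands, the idal equations $e\otimes I=I\otimes e$ and $f\otimes J=J\otimes f$ make two of the summands redundant, leaving precisely the pair $\iota_I\circ(I\otimes f),\ \iota_J\circ(e\otimes J)$ whose coequalizer is the pushout in question. Your strategy---factor $(e,f)$ through the pushout $P$ and show $P\to\O_\C$ is an isomorphism via \cref{isiso}(1)---is a legitimate alternative, but your writeup has two gaps.

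First, the reflection is a red herring. \cref{idal-coproduct} asserts (and one checks directly, using that $\pi\otimes\pi$ is epi together with the idal equations for $e$ and $f$ on the four summands of $(I\oplus J)^{\otimes 2}$) that $P\to\O_\C$ is \emph{already} an idal; no reflection is needed. Your chain of equivalences involving ``the reflection of $(P\to\O_\C)$ is $\id$'' is therefore unnecessary and, as stated, not correct: the reflection being $\id$ does not by itself force $P\to\O_\C$ to be an isomorphism in $\C$.

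Second, the step you flag as ``the one point requiring care'', namely that $(e,f)$ regular epi implies $g\colon P\to\O_\C$ regular epi, is a general categorical fact that you should simply state and prove: if $g\circ\pi$ is a coequalizer of $u,v$ and $\pi$ is an epimorphism, then $g$ is the coequalizer of $\pi u,\pi v$ (given $h$ with $h\pi u=h\pi v$, factor $h\pi$ through $g\pi$ and cancel $\pi$). Your attempted justifications (``regular epimorphisms compose with the reflection'', ``pushout of the split epimorphism\dots'') do not establish this. Once this lemma and the idal property of $P\to\O_\C$ are in place, \cref{isiso}(1) finishes the argument cleanly and your route becomes a valid alternative to the paper's.
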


\begin{proof}
1. The direction $\Longrightarrow$ is trivial. To prove $\Longleftarrow$, assume that $(e,f) : I \oplus J \to \O_\C$ is a~regular epimorphism. By \cite[Lemma 4.8.9]{Bra14} $(e,f)$ is the coequalizer of the pair
\[(e,f) \otimes (I \oplus J),\, (I \oplus J) \otimes (e,f) : (I \oplus J)^{\otimes 2} \rightrightarrows I \oplus J.\]
Under the canonical isomorphism $I^{\otimes 2} \oplus J^{\otimes 2} \oplus (I \otimes J) \oplus (J \otimes I) \myiso (I \oplus J)^{\otimes 2}$ these two morphisms correspond to (where $\iota_I$ and $\iota_J$ denote  the coproduct inclusions)
\[\bigl(\iota_I \circ (e \otimes I),\,\iota_J \circ (f \otimes J),\,\iota_J \circ (e \otimes J),\,\iota_I \circ (f \otimes I)\bigr)\]
and
\[\bigl(\iota_I \circ (I \otimes e),\,\iota_J \circ (J \otimes f),\,\iota_I \circ (I \otimes f),\,\iota_J \circ (J \otimes e)\bigr).\]
Because of $e \otimes I = I \otimes e$ and $f \otimes J = J \otimes f$, it follows that $(e,f)$ is the coequalizer of the pair
\[\iota_J \circ (e \otimes J),\, \iota_I \circ  (I \otimes f) : I \otimes J \to I \oplus J.\]
This exactly means that the square in \cref{coverdef} is a pushout.

2. The first statement follows from (1) since $\Q(X)$ is an abelian category. If $x \in X$, then $(e_x,f_x) : I_x \oplus J_x \to \O_{X,x}$ is surjective. Hence, the images of $e_x$ and $f_x$ cannot be both contained in the unique maximal ideal, and therefore $e_x$ or $f_x$ is surjective. This shows that $X = X_I \cup X_J$. From this one can also directly deduce that $I \oplus_{I \otimes J} J \to \O_X$ is an isomorphism.
\end{proof}
 
\begin{ex}
In \cref{covercrit} it is important to demand that $(e,f)$ is a \emph{regular} epimorphism. In fact, for the idal $I$ in \cref{counter} $I \oplus 0 \to \O_\C$ is an epimorphism, but $(I,0)$ is not an idal cover. For a less pathological counterexample, consider for a field $\IK$ the cocomplete $\IK$-linear tensor category $\C$ of $\IK[X,Y]$-modules $M$ for which $(X;Y) : M \to M^2$ is injective (cf.\ \cite[Example 4.8.12]{Bra14}). The unit object is $\O_\C=\IK[X,Y]$; the tensor product is more complicated. By construction $(X,Y) : \O_\C \oplus \O_\C \to \O_\C$ is an epimorphism. But it is not a regular epimorphism (and hence no idal cover): Otherwise, \cite[Lemma 4.8.9]{Bra14} would imply that it is the cokernel of $(Y;-X) : \O_\C \to \O_\C \oplus \O_\C$, which is not the case.
\end{ex}

\begin{rem} \label{getall}
Let $X$ be a scheme and let $X = U \cup V$ be an open covering. Then the vanishing ideals $I(X \setminus U), I(Y \setminus V) \subseteq \O_X$ form an idal cover. This is because $I(X \setminus U)|_U = \O_U$ and $I(Y \setminus V)|_V = \O_V$, so that $I(X \setminus U) \oplus I(Y \setminus V) \to \O_X$ is an epimorphism. If $X$ is quasi-compact quasi-separated and $U,V$ are quasi-compact, then we may even write $U = X_I$ and $V = X_J$ for idals $I \to \O_X \leftarrow J$ of finite presentation (\cref{idals-qcqs}). These form an idal cover, because $I \oplus J \to \O_X$ is an epimorphism both on $X_I$ and on $X_J$. Besides, $X$ is the pushout of $X_I$ and $X_J$ over $X_I \cap X_J = X_{I \otimes J}$. Therefore, it follows from the proof of \cref{gen-qcqs} that every quasi-compact quasi-separated scheme is built up out of affine schemes using finitely many gluings with respect to finitely presentable idal covers.
\end{rem}

\begin{lemma} \label{covprod}
Let $I,J,J'$ be three idals in a finitely cocomplete tensor category $\C$. Assume that $(I,J)$ and $(I,J')$ are idal covers of $\C$. Then $(I,J \otimes J')$ is an idal cover of $\C$ as well.
\end{lemma}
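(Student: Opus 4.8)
The plan is to use the characterization of idal covers from \cref{covercrit}(1): a pair $(I, J)$ is an idal cover if and only if $(e, f) : I \oplus J \to \O_\C$ is a regular epimorphism. So the task reduces to showing that if $(e, f) : I \oplus J \to \O_\C$ and $(e, g) : I \oplus J' \to \O_\C$ are regular epimorphisms, then $(e, f \otimes g) : I \oplus (J \otimes J') \to \O_\C$ is a regular epimorphism, where $f \otimes g : J \otimes J' \to \O_\C$ denotes the idal product. An equivalent and perhaps cleaner reformulation uses \cref{coverdef} directly: $(I, J)$ is a cover iff $\id_{\O_\C}$ is the coproduct of $e$ and $f$ in the category of idals, i.e.\ iff $I \oplus_{I \otimes J} J \to \O_\C$ is an isomorphism (equivalently, the pushout square is present). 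I would work with whichever of these two formulations makes the colimit bookkeeping lightest.

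The key computational step is a two-variable pushout manipulation. First I would record that the idal product $J \otimes J' \to \O_\C$ fits into a pushout: since $(I,J')$ is a cover, tensoring the pushout square defining that cover with $J$ (which preserves colimits, as $\otimes$ is cocontinuous in each variable) shows that $J$ is the pushout of $J \otimes I \otimes J' \rightrightarrows J \otimes I,\, J \otimes J'$ along the appropriate maps; more usefully, I want to express $\O_\C$ as an iterated pushout. Concretely: from the cover $(I,J)$, $\O_\C = I \oplus_{I \otimes J} J$; now substitute into the $J$-slot the description coming from the cover $(I, J')$. The cleanest route is to observe that the category of idals is cocomplete (\cref{idal-coproduct}) and that forming the coproduct of idals $I$ and $J$ amounts to the pushout in \cref{coverdef}. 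So "$(I,J)$ is a cover" reads "$I + J \cong \O_\C$" in the category of idals; similarly "$(I,J') $ is a cover" reads "$I + J' \cong \O_\C$". I then need to identify the coproduct of idals $I$ and $J \otimes J'$. I expect $J \otimes J'$ to be (up to natural map to $\O_\C$) the coproduct-like/tensor object whose cover-behaviour is controlled by $X_{J} \cap X_{J'}$ in the geometric picture; the algebraic fact I want is $I + (J \otimes J') \cong (I + J) \otimes_{?} \cdots$ — but since $\O_\C$ is the monoidal unit this should collapse. I would verify the isomorphism $I + (J\otimes J') \cong \O_\C$ by checking it satisfies the universal property: a morphism out of it into an idal $K$ amounts to a pair of compatible maps $I \to K$, $J \otimes J' \to K$, and I would show these are forced exactly as for $I + J \cong \O_\C$, using that the map $J\otimes J' \to \O_\C$ factors through both $e\otimes J' : J\otimes J' \to J'$ and the symmetric version, hence through the cover data.

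Alternatively, and probably more efficiently, I would argue with regular epimorphisms directly and avoid the idal-category formalism: $(I,J)$ a cover gives that $I \oplus J \to \O_\C$ is a regular epi; tensoring with $J'$ and using cocontinuity, $I\otimes J' \oplus J \otimes J' \to J'$ is a regular epi; precompose/combine with the regular epi $I \oplus J' \to \O_\C$; since $I \otimes J' \to I$ (via $I \otimes f'$, using $f'$ as the structure map of $J'$ — wait, the map is $I \otimes g$ landing in $I$) the composite $I \oplus (I \otimes J') \oplus (J \otimes J') \to \O_\C$ is a regular epi, and the image of the $I \otimes J'$ summand is contained in the image of the $I$ summand, so it can be dropped, yielding that $I \oplus (J\otimes J') \to \O_\C$ is a regular epi, i.e.\ $(I, J\otimes J')$ is a cover by \cref{covercrit}(1).

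The main obstacle I anticipate is the bookkeeping around "the image of one summand is contained in another, so it can be dropped": regular epimorphisms are not as flexible as epimorphisms under such surgery (the composite of regular epis need not be regular in a general finitely cocomplete category, and a coproduct summand being redundant does not automatically preserve regularity). To handle this rigorously I would fall back on the coequalizer description from \cite[Lemma 4.8.9]{Bra14} that \cref{covercrit}'s proof already exploits — namely expressing each cover as an explicit reflexive coequalizer/pushout and then chasing the combined diagram — or, cleanest of all, prove the lemma purely via the pushout formulation of \cref{coverdef}: show the outer rectangle built by stacking the pushout square for $(I,J')$ (tensored appropriately) on top of the pushout square for $(I,J)$ is itself the pushout square exhibiting $(I, J \otimes J')$ as a cover, invoking the pasting law for pushouts. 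That pasting argument sidesteps all regular-epi subtleties and is the route I would ultimately write up.
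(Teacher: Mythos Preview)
Your final route—the pushout pasting argument—is exactly the paper's proof: tensor the cover square for $(I,J)$ with $J'$ to get the left square, place the cover square for $(I,J')$ on the right, and conclude that the outer rectangle is a pushout, which is precisely the cover condition for $(I,J\otimes J')$. You take a long detour through regular-epimorphism manipulations before arriving there, and you correctly flag that those manipulations are fragile; just write up the pasting argument directly and skip the rest.
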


\begin{proof}
Consider the following commutative diagram:
\[\begin{tikzcd}[row sep=6ex, column sep=6ex]
I \otimes J \otimes J' \ar{r} \ar{d} & I \otimes J' \ar{d} \ar{r} & I \ar{d} \\
J \otimes J' \ar{r} & J' \ar{r} & \O_\C\mydot
\end{tikzcd}\]
The square on the left is a pushout since $(I,J)$ is a cover and tensoring with $J'$ preserves pushouts. The square on the right is a pushout since $(I,J')$ is a cover. It follows that the outer rectangle is a pushout as well. But this means that $(I,J \otimes J')$ is a cover.
\end{proof}

\begin{cor} \label{covpot}
Let $(I,J)$ be an idal cover of a finitely cocomplete tensor category $\C$. Then, for all $n,m \geq 0$ also $(I^{\otimes n},J^{\otimes m})$ is an idal cover.
\end{cor}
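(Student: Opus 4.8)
The plan is to bootstrap from \cref{covprod} together with the evident symmetry of the cover condition, via two nested inductions. First I would dispose of the degenerate cases: for any idal $f : J \to \O_\C$ the pair $(\O_\C, J)$ — where the first slot carries the identity idal $\id_{\O_\C}$ of \cref{idal-prop}(\ref{idalsymtr}) — is an idal cover, because after inserting the coherence isomorphisms the square of \cref{coverdef} becomes the square with $f$ on both vertical edges and identities on both horizontal edges, which is trivially a pushout; symmetrically $(I, \O_\C)$ is a cover. Since $I^{\otimes 0} = \O_\C = J^{\otimes 0}$, this settles the statement whenever $n = 0$ or $m = 0$.

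Next I would note that being an idal cover is symmetric in the two idals: transporting the pushout square of \cref{coverdef} along the self-symmetry $S_{I,J} : I \otimes J \myiso J \otimes I$ turns the square for $(I, J)$ into the square for $(J, I)$, so one is a pushout if and only if the other is. Hence \cref{covprod} also holds in its ``first-variable'' form: if $(I, J)$ and $(I', J)$ are idal covers, then so is $(I \otimes I', J)$.

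With these two ingredients in hand, fix the cover $(I, J)$. An induction on $n$ — base case $n \le 1$ handled above and by hypothesis, inductive step applying the first-variable form of \cref{covprod} to the covers $(I^{\otimes n}, J)$ and $(I, J)$ — shows that $(I^{\otimes n}, J)$ is a cover for every $n \ge 0$. Fixing such an $n$ and inducting on $m$ — base case $m = 0$ handled above, inductive step applying \cref{covprod} to the covers $(I^{\otimes n}, J^{\otimes m})$ and $(I^{\otimes n}, J)$ — then shows that $(I^{\otimes n}, J^{\otimes m})$ is a cover for all $n, m \ge 0$. There is essentially no obstacle here: the only points needing a little care are the (purely formal) symmetry of the cover condition, which is what licenses applying \cref{covprod} in the first variable, and checking that the degenerate pairs $(\O_\C, J)$ and $(I, \O_\C)$ really are covers so that the inductions can start.
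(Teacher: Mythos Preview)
Your argument is correct and follows the same inductive route as the paper, which simply records ``This follows inductively from \cref{covprod}.'' You have made explicit the two points the paper leaves tacit: the degenerate base cases $(\O_\C,J)$ and $(I,\O_\C)$, and the symmetry of the cover condition needed to iterate \cref{covprod} in the first variable as well as the second.
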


\begin{proof}
This follows inductively from \cref{covprod}.
\end{proof}

\section{Deligne's formula}
\label{sec:deligne}

The following result is a variant of Deligne's Formula \cite[Appendix, Proposition 4]{Har66}  (which, according to Deligne, was already well-known before him), where the ideal $J \subseteq \O_X$ is replaced by an idal $J \to \O_X$ and the submodule $J^n M \subseteq M$ by the tensor product $J^{\otimes n} \otimes M$. The proof is almost the same as in \cite[Proposition 6.9.17]{EGAI}, but it does not use the Artin--Rees Lemma because tensor products allow us more freedom to define homomorphisms. As a~consequence we do not have to assume that the scheme $X$ is noetherian.

\begin{prop} \label{deligne}
Let $X$ be a quasi-compact quasi-separated scheme. Let $e : J \to \O_X$ be a~quasi-coherent idal of finite presentation, and let $U=X_J$ be the associated open subscheme. Consider two quasi-coherent $\O_X$-modules $M,N$, where $M$ is of finite presentation. The canonical homomorphism
\[\rho : {\varinjlim}_n \Hom(J^{\otimes n} \otimes M,N) \to \Hom(M|_U,N|_U),\]
which maps a homomorphism $J^{\otimes n} \otimes M \to N$ to its restriction $M|_U \myiso (J^{\otimes n} \otimes M)|_U \to N|_U$, is an isomorphism. In particular, we obtain an isomorphism
\[{\varinjlim}_n \Hom(J^{\otimes n},N) \myiso \Gamma(U,N).\]
\end{prop}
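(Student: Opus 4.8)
The plan is to reduce the statement to the case of an affine $X$ and there to prove it by an explicit Čech computation, using the tensor powers $J^{\otimes n}$ — rather than the ordinary powers of the ideal $\im(e)$ — in order to dispense with the Artin--Rees Lemma, and hence with any noetherian hypothesis. First I would record that the transition maps in the colimit are the canonical morphisms $J^{\otimes(n+1)}\to J^{\otimes n}$ of \cref{idal-prop}(\ref{transit}); the idal condition is exactly what makes these independent of the contracted factor, and over $U=X_J$ they become isomorphisms because $e|_U$ is one, so restriction to $U$ gives a well-defined $\rho$ compatible with the colimit. By the tensor--hom adjunction $\Hom(J^{\otimes n}\otimes M,N)\cong\Hom(J^{\otimes n},\HOM(M,N))$, and since $M$ is of finite presentation $\HOM(M,N)$ is quasi-coherent with $\HOM(M,N)|_U\cong\HOM(M|_U,N|_U)$; so it suffices to treat $M=\O_X$, i.e.\ to prove that $\rho:\varinjlim_n\Hom(J^{\otimes n},N)\myiso\Gamma(U,N)$ for arbitrary quasi-coherent $N$, which is already the ``in particular'' statement.

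Next I would reduce to $X$ affine. Both sides of this last map are the global sections of quasi-coherent $\O_X$-modules: the source of $\varinjlim_n\HOM(J^{\otimes n},N)$ (a filtered colimit of quasi-coherent modules, as each $J^{\otimes n}$ is of finite presentation), the target of $j_*j^*N$, where $j:U\hookrightarrow X$ is quasi-compact quasi-separated since $U$ is quasi-compact (\cref{Xprod}) and $X$ is quasi-compact quasi-separated — this being one of the two places the hypotheses on $X$ enter. The other is that $\Gamma(X,-)$ commutes with this filtered colimit because $X$ is quasi-compact quasi-separated. Since the formation of both sheaves commutes with restriction to an affine open $V\subseteq X$ (for $j_*j^*N$ by flat base change along $V\hookrightarrow X$), and since on an affine scheme taking global sections is exact and commutes with all colimits, it is enough to prove the displayed isomorphism when $X=\Spec A$. (Along the way this also establishes the sheafified form $\varinjlim_n\HOM(J^{\otimes n},N)\myiso j_*j^*N$.)

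So let $X=\Spec A$, let $J$ correspond to a finitely presented $A$-module equipped with a map $e$ to $A$, put $J_0:=\im(e)\subseteq A$, and pick generators $a_1,\dots,a_r$ of the finitely generated ideal $J_0$, so that $U=X_J=X_{J_0}=\bigcup_i D(a_i)$. Each $e_{a_i}$ is surjective, hence an isomorphism by \cref{isiso}, so $(J^{\otimes n})_{a_i}\cong A_{a_i}$ and likewise over the $D(a_ia_j)$; using the sheaf property of $\widetilde N$ on $\{D(a_i)\}$ I identify $\Gamma(U,N)$ with the families $(s_i)$, $s_i\in N_{a_i}$, agreeing in the $N_{a_ia_j}$. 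For \emph{injectivity}: if $\phi:J^{\otimes n}\to N$ vanishes on $U$, then $\phi_{a_i}=0$ for all $i$, and since $J^{\otimes n}$ is finitely generated there is an $m$ with $a_i^m\phi=0$ for all $i$; by the pigeonhole principle $J_0^{rm}\subseteq\sum_i a_i^m A$, and the composite of $\phi$ with the transition $J^{\otimes(n+rm)}\to J^{\otimes n}$ sends a pure tensor $x\otimes y_1\otimes\cdots\otimes y_{rm}$ to $e(y_1)\cdots e(y_{rm})\cdot\phi(x)$, which vanishes because $e(y_1)\cdots e(y_{rm})\in\sum_i a_i^m A$. This is precisely where the tensor powers take over the role of Artin--Rees: the idal-built transition maps make injectivity rest on the elementary inclusion $J_0^{rm}\subseteq\sum_i a_i^m A$.

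The remaining and, I expect, hardest step is \emph{surjectivity}. Given a compatible family $(s_i)$, I would clear denominators to write $s_i=\sigma_i/a_i^n$ with $\sigma_i\in N$ and a common $n$, and after enlarging $n$ and rescaling the $\sigma_i$ turn the compatibility into the identities $a_j^n\sigma_i=a_i^n\sigma_j$ in $N$; one must then build a homomorphism $J^{\otimes N}\to N$ (for some $N\ge rn$) restricting to $(s_i)$. Here the ``freedom to define homomorphisms'' of the tensor formulation is the point: a map out of $J^{\otimes rn}$ is the same as an $A$-multilinear map $J\times\cdots\times J\to N$, and one writes such a map down by associating to $(y_1,\dots,y_{rn})$ — via a fixed monomial presentation of the element $e(y_1)\cdots e(y_{rn})\in J_0^{rn}$ and the pigeonhole principle — a combination of the $\sigma_i$, the identities $a_j^n\sigma_i=a_i^n\sigma_j$ ensuring well-definedness and that the map restricts to $s_i$ over each $D(a_i)$. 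The subtlety I anticipate is that this construction is canonical only modulo the submodule $T\subseteq N$ of elements annihilated by a power of $J_0$; but $N\to N/T$ is an isomorphism after localizing at each $a_i$, so $\Gamma(U,N)=\Gamma(U,N/T)$, and I would first reduce to the case $T=0$. This reduction is where the finite presentation of $J$ is genuinely needed and is the technical heart: from $0\to T\to N\to N/T\to 0$ it amounts to $\varinjlim_n\Hom(J^{\otimes n},T)=0$ (immediate from the injectivity argument, since $J^{\otimes n}$ is finitely generated and $T$ is $J_0$-power-torsion) together with $\varinjlim_n\mathrm{Ext}^1_A(J^{\otimes n},T)=0$ (obtained from $J^{\otimes n}$ being finitely \emph{presented}, so that its syzygies are finitely generated, the transition maps then killing any fixed $\mathrm{Ext}^1$-class after sufficiently many steps). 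Granting $T=0$, the multilinear map above is an honest homomorphism $J^{\otimes rn}\to N$ restricting to $(s_i)$, which gives surjectivity. Taking $M=\O_X$ throughout yields the displayed ``in particular'' statement, and the general case follows by the reduction in the first step.
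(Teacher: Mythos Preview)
Your overall strategy is sound and your injectivity argument matches the paper's. The surjectivity argument, however, takes a genuinely different route from the paper, and while it can be made to work, it is more circuitous and the $\mathrm{Ext}^1$ step is more delicate than your parenthetical suggests.

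The paper does \emph{not} reduce to $M=\O_X$ and does not pass through any torsion reduction or $\mathrm{Ext}$ groups. Instead, keeping $M$ general, given $f:M|_U\to N|_U$ it first lifts the finitely many sections $e^{\otimes d}(s_{i_1,\dots,i_d}|_U)\cdot f(m_j|_U)$ to global sections $u_{i_1,\dots,i_d,j}$ of $N$ (this is the standard clearing-denominators step). It then defines two maps out of a single free module $F$: an epimorphism $p:F\twoheadrightarrow J^{\otimes d}\otimes M$ and a map $q:F\to N$. By construction $q|_U$ factors through $p|_U$, so $\ker(p)|_U\subseteq\ker(q)|_U$. The key point is that $\ker(p)$ is of finite type because $J$ and $M$ are of finite presentation, whence $I^k\ker(p)\subseteq\ker(q)$ for some $k$; this forces $e^{\otimes k}\otimes q$ to descend to a map $J^{\otimes(k+d)}\otimes M\to N$ restricting to $f$. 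No $\mathrm{Ext}^1$, no reduction to the torsion-free case.

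By contrast, your plan first strips off $M$ via tensor--hom, then reduces to $T=0$. The step $\varinjlim_n\mathrm{Ext}^1_A(J^{\otimes n},T)=0$ is correct but not a one-liner: a class $\xi$ is represented by $\phi:K\to T$ with $K$ the (finitely generated) first syzygy of $J^{\otimes n}$, so $J_0^p\xi=0$ for some $p$; this says each $a\in J_0^p$ gives a lift of $a\cdot\mathrm{id}_{J^{\otimes n}}$ to $E$, but the lifts over a generating set of $J^{\otimes p}$ need not be compatible, and the incompatibility is a map from the (finitely generated) first syzygy of $J^{\otimes p}$ into $T$, which is in turn killed only after a further $q$ steps. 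So the pullback of $\xi$ dies at stage $n+p+q$, and two invocations of finite presentation are needed. Your ``well-definedness only modulo $T$'' construction then works in the $T=0$ case because $N\hookrightarrow\Gamma(U,\widetilde N)$ makes the lift unique once it exists, and existence follows from the pigeonhole argument you sketch. All of this is fine; it is just longer. The paper's free-module-and-kernel trick accomplishes the same thing in one pass and uses finite presentation exactly once, to make $\ker(p)$ finitely generated.
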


\begin{proof}
The transition maps of the colimit are induced by the canonical morphisms of idals $J^{\otimes n} \otimes e :  J^{\otimes (n+1)} \to J^{\otimes n}$, see \cref{idal-prop}(\ref{transit}). Since $X$ is quasi-compact quasi-separated, and $\smash{{\varinjlim}_n}$ commutes with finite limits, we may reduce to the case that $X$ is affine as usual, cf.\ the proof of \cite[Proposition 6.9.17]{EGAI} and \cref{gen-qcqs}. So assume that $X$ is affine and let $I \coloneqq \im(e : J \to \O_X)$.

The homomorphism $\rho$ injective: Let $J^{\otimes n} \otimes M \to N$ be a homomorphism which vanishes on $U$. Its image is a quasi-coherent $\O_X$-module of finite type which vanishes on $U$, thus is annihilated by $I^p$ for some $p \geq 0$ by \cite[Proposition 6.8.4]{EGAI}. This means that the composition
$J^{\otimes (n+p)} \otimes M \to J^{\otimes n} \otimes M \to N$
vanishes. This proves the injectivity of $\rho$. Notice that here we only used that $J$ is of finite type.

The homomorphism $\rho$ is surjective: Let $(s_i)_{1 \leq i \leq \ell}$ be a family of global generators of $J$, and let $(m_j)_{1 \leq j \leq n}$ be a family of global generators of $M$. Let $f : M|_U \to N|_U$ be a homomorphism. Let $n_j = f(m_j|_U) \in \Gamma(U,N)$. Just as in the proof of \cite[Proposition 6.9.17]{EGAI} one finds some $h > 0$ such that for all $i,j$ the local section $e(s_i|_U)^h  \cdot n_j$ lifts to a global section of $N$. Let $d \coloneqq \ell (h-1)+1$. Then $J^{\otimes d}$ is generated by the global sections $s_{i_1,\dotsc,i_d} \coloneqq s_{i_1} \otimes \cdots \otimes s_{i_d}$ with $1 \leq i_k \leq \ell$, and by the pigeonhole principle for every such generator $e^{\otimes d}(s_{i_1,\dotsc,i_d}|_U)$ is a multiple of $e(s_i|_U)^h$ for some $i$. Hence, each $e^{\otimes d}(s_{i_1,\dotsc,i_d}|_U) \cdot n_j$ lifts to some global section $u_{i_1,\dotsc,i_d,j}$ of $N$.

On the free $\O_X$-module $F \coloneqq \O_X^{\{1,\dotsc,\ell\}^d  \times \{1,\dotsc,n\}}$ with global generators $x_{i_1,\dotsc,i_d,j}$ we define an epimorphism
\[p :  F \twoheadrightarrow J^{\otimes d} \otimes M,  ~ p(x_{i_1,\dotsc,i_d,j}) \coloneqq s_{i_1,\dotsc,i_d} \otimes m_j\]
and a homomorphism
\[q :  F \to N,  ~ q(x_{i_1,\dotsc,i_d,j}) \coloneqq u_{i_1,\dotsc,i_d,j}.\]
Then $q|_U = (e^{\otimes d}|_U \otimes f) \circ p|_U$ holds by construction. Hence, the kernels $P \coloneqq \ker(p)$ and $Q \coloneqq \ker(q)$ satisfy $P|_U \subseteq Q|_U$, and $P$ is of finite type since $J,M$ are of finite presentation. Hence, $I^k P \subseteq Q$ for some $k \geq 0$. This means that $e^{\otimes k} \otimes q : J^{\otimes k} \otimes F \to N$ vanishes when composed with $J^{\otimes k} \otimes P \to J^{\otimes k} \otimes F$. Since the sequence
\[\begin{tikzcd}[column sep=7.5ex]
J^{\otimes k} \otimes P \ar{r} & J^{\otimes k} \otimes F \ar{r}{J^{\otimes k} \otimes p} & J^{\otimes (k+d)} \otimes M \ar{r} & 0
\end{tikzcd}\]
is exact, there is a morphism $g : J^{\otimes (k+d)} \otimes M \to N$ such that $g \circ (J^{\otimes k} \otimes p) = e^{\otimes k} \otimes q$. It follows that
\begin{align*}
g|_U \circ (J^{\otimes k}|_U \otimes p|_U) & = (e^{\otimes k}|_U \otimes N|_U) \circ (J^{\otimes k}|_U \otimes q|_U)\\
& = (e^{\otimes k}|_U \otimes N|_U) \circ (J^{\otimes k}|_U \otimes e^{\otimes d}|_U \otimes f) \circ (J^{\otimes k}|_U \otimes p|_U) \\
& = (e^{\otimes (k+d)}|_U \otimes f) \circ (J^{\otimes k}|_U \otimes p|_U)
\end{align*}
and hence $g|_U = e^{\otimes (k+d)}|_U \otimes f$. But this precisely means that $g$ is a preimage of $f$ under $\rho$. Hence, $\rho$ is surjective.
\end{proof}

\begin{recol}
Recall that if $M,N$ are quasi-coherent $\O_X$-modules and $M$ is of finite presentation, then the $\O_X$-module $\HOM(M,N)$ is again quasi-coherent \cite[Proposition 9.1.1]{EGAIo}. Also recall that the pushforward functor $f_*$ associated to a quasi-compact quasi-separated morphism $f$ preserves quasi-coherent modules \cite[Proposition 6.7.1]{EGAI}.
\end{recol}

\begin{cor} \label{deligne-sheaf}
Let $X$ be a quasi-compact quasi-separated scheme. Let $J \to \O_X$ be a quasi-coherent idal of finite presentation, and let $j : X_J \hookrightarrow X$ be the associated open subscheme. Consider two quasi-coherent $\O_X$-modules $M,N$, where $M$ is of finite presentation. There is a~canonical isomorphism of quasi-coherent $\O_X$-modules
\[{\varinjlim}_n \HOM(J^{\otimes n} \otimes M,N) \myiso j_* \HOM(j^* M,j^* N).\]
In particular, there is a canonical isomorphism of quasi-coherent $\O_X$-modules
\[{\varinjlim}_n \HOM(J^{\otimes n},N) \myiso j_* j^* N.\]
\end{cor}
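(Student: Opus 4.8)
The plan is to deduce \cref{deligne-sheaf} from the affine-base statement \cref{deligne} by a sheafification argument: construct a canonical morphism between the two quasi-coherent modules and verify it is an isomorphism by evaluating sections over an arbitrary affine open of $X$. First I would note that both sides are quasi-coherent. Since $J$ and $M$ are of finite presentation, so is $J^{\otimes n} \otimes M$, hence each $\HOM(J^{\otimes n} \otimes M, N)$ is quasi-coherent by the recollection above, and a filtered colimit of quasi-coherent modules is again quasi-coherent; on the other side, $j^* M$ is of finite presentation on $X_J$, so $\HOM(j^* M, j^* N)$ is quasi-coherent, and since $X_J$ is quasi-compact by \cref{Xprod} (as $J$ is of finite type) and $X$ is quasi-separated, $j$ is a quasi-compact quasi-separated morphism, so $j_*$ preserves quasi-coherence.

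Next I would build the canonical morphism. On $X_J = X_I$, where $I = \im(e)$, the idal $e$ becomes an epimorphism, hence an isomorphism by \cref{isiso}; thus $j^* e : j^* J \myiso \O_{X_J}$, and consequently $j^*(J^{\otimes n} \otimes M) \myiso j^* M$, compatibly with the transition morphisms $J^{\otimes n} \otimes e$ of \cref{idal-prop}(\ref{transit}). Composing the adjunction unit $\HOM(J^{\otimes n} \otimes M, N) \to j_* j^* \HOM(J^{\otimes n} \otimes M, N) = j_* \HOM(j^*(J^{\otimes n} \otimes M), j^* N)$ with these identifications and passing to the colimit yields a natural morphism
\[\theta : {\varinjlim}_n \HOM(J^{\otimes n} \otimes M, N) \to j_* \HOM(j^* M, j^* N),\]
which on global sections coincides with the map $\rho$ of \cref{deligne}. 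To check $\theta$ is an isomorphism it suffices, both sides being quasi-coherent, to show $\Gamma(V, \theta)$ is bijective for every affine open $V \subseteq X$. As $V$ is affine, $\Gamma(V,-)$ commutes with the filtered colimit of quasi-coherent modules, so $\Gamma\bigl(V, {\varinjlim}_n \HOM(J^{\otimes n} \otimes M, N)\bigr) = {\varinjlim}_n \Hom_{\O_V}\bigl((J^{\otimes n} \otimes M)|_V, N|_V\bigr)$, while $\Gamma\bigl(V, j_* \HOM(j^* M, j^* N)\bigr) = \Hom_{\O_{V \cap X_J}}(M|_{V \cap X_J}, N|_{V \cap X_J})$ by definition, with $V \cap X_J = V_{J|_V}$ by \cref{Xprod}(\ref{XIpull}). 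Since $J|_V \to \O_V$ is a finitely presentable idal on the affine scheme $V$, $M|_V$ is of finite presentation, and $(J|_V)^{\otimes n} \otimes M|_V = (J^{\otimes n} \otimes M)|_V$, \cref{deligne} applied to $V$ identifies the two sides compatibly with $\theta$; hence $\theta$ is an isomorphism. The ``in particular'' statement is the case $M = \O_X$, using $\HOM(\O_X,-) = \id$ and $j^* \O_X = \O_{X_J}$.

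The only step needing genuine care, beyond routine bookkeeping, is setting up the global morphism $\theta$ and checking that it restricts to the maps $\rho$ from \cref{deligne}; once this compatibility is pinned down, quasi-coherence of both sides (so that isomorphy can be tested affine-locally) and the fact that sections over an affine commute with the filtered colimit reduce the statement to the already-established affine case. One small point worth double-checking along the way is that $j$ really is quasi-compact, which uses both that $J$ is of finite type (so $X_J$ is quasi-compact) and that $X$ is quasi-separated.
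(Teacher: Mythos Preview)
Your proof is correct and follows essentially the same approach as the paper: both reduce to checking the isomorphism on affine opens and invoke \cref{deligne} there. Your version is more carefully written, explicitly constructing the comparison morphism $\theta$ via the adjunction unit and justifying the quasi-coherence of both sides, whereas the paper simply chains together the canonical isomorphisms on an affine open $W$ without spelling out the global morphism.
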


\begin{proof}
If $W \subseteq X$ is an affine open subscheme, then by \cref{deligne} we have canonical isomorphisms
\begin{align*}
\Gamma\bigl(W,{\varinjlim}_n \HOM(J^{\otimes n} \otimes M,N)\bigr) & \cong {\varinjlim}_n \Hom\bigl(J|_W^{\otimes n} \otimes M|_W,N|_W\bigr) \\
&  \cong \Hom\bigl(M|_{W \cap X_J},N|_{W \cap X_J}\bigr) \\
& \cong  \Gamma\bigl(W,j_* \HOM(j^* M,j^* N)\bigr). \qedhere
\end{align*}
\end{proof}

The following corollary is of independent interest. It generalizes \cite[Example 2.3]{Gro17} where the scheme $X$ is assumed to be noetherian.

\begin{cor} \label{idalgenerate}
Let $X$ be a quasi-compact quasi-separated scheme and let $X = \bigcup_i X_i$ be an open affine covering. Choose idals $J_i \to \O_X$ of finite presentation such that $X_i = X_{J_i}$. Then the $J_i$ generate $\Q(X)$ as a cocomplete tensor category. Specifically, any quasi-coherent $\O_X$-module is a quotient of a direct sum of tensor powers of the $J_i$.
\end{cor}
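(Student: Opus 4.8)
The plan is to reduce the whole statement to its second sentence: once we know that every $M \in \Q(X)$ is a quotient of a direct sum of tensor powers of the $J_i$, the assertion that the $J_i$ generate $\Q(X)$ as a cocomplete tensor category follows by the usual two-step argument. Namely, pick an epimorphism $P_0 \twoheadrightarrow M$ with $P_0$ a direct sum of tensor powers of the $J_i$, let $K$ be its kernel, pick an epimorphism $P_1 \twoheadrightarrow K$ of the same shape, and observe that $M = \coker(P_1 \to P_0)$ is then a colimit of objects built from the $J_i$ by $\otimes$ (using $J_i^{\otimes 0} = \O_X$) and coproducts. So $M$ lies in the closure of $\{J_i\}$ under tensor products and colimits.

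So fix $M \in \Q(X)$ and, for each index $i$, write $j_i : X_i \hookrightarrow X$ for the open immersion. Since $X_i$ is affine, the quasi-coherent $\O_{X_i}$-module $M|_{X_i}$ is generated by its global sections, i.e.\ by the elements of $\Gamma(X_i, M)$. For each $s \in \Gamma(X_i, M)$, I would apply Deligne's formula in the form of \cref{deligne} with $N = M$ (legitimate because $J_i$ is of finite presentation and $X$ is quasi-compact quasi-separated), so that the canonical map
\[{\varinjlim}_n \Hom(J_i^{\otimes n}, M) \myiso \Gamma(X_i, M)\]
is an isomorphism. Hence $s$ is the image of some genuine global homomorphism $\varphi_s : J_i^{\otimes n(s)} \to M$ with $n(s) \geq 0$, and by the description of the map in \cref{deligne} the restriction $\varphi_s|_{X_i}$ is the composite $J_i^{\otimes n(s)}|_{X_i} \myiso \O_{X_i} \xrightarrow{1 \mapsto s} M|_{X_i}$ (here we use $X_i = X_{J_i}$, so that $e^{\otimes n(s)}$ is an isomorphism over $X_i$).

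Now assemble these into a single morphism
\[\Phi \colon \bigoplus_i \bigoplus_{s \in \Gamma(X_i, M)} J_i^{\otimes n(s)} \longrightarrow M\]
with components the $\varphi_s$; this is a well-defined morphism of quasi-coherent $\O_X$-modules (the index set is a genuine set, and one may take finitely many $i$ since $X$ is quasi-compact, though this is not needed). I claim $\Phi$ is an epimorphism. Epimorphisms in $\Q(X)$ can be checked on stalks, and every point of $X$ lies in some $X_i$, so it suffices that each $\Phi|_{X_i}$ be an epimorphism. But the summands of $\Phi|_{X_i}$ include, for every $s \in \Gamma(X_i, M)$, the map $\varphi_s|_{X_i} \colon \O_{X_i} \to M|_{X_i}$, $1 \mapsto s$, and the images of these already generate $M|_{X_i}$ by the choice of generators above. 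Thus $\Phi|_{X_i}$ is surjective, $\Phi$ is an epimorphism, and $M$ is exhibited as a quotient of a direct sum of tensor powers of the $J_i$.

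There is essentially no obstacle here beyond the lifting step, which is also the conceptual heart of the statement and the reason idals rather than ideals are the right notion: \cref{deligne} is precisely what guarantees that a section of $M$ over the \emph{open} subscheme $X_i$ extends to a morphism out of a tensor power of $J_i$ that is defined on all of $X$, and it does so without any noetherian hypothesis.
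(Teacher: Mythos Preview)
Your proof is correct and follows essentially the same approach as the paper: both lift local sections over $X_i$ to global morphisms $J_i^{\otimes n} \to M$ via \cref{deligne} and assemble these into an epimorphism from a direct sum of tensor powers of the $J_i$. The only difference is cosmetic: the paper first reduces to $M$ of finite type (so that finitely many generators over each $X_i$ suffice and a single exponent $n_i$ works per $i$), whereas you bypass this reduction by taking all of $\Gamma(X_i,M)$ as the index set and allowing a separate exponent $n(s)$ for each section.
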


\begin{proof}
Let $M \in \Q(X)$. If $F$ denotes the set of all quasi-coherent submodules of $M$ of finite type, then the canonical homomorphism $\bigoplus_{N \in F} N \to M$ is an epimorphism \cite[Corollaire 6.9.9]{EGAI}. Hence, we may assume that $M$ is itself of finite type. Since each $X_i$ is affine, there is a finite set $S_i$ and an epimorphism $\O_X^{\oplus S_i} |_{X_i} \twoheadrightarrow M|_{X_i}$. By \cref{deligne} it extends to a homomorphism $(J_i^{\otimes n_i})^{\oplus S_i} \to M$ for some $n_i \geq 0$, which is thus an epimorphism when restricted to $X_i$. Thereby we obtain an epimorphism $\bigoplus_i (J_i^{\otimes n_i})^{\oplus S_i} \twoheadrightarrow M$.
\end{proof}

We also record the following interesting corollary, which foreshadows \cref{intro-1} (but is strictly weaker). It was already obtained in \cite[Lemma 5.11.14]{Bra14} with different methods.

\begin{cor} \label{boxpres}
Let $X,Y$ be two quasi-compact quasi-separated $\IK$-schemes, where $\IK$ is any commutative ring. Then every $M \in \Q(X \times_\IK Y)$ admits a presentation of the form
\[A' \boxtimes B' \to A \boxtimes B \to M \to 0,\]
for $A,A' \in \Q(X)$ and $B,B' \in \Q(Y)$, where by definition $A \boxtimes B \coloneqq p_X^*(A) \otimes p_Y^*(B)$. If~$M$ is of finite presentation, then $A,A',B,B'$ can also be chosen to be of finite presentation.
\end{cor}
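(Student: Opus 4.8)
The plan is to deduce everything from \cref{idalgenerate} together with one reduction step. First I would establish the following claim, which is the surjection part of \cref{boxpres}.

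\emph{Claim.} Every $M \in \Q(X \times_\IK Y)$ is a quotient of some $A \boxtimes B$ with $A \in \Q(X)$ and $B \in \Q(Y)$, and if $M$ is of finite type then $A,B$ may be chosen of finite presentation.

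To prove the Claim I would choose finite open affine coverings $X = \bigcup_{i=1}^{r} X_i$ and $Y = \bigcup_{j=1}^{s} Y_j$ and, using \cref{idals-qcqs}, finitely presentable idals $e_i : J_i \to \O_X$ and $f_j : K_j \to \O_Y$ with $X_i = X_{J_i}$ and $Y_j = Y_{K_j}$. Pulling these back and forming the idal product, $L_{ij} \coloneqq p_X^{*}(J_i) \otimes p_Y^{*}(K_j) = J_i \boxtimes K_j \to \O_{X \times_\IK Y}$ is again an idal (by \cref{idal-prop} the pullbacks are idals, and the idal product of idals is an idal), of finite presentation; moreover by \cref{Xprod} one has $(X \times_\IK Y)_{L_{ij}} = p_X^{-1}(X_i) \cap p_Y^{-1}(Y_j) = X_i \times_\IK Y_j$, which is affine, and these cover $X \times_\IK Y$. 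Since $p_X^{*}$ and $p_Y^{*}$ are tensor functors, every tensor power of $L_{ij}$ is again of box form, $L_{ij}^{\otimes n} \cong J_i^{\otimes n} \boxtimes K_j^{\otimes n}$. Applying \cref{idalgenerate} to $X \times_\IK Y$ with this affine covering and these idals presents $M$ as a quotient of a direct sum $\bigoplus_{t} L_{i_t j_t}^{\otimes n_t} \cong \bigoplus_{t} (J_{i_t}^{\otimes n_t} \boxtimes K_{j_t}^{\otimes n_t})$; and when $M$ is of finite type, the proof of \cref{idalgenerate} produces such a direct sum with finitely many summands, each of finite presentation (as the $J_i$ and $K_j$ are).

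The only step needing a genuine argument is the passage from a direct sum $\bigoplus_t (A_t \boxtimes B_t)$ to a single box product. For this I would use that, since $\otimes$ and the functors $p_X^{*},p_Y^{*}$ are cocontinuous, setting $A \coloneqq \bigoplus_t A_t$ and $B \coloneqq \bigoplus_t B_t$ gives $A \boxtimes B \cong \bigoplus_{t,t'} (A_t \boxtimes B_{t'})$, and the sub-coproduct over the diagonal $t = t'$ is a direct summand (using additivity of $\Q$). Hence $\bigoplus_t (A_t \boxtimes B_t)$ is a retract of $A \boxtimes B$, so $M$ is a quotient of $A \boxtimes B$; if the index set is finite with all $A_t,B_t$ of finite presentation, then so are $A$ and $B$. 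This proves the Claim.

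Finally, \cref{boxpres} follows by applying the Claim twice: pick an epimorphism $\pi : A \boxtimes B \to M$, put $K \coloneqq \ker \pi$, apply the Claim to $K$ to get an epimorphism $A' \boxtimes B' \to K$, and compose with $K \hookrightarrow A \boxtimes B$ to obtain the exact sequence $A' \boxtimes B' \to A \boxtimes B \to M \to 0$. If $M$ is of finite presentation it is of finite type, so $A,B$ are of finite presentation, whence $A \boxtimes B = p_X^{*}A \otimes p_Y^{*}B$ is of finite presentation; then $K = \ker(A \boxtimes B \to M)$ is of finite type (the kernel of a surjection from a finitely presentable module onto a finitely presentable one, checked on an affine cover), and a second application of the Claim to $K$ yields $A',B'$ of finite presentation. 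I do not expect any serious obstacle here; the points that need care are the finite-presentation bookkeeping and the diagonal-summand observation, both of which are routine.
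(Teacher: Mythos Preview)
Your proof is correct and follows essentially the same approach as the paper: both choose finite affine covers of $X$ and $Y$ via finitely presentable idals, apply \cref{idalgenerate} to the resulting box-product idals on $X \times_\IK Y$, pass from a direct sum $\bigoplus_t (A_t \boxtimes B_t)$ to a single $A \boxtimes B$ with $A = \bigoplus_t A_t$, $B = \bigoplus_t B_t$ via the diagonal-summand projection, and then iterate on the kernel. The paper states the epimorphism $A \boxtimes B \twoheadrightarrow \bigoplus_t (A_t \boxtimes B_t)$ without further comment, whereas you spell out the diagonal-retract reason; otherwise the arguments coincide.
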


\begin{proof}
Let $ X = \bigcup_i X_i$, $Y = \bigcup_j Y_j$ be open affine coverings and choose ideals $I_i \to \O_X$, $J_j \to \O_Y$ of finite presentation with $ X_i = X_{I_i}$ and $Y_j = Y_{J_j}$. Then $X \times_\IK Y = \bigcup_{i,j} (X_i \times_\IK Y_j)$ is an open affine covering, and we have $X_i \times_\IK Y_j = (X \times_\IK Y)_{I_i \boxtimes J_j}$ by \cref{Xprod}. If $M \in \Q(X \times_\IK Y)$, then by \cref{idalgenerate} there is an epimorphism of a direct sum of tensor powers of the $I_i \boxtimes J_j$ onto $M$. In particular, there is a set $S$ and $A_s \in \Q_{\fp}(X)$, $B_s \in \Q_{\fp}(Y)$ for $s \in S$ with an epimorphism $\bigoplus_{s \in S} (A_s \boxtimes B_s) \twoheadrightarrow M$. If $M$ is of finite type, $S$ can be chosen to be finite. Let $A \coloneqq \bigoplus_{s \in S} A_s$ and $B \coloneqq \bigoplus_{s \in S} B_s$. Then there is an epimorphism $A \boxtimes B \twoheadrightarrow \bigoplus_{s \in S} (A_s \boxtimes B_s)$. Hence, there is an epimorphism $A \boxtimes B \twoheadrightarrow M$. Now we apply the same procedure to its kernel (which is of finite type if $M$ is of finite presentation) to obtain the desired presentation.
\end{proof}

\section{Localizations associated to idals}    
\label{sec:tensorloc}

In this section we will find an analogue of the open subscheme $X_J \hookrightarrow X$ for cocomplete tensor categories. That is, for an idal $J$ of a well-behaved cocomplete tensor category $\C$ we are going to construct a well-behaved cocomplete tensor category $\C_J$ equipped with a cocontinuous tensor functor $\C \to \C_J$. In the case $\C=\Q(X)$ we expect that
\[\Q(X)_J \simeq \Q(X_J).\]
Also, $\C \to \C_J$ should enjoy a universal property which is similar to the one of $X_J \to X$ in \cref{ueopen}, namely that it is the universal solution to the problem of inverting $J \to \O_X$. We refer to \cite{Bra14} for plenty of examples of constructions from schemes which can be transported to cocomplete tensor categories, including their universal properties.

We start off by giving a more concrete description of $\Q(X_J)$, using Deligne's formula from \cref{sec:deligne}.

\begin{prop} \label{deligne-cat}
Let $X$ be a quasi-compact quasi-separated scheme and let $J \to \O_X$ be a~quasi-coherent idal of finite presentation. Then there is an equivalence of categories between $\Q(X_J)$ and the category of those $M \in \Q(X)$ such that the canonical homomorphism
\[M \to \HOM(J,M)\]
is an isomorphism, i.e.\ those $M$ who ``believe'' that $J \to \O_X$ is an isomorphism.
\end{prop}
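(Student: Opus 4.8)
The plan is to exhibit the claimed equivalence by means of the pair of adjoint functors $j^* : \Q(X) \rightleftarrows \Q(X_J) : j_*$, where $j : X_J \hookrightarrow X$ is the open immersion. Since $j$ is an open immersion, $j^*$ is a localization (it is a Gabriel localization in the abelian setting), $j^* j_* \cong \id$, and the essential image of $j_*$ is a full reflective subcategory of $\Q(X)$ whose reflector is $j_* j^*$. Thus $\Q(X_J)$ is equivalent to the full subcategory of $\Q(X)$ consisting of those $M$ for which the unit $M \to j_* j^* M$ is an isomorphism. The whole content of the proposition is therefore to identify this full subcategory with $\{M \in \Q(X) : M \to \HOM(J,M) \text{ is an isomorphism}\}$.

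The bridge between the two descriptions is Deligne's formula in the sheafy form of \cref{deligne-sheaf}: there is a canonical isomorphism ${\varinjlim}_n \HOM(J^{\otimes n},M) \myiso j_* j^* M$, compatible with the canonical maps from $M$. So I would argue as follows. First, the transition maps in the colimit ${\varinjlim}_n \HOM(J^{\otimes n},M)$ are, up to the identification $\HOM(J^{\otimes n} \otimes J, M) \cong \HOM(J^{\otimes n}, \HOM(J,M))$, obtained by applying $\HOM(J^{\otimes n},-)$ to the canonical map $M \to \HOM(J,M)$ — this uses the idal structure and \cref{idal-prop}(\ref{transit}) to see the maps are the expected ones. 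Consequently, if $M \to \HOM(J,M)$ is an isomorphism, then every transition map is an isomorphism, so $M \myiso {\varinjlim}_n \HOM(J^{\otimes n},M) \myiso j_* j^* M$, i.e.\ $M$ lies in the essential image of $j_*$. Conversely, if $M \cong j_* N$ for some $N \in \Q(X_J)$, then since $J|_{X_J} \to \O_{X_J}$ is an isomorphism, adjunction gives $\HOM(J, j_* N) \cong j_* \HOM(j^* J, N) \cong j_* N \cong M$, and one checks this identification is induced by the canonical map $M \to \HOM(J,M)$; hence that map is an isomorphism. (Here I use that $J$ is of finite presentation so that $\HOM(J,-)$ commutes with $j_*$ and is itself quasi-coherent, as in the Recollection preceding \cref{deligne-sheaf}.) These two implications show the full subcategory in question coincides with the essential image of $j_*$, which is equivalent to $\Q(X_J)$.

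The step I expect to be the main obstacle is the bookkeeping needed to verify that the canonical map $M \to \HOM(J,M)$ is literally the "same" map that appears both as a transition map in Deligne's colimit and as the comparison arrow under the adjunction isomorphism $\HOM(J, j_* N) \cong j_* N$ — i.e.\ checking the compatibility of all the canonical morphisms rather than merely the existence of abstract isomorphisms. Everything else (reflectivity of the essential image of $j_*$, the fact that $j^* j_* \cong \id$, quasi-coherence of $\HOM(J,-)$) is standard. Once these naturality checks are in place, the equivalence $\Q(X_J) \simeq \{M : M \myiso \HOM(J,M)\}$ follows formally, and this is exactly the subcategory that will serve, in the next section, as the underlying category of the localization $\C_J$ in the case $\C = \Q(X)$.
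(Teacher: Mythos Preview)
Your proof is correct and shares the paper's overall architecture: identify $\Q(X_J)$ with the essential image of $j_*$ via the unit $M \to j_* j^* M$, then invoke \cref{deligne-sheaf} to rewrite $j_* j^* M$ as $\varinjlim_n \HOM(J^{\otimes n},M)$. Your forward implication is essentially the paper's.

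The converse implication is where you diverge. You take $M \cong j_* N$ and use the internal-hom adjunction $\HOM(J, j_* N) \cong j_* \HOM(j^* J, N) \cong j_* N$, since $j^* J \to \O_{X_J}$ is an isomorphism by definition of $X_J$. The paper instead stays with the colimit description and computes directly that
\[\HOM\bigl(J, \varinjlim_n \HOM(J^{\otimes n}, M)\bigr) \cong \varinjlim_n \HOM(J^{\otimes (n+1)}, M) \cong \varinjlim_n \HOM(J^{\otimes n}, M),\]
using that $\HOM(J,-)$ preserves filtered colimits (finite presentation of $J$) and the idal property to match transition maps. Your route is shorter for this proposition; incidentally, the isomorphism $\HOM(J, j_* N) \cong j_* \HOM(j^* J, N)$ is a formal consequence of $j^*$ being monoidal with right adjoint $j_*$ and requires no finiteness on $J$, contrary to your parenthetical. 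The payoff of the paper's longer computation is that it is precisely the argument that survives in the abstract setting of \cref{locex}, where one has no pushforward $j_*$ to lean on and must build the reflector $R_J$ as this colimit from scratch.
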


\begin{proof}
Let $j : X_J \to X$ be the open immersion. Since $j^*$ is left adjoint to the fully faithful functor $j_*$, this adjunction restricts to an equivalence between $\Q(X_J)$ and the category of those $M \in \Q(X)$ such that the unit morphism $M \to j_* j^* M$ is an isomorphism. By \cref{deligne-sheaf} we have
\[j_* j^* M \cong {\varinjlim}_n \HOM(J^{\otimes n},M).\]
Now let $M \in \Q(X)$ be such that the canonical homomorphism $M \to \HOM(J,M)$ is an isomorphism. Then we obtain isomorphisms
\[M \myiso \HOM(J,M) \myiso \HOM\bigl(J,\HOM(J,M)\bigr) \myiso \HOM(J^{\otimes 2},M).\]
In fact, by induction, it follows that the canonical morphism $M \to \HOM(J^{\otimes n},M)$ is an isomorphism for all $n \in \IN$, and hence that $M \to {\varinjlim}_n \HOM(J^{\otimes n},M)$ is an isomorphism as well.

For the converse, we observe
\begin{align*}
\HOM\bigl(J,{\varinjlim}_n \HOM(J^{\otimes n},M)\bigr)&  \cong {\varinjlim}_n \HOM\bigl(J,\HOM(J^{\otimes n},M)\bigr) \\
& \cong {\varinjlim}_n \HOM(J^{\otimes (n+1)},M) \\
& \cong {\varinjlim}_n \HOM(J^{\otimes n},M).
\end{align*}
Here we have used that $J \to \O_X$ is an idal in order to ensure that the transition maps agree and that this isomorphism
\[\HOM\bigl(J,{\varinjlim}_n \HOM(J^{\otimes n},M)\bigr) \myiso {\varinjlim}_n \HOM(J^{\otimes n},M)\]
is the one induced by $J \to \O_X$.
\end{proof}

We will now transport this construction to cocomplete tensor categories. However, we prefer an abstract definition of $\C_J$ which is motivated by the universal property of $X_J$ in \cref{ueopen}, and \cref{deligne-cat} will motivate its construction. Here $\Hom_{\c\otimes}$ denotes the category of cocontinuous tensor functors.

\begin{defi}
Let $\C$ be a cocomplete tensor category and let $J \to \O_\C$ be an idal in $\C$. Then $\C_J$ is defined to be a cocomplete tensor category equipped with natural equivalences of categories
\[\Hom_{\c\otimes}(\C_J,\D) \myiso \bigl\{F \in \Hom_{\c\otimes}(\C,\D) : F(J) \to F(\O_\C) \text{ is an isomorphism}\bigr\}\]
for cocomplete tensor categories $\D$. Thus, $\C_J$ is the localization of $\C$ at $J \to \O_\C$ in the $2$-category of cocomplete tensor categories.

In more concrete terms, this means that we have a cocontinuous tensor functor
\[R_J : \C \to \C_J\]
which maps $J \to \O_\C$ to an isomorphism in $\C_J$ and has the following bicategorical universal property: If $F : \C \to \D$ is a cocontinuous tensor functor which maps $J \to \O_\C$ to an isomorphism in $\D$, then there is a cocontinuous tensor functor $\widetilde{F} : \C_J \to \D$ with $\widetilde{F} \circ R_J \cong F$. Moreover, if $G,H : \C_J \to \D$ are two cocontinuous tensor functors, then every morphism of tensor functors $G \circ R_J \to H \circ R_J$ is induced by a unique morphism of tensor functors $G \to H$.
\end{defi}

\begin{recol}
In the following we need to work with locally presentable categories \cite{AR94,GU71}, in particular to show the existence of $\C_J$. We recall that a \emph{locally presentable tensor category} is a cocomplete tensor category whose underlying category is locally presentable. These are automatically closed by Freyd's special adjoint functor theorem \cite[Remark~3.1.17]{Bra14}; the internal hom-objects will be denoted by $\HOM$. A \emph{locally finitely presentable tensor category} is a cocomplete tensor category $\C$ whose underlying category is locally finitely presentable and such that the finitely presentable objects are closed under finite tensor products \cite{Kel82}. In particular, it is required that the unit object $\O_\C$ is finitely presentable. It follows that for every finitely presentable object $I \in \C$ the functor $\HOM(I,-) : \C \to \C$ preserves filtered colimits. For example, if $X$ is a quasi-compact quasi-separated scheme, then $\Q(X)$ is a~locally finitely presentable tensor category, and the finitely presentable objects are precisely the quasi-coherent $\O_X$-modules of finite presentation in the usual sense \cite[Section 2.3]{Bra14}. 
\end{recol}

\begin{thm}\noindent \label{locex}
\begin{enu}
\item If $\C$ is a locally presentable tensor category and $J \to \O_\C$ is an idal in $\C$, then $\C_J$ exists and is again a locally presentable tensor category. Its underlying category is the reflective subcategory of $\C$ consisting of those $M \in \C$ such that the canonical morphism
\[M \to \HOM(J,M)\]
is an isomorphism. We denote the reflector by $R_J : \C \to \C_J$.
\item If $\C$ is a locally finitely presentable tensor category and $J \to \O_\C$ is a finitely presentable idal in $\C$, then $\C_J$ is also a locally finitely presentable tensor category, and $R_J : \C \to \C_J$ preserves finitely presentable objects. Moreover, we have $R_J \cong {\varinjlim}_n \HOM(J^{\otimes n},-)$ as functors.
\item If $\C=\Q(X)$ for some quasi-compact quasi-separated scheme and $J \to \O_X$ is a quasi-coherent idal of finite presentation, then
\[\Q(X)_J \simeq \Q(X_J).\]
Here, $R_J : \Q(X) \to \Q(X_J)$ is the restriction functor.
\end{enu}
\end{thm}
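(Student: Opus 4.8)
The plan is to handle the three parts in turn, part~(1) carrying the main weight. For part~(1) I would let $\C_J \subseteq \C$ denote the full subcategory of those $M$ for which $M \myiso \HOM(\O_\C,M) \to \HOM(J,M)$ is an isomorphism. By adjunction, $M$ lies in $\C_J$ if and only if $M$ is orthogonal to every morphism $e \otimes A : J \otimes A \to A$; and since $\otimes$ is cocontinuous and every object is canonically a colimit of a fixed set of generators, $\C_J$ is the orthogonal complement of the \emph{set} $\{J \otimes G \to G : G \text{ a generator}\}$, hence a locally presentable reflective subcategory of $\C$; write $R_J : \C \to \C_J$ for the reflector. To upgrade this to a monoidal localization I would observe that for $M \in \C_J$ one has $\HOM(A,M) \myiso \HOM(A,\HOM(J,M)) \cong \HOM(J \otimes A,M) \cong \HOM(J,\HOM(A,M))$, so $\C_J$ is closed under $\HOM(A,-)$ for every $A \in \C$; Day's reflection theorem then produces a unique closed symmetric monoidal structure on $\C_J$ with $M \otimes_{\C_J} N := R_J(M \otimes_\C N)$ and $R_J$ a cocontinuous tensor functor. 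Finally, a cocontinuous tensor functor $F : \C \to \D$ inverts $J \to \O_\C$ if and only if it inverts each $J \otimes A \to A$ (since $F(J) \to F(\O_\C)$ being invertible forces $F(J) \otimes F(A) \to F(A)$ to be invertible), so the universal property of $R_J$ among cocontinuous functors refines to the claimed localization in the $2$-category of cocomplete tensor categories; the factorization $\widetilde F : \C_J \to \D$ is automatically a tensor functor because $\widetilde F \circ R_J \cong F$ is one and $R_J$ is a strong monoidal localization, and the $2$-dimensional part is formal.

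For part~(2) I additionally assume $\C$ is locally finitely presentable and $J$ (hence each $J^{\otimes n}$) finitely presentable, so that each $\HOM(J^{\otimes n},-)$ commutes with filtered colimits. I would then identify $R_J$ with $T := \varinjlim_n \HOM(J^{\otimes n},-)$, the colimit taken over the transition maps induced by $J^{\otimes(n+1)} \to J^{\otimes n}$ of \cref{idal-prop}(\ref{transit}). As in the proof of \cref{deligne-cat}, the idal property makes these transition maps compatible and shows both that $T(M) \in \C_J$ for every $M$ (apply $\HOM(J,-)$, which commutes with the filtered colimit, and reindex) and that $M \to T(M)$ is an isomorphism for $M \in \C_J$; together with $T\eta = \eta T$ this exhibits $T$ as an idempotent monad whose fixed subcategory is $\C_J$, so $T \cong R_J$. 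Since $T$ preserves filtered colimits, $\C_J$ is closed under filtered colimits in $\C$, so for finitely presentable $G$ the functor $\Hom_{\C_J}(R_J G,-) \cong \Hom_\C(G,-)|_{\C_J}$ preserves filtered colimits, i.e.\ $R_J G$ is finitely presentable; as the $R_J G$ generate $\C_J$ under colimits, $\C_J$ is locally finitely presentable, and its finitely presentable objects are closed under $\otimes_{\C_J}$ because $R_J G \otimes_{\C_J} R_J G' \cong R_J(G \otimes_\C G')$ with $G \otimes_\C G'$ finitely presentable in $\C$.

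For part~(3), with $\C = \Q(X)$ and $U = X_J$, part~(1) identifies the underlying category of $\Q(X)_J$ with $\{M \in \Q(X) : M \myiso \HOM(J,M)\}$, which by \cref{deligne-cat} is equivalent to $\Q(X_J)$ via the restriction functor $j^*$ with quasi-inverse $j_*$. By part~(2) and \cref{deligne-sheaf} we have $R_J \cong \varinjlim_n \HOM(J^{\otimes n},-) \cong j_* j^*$, so under this equivalence $R_J$ corresponds to the cocontinuous tensor functor $j^* : \Q(X) \to \Q(X_J)$; and since the tensor product on $\Q(X)_J$ is $R_J(- \otimes_{\Q(X)} -)$, applying $j^*$ shows the equivalence $\Q(X)_J \simeq \Q(X_J)$ is monoidal. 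Hence $\Q(X)_J \simeq \Q(X_J)$ as cocomplete tensor categories with $R_J$ the restriction functor, and the bicategorical universal property is inherited from part~(1).

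The step I expect to be most delicate is the monoidal part of~(1): passing carefully from a reflective localization of a locally presentable category to a monoidal localization (the invocation of Day's reflection theorem, and the verification that factorizations of cocontinuous tensor functors remain tensor functors), together with the set-theoretic reduction of the orthogonality class. The idal hypothesis is used in earnest only in part~(2), for the identification $R_J \cong \varinjlim_n \HOM(J^{\otimes n},-)$, and there the computation is essentially the one already carried out in \cref{deligne-cat}; part~(3) is then bookkeeping.
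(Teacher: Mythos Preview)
Your proposal is correct and follows essentially the same route as the paper. In part~(1) you spell out the reflectivity via an orthogonality-class argument and invoke Day's reflection theorem for the monoidal structure, whereas the paper simply cites \cite[Theorem~5.8.12, Remark~5.8.13]{Bra14} for the general tensor-categorical localization and then writes down the same tensor product $R_J(M\otimes N)$ and the same check of the universal property; the content is the same, you are just more self-contained. Parts~(2) and~(3) match the paper almost line for line: the identification $R_J \cong \varinjlim_n \HOM(J^{\otimes n},-)$ via the idal property, closure of $\C_J$ under filtered colimits, preservation of finitely presentable objects by $R_J$, and the appeal to \cref{deligne-cat} (and \cref{deligne-sheaf}) for the geometric case are exactly what the paper does.
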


\begin{proof}
1.\ The general construction of tensor categorical localization was explained for any set of parallel morphisms in \cite[Theorem 5.8.12, Remark 5.8.13]{Bra14}, but has probably been known before. Explicitly, one defines the full subcategory
\[\C_J \coloneqq \bigl\{M \in \C : M \to \HOM(J,M) \text{ is an isomorphism}\bigr\}.\]
The reflector $R_J : \C \to \C_J$ is constructed in two steps, the first one ensuring that the morphism $M \to \HOM(J,M)$ becomes a monomorphism, the second one that $M \to \HOM(J,M)$ becomes an isomorphism. This special case also appeared in \cite[Example 5.8.17]{Bra14}, but the description of the reflector there is not quite correct in general. At least, we will see in 2.\ below that the description works in the setting of locally finitely presentable tensor categories.

Next, $\C_J$ becomes a locally presentable tensor category in such a way that the reflector $R_J : \C \to \C_J$ becomes a cocontinuous tensor functor. The unit object is $R_J(\O_\C)$, and the tensor product of two objects $M,N \in \C_J$ is defined by $R_J(M \otimes N)$. The colimit of a diagram in $\C_J$ is the reflection of the colimit of the underlying diagram in $\C$.

Let us briefly check the universal property. The idal $R_J(J) \to R_J(\O_\C) = \O_{\C_J}$ is an isomorphism, because for every $M \in \C_J$ the induced morphism
\[\HOM(\O_{\C_J},M) \to \HOM(R_J(J),M)\]
identifies with the isomorphism $M \to \HOM(J,M)$. If $\D$ is a cocomplete tensor category and $F : \C \to \D$ is a cocontinuous tensor functor which maps the idal $J \to \O_\C$ to an isomorphism, then the functor $F' \coloneqq F|_{\C_J} : \C_J \to \D$ is cocontinuous and admits the structure of a tensor functor with $F' \circ R_J \cong F$.

2.\ Let $M \in \C$. Using the natural maps $J^{\otimes n} \to J^{\otimes m}$ for $n \geq m$, we can define the colimit
\[R_J(M) \coloneqq {\varinjlim}_n \HOM(J^{\otimes n},M)\]
with a natural map $M \to R_J(M)$. A calculation similar to the proof of \cref{deligne-cat} shows that $R_J(M) \in \C_J$. Here, we use that $\HOM(J,-)$ preserves filtered colimits and also the idal property to ensure that the transition maps fit together. If $M \to N$ is a morphism in $\C$ with $N \in \C_J$, then $N \to R_J(N)$ is an isomorphism, so that we obtain a morphism $R_J(M) \to R_J(N) \myiso N$, which is in fact the unique morphism $R_J(M) \to N$ extending $M \to N$. Hence, $R_J : \C \to \C_J$ is, indeed, the reflector. Since $\HOM(J,-)$ preserves filtered colimits, it is clear that $\C_J$ is closed under filtered colimits in $\C$. In particular, the inclusion functor $\C_J \hookrightarrow \C$ preserves filtered colimits. Now a formal argument \cite[Exercise 1.s]{AR94} shows that its left adjoint $R_J : \C \to \C_J$ preserves finitely presentable objects. It follows that $\C_J$ is a locally finitely presentable category, and that the finitely presentable objects of $\C_J$ are precisely the retracts of those $R_J(M)$, where $M \in \C$ is a finitely presentable object. From this we deduce that the finitely presentable objects in $\C_J$ are closed under finite tensor products.

3.\ This follows from the construction in 1.\ as well as \cref{deligne-cat}.
\end{proof}

\begin{rem} \label{fcapproach}
Since $\C_J$ can also be seen as a bicategorical coinverter, its existence in the locally finitely presentable case and its description as a full subcategory of $\C$ also follow from \cite[Proposition 5.2, Lemma 5.3, Proposition 5.13]{Bra20}. This even works for arbitrary morphisms $J \to \O_\C$ with $J \in \C_{\fp}$. In this generality, the construction can be divided into two steps, the first one being a bicategorical coequifer \cite[Proposition 5.12]{Bra20} which universally turns $J \to \O_\C$ into an idal, the second one being the localization at an idal.

From the proof in loc.cit.\ we also see that actually $\C_J = \Ind((\C_{\fp})_J)$ for a localization $(\C_{\fp})_J$ of $\C_{\fp}$ in the $2$-category of essentially small finitely cocomplete tensor categories. It follows in particular $\Q_{\fp}(X)_J \simeq \Q_{\fp}(X_J)$.

This prompts the question why we do not consistently work in that more basic $2$-category, also in order to avoid switching back and forth between $\Q(X)$ and $\Q_{\fp}(X)$ as we do later. The (interconnected) reasons are the following: (1) infinite colimits are very convenient, (2) we will need finite limits in \cref{global-glue} below, which do not necessarily exist in $\Q_{\fp}(X)$, (3) usually there is no pushforward functor $\Q_{\fp}(X_J) \to \Q_{\fp}(X)$, (4) the localization of an essentially small finitely cocomplete tensor category cannot be realized as a full subcategory it, (5) probably there is no direct proof of $\Q_{\fp}(X)_J \simeq \Q_{\fp}(X_J)$ without repeating the arguments in Deligne's formula (\cref{deligne}) and thus implicitly using $\Q(X)$ anyway.
\end{rem}

\begin{rem}\label{formalprop}
Here we list some formal properties of $\C_J$.
\begin{enu}
\item 
If $\C$ is $\IK$-linear for some commutative ring $\IK$, then $\C_J$ will also be $\IK$-linear, and the universal property holds in the $2$-category of cocomplete $\IK$-linear tensor categories and cocontinuous $\IK$-linear tensor functors.
\item If $F : \C \to \D$ is a cocontinuous tensor functor, then it induces a cocontinuous tensor functor $F_J : \C_J \to \D_{F(J)}$ characterized by $F_J \circ R_J \cong R_{F(J)} \circ F$. Moreover, if $\C,\D$ are locally finitely presentable tensor categories and $F$ preserves finitely presentable objects, then the same is true for $F_J$.
\item\label{idalpull} Any morphism of idals $f : J \to J'$ in $\C$ induces a cocontinuous tensor functor $f^*: \C_{J'} \to \C_J$ characterized by $f^* \circ R_{J'} \cong R_J$. This is because $R_J$ maps $J \to J' \to \O_\C$ and hence, by \cref{isiso}, also $J' \to \O_\C$ to an isomorphism.
\end{enu}
\end{rem}
 
Next, we treat iterated localizations. The result is a categorification of the well-known result from commutative algebra.

\begin{prop} \label{twoidal}
Let $\C$ be a locally presentable tensor category. Let $I \to \O_\C \leftarrow J$ be two idals in $\C$. There is a diagram of cocontinuous tensor functors
\[\begin{tikzcd}[row sep=6ex, column sep=6ex] \C \ar{r}{R_I} \ar{d}[swap]{R_J} \ar{dr}[description]{R_{I \otimes J}} & \C_I \ar{d}{\overline{R}_J} \\ \C_J \ar{r}[swap]{\overline{R}_I} & \C_{I \otimes J} \end{tikzcd}\]
which commutes up to isomorphisms. Besides, $\smash{\overline{R}_I}$ and $\smash{\overline{R}_J}$ induce equivalences of cocomplete tensor categories
\[(\C_J)_{R_J(I)} \simeq \C_{I \otimes J} \simeq (\C_I)_{R_I(J)}.\]
The corresponding statement also holds in the linear case.
\end{prop}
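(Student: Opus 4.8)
The plan is to prove everything directly from the defining bicategorical universal properties of the localizations, the only genuine input being \cref{isiso}. First I would isolate the following observation about an arbitrary cocontinuous tensor functor $F : \C \to \D$: the morphism $F(I \otimes J) \to F(\O_\C) \myiso \O_\D$ is an isomorphism \emph{if and only if} both $F(I) \to \O_\D$ and $F(J) \to \O_\D$ are isomorphisms. For the ``if'' direction, $F(e \otimes f)$ identifies with (the composite of $\O_\D \otimes \O_\D \myiso \O_\D$ and) $F(e) \otimes F(f)$, a tensor product of isomorphisms. For ``only if'', $F(e \otimes f)$ factors as $F(I) \otimes F(J) \xrightarrow{\,F(I) \otimes F(f)\,} F(I) \otimes \O_\D \myiso F(I) \xrightarrow{\,F(e)\,} \O_\D$; hence if it is an isomorphism, then $F(e)$ is a split epimorphism, thus a regular epimorphism, and since $F(e)$ is an idal by \cref{idal-prop}(3), \cref{isiso}(1) forces $F(e)$ to be an isomorphism, and symmetrically $F(f)$. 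This is the crux of the proposition; the remaining steps are formal.

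Next I would construct the commutative square. Applying the observation to $F = R_{I \otimes J}$, which by construction inverts $I \otimes J \to \O_\C$, shows that $R_{I \otimes J}$ also inverts $I \to \O_\C$ and $J \to \O_\C$. The universal property of $R_I$ (resp.\ $R_J$) then produces cocontinuous tensor functors $\overline{R}_J : \C_I \to \C_{I \otimes J}$ and $\overline{R}_I : \C_J \to \C_{I \otimes J}$ with $\overline{R}_J \circ R_I \cong R_{I \otimes J}$ and $\overline{R}_I \circ R_J \cong R_{I \otimes J}$; in particular $\overline{R}_J \circ R_I \cong \overline{R}_I \circ R_J$, so we obtain the asserted diagram with both inner triangles commuting up to isomorphism.

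It remains to identify $\C_{I \otimes J}$, equipped with $\overline{R}_I : \C_J \to \C_{I \otimes J}$, with the localization $(\C_J)_{R_J(I)}$; the identification with $(\C_I)_{R_I(J)}$ is symmetric. First, $\overline{R}_I$ inverts $R_J(I) \to \O_{\C_J}$: indeed $\overline{R}_I(R_J(I)) \cong R_{I \otimes J}(I)$, and $R_{I \otimes J}(e)$ is an isomorphism by the first paragraph. For the universal property, fix a cocomplete tensor category $\D$. Precomposition with $R_J$ identifies $\Hom_{\c\otimes}(\C_J,\D)$ with the full subcategory of $\Hom_{\c\otimes}(\C,\D)$ of functors inverting $J \to \O_\C$, and under this identification ``$G$ inverts $R_J(I)$'' becomes ``$F := G \circ R_J$ inverts $I$'', which by the first paragraph is equivalent to ``$F$ inverts $I \otimes J$'', i.e.\ to the datum of an object of $\Hom_{\c\otimes}(\C_{I \otimes J},\D)$. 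Composing these equivalences — and, on $2$-cells, running the same chain through the universal properties of $R_J$ and $R_{I \otimes J}$ to see that morphisms of tensor functors out of $\C_{I \otimes J}$ correspond to morphisms of tensor functors out of $\C_J$ compatibly with whiskering by $\overline{R}_I$ — exhibits $\overline{R}_I$ as the localization of $\C_J$ at $R_J(I)$, i.e.\ $(\C_J)_{R_J(I)} \simeq \C_{I \otimes J}$.

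Finally, the $\IK$-linear statement requires no new argument: every localization appearing above is $\IK$-linear and enjoys its universal property in the $2$-category of cocomplete $\IK$-linear tensor categories by \cref{formalprop}(1), so the displayed equivalences are automatically equivalences of cocomplete $\IK$-linear tensor categories. I expect the main obstacle to be purely bookkeeping: keeping the various compatibility isomorphisms $\overline{R}_J \circ R_I \cong R_{I\otimes J} \cong \overline{R}_I \circ R_J$ straight and verifying the $2$-dimensional part of the universal property; the mathematical content is entirely in the first paragraph's reduction of ``inverting $I \otimes J$'' to ``inverting $I$ and $J$''.
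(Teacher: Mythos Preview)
Your proposal is correct and follows essentially the same approach as the paper: both reduce the entire proposition to the claim that a cocontinuous tensor functor inverts $I \otimes J \to \O_\C$ if and only if it inverts each of $I \to \O_\C$ and $J \to \O_\C$, and both prove the nontrivial direction by observing that the factorization $I \otimes J \to I \to \O_\C$ makes $F(e)$ a split epimorphism, hence an isomorphism by \cref{isiso}. Your write-up is simply more explicit about unwinding the universal properties to extract the diagram and the equivalences, but the mathematical content is identical.
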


\begin{proof}
It suffices to prove the following claim: a cocontinuous tensor functor $F : \C \to \D$ maps $I \otimes J \to \O_\C$ to an isomorphism if and only if $F$ maps $I \to \O_\C$ and $J \to \O_\C$ to isomorphisms. In fact, this shows that $\C_{I \otimes J}$ has the same universal property as $(\C_J)_{R_J(I)}$, and likewise $(\C_I)_{R_I(J)}$, and it also allows us to construct $\smash{\overline{R}_I}$ and $\smash{\overline{R}_J}$. The direction $\impliedby$ of the claim is trivial. For the direction $\implies\!$, assume that $F$ maps $I \otimes J \to \O_\C$ to an isomorphism. Since that morphism factors as $I \otimes J \to I \to \O_\C$, the functor maps $I \to \O_\C$ to a split epimorphism and hence to an isomorphism by \cref{isiso}. Similarly it follows that $F$ maps $J \to \O_\C$ to an isomorphism.
\end{proof}

\begin{rem} \label{expldes}
In the locally finitely presentable case, we can make the functors of \cref{twoidal} more explicit. By definition, $\smash{\overline{R}_I}$ maps an object $M \in \C_J \subseteq \C$ to $R_{I \otimes J}(M) \in \C_{I \otimes J}$, which is given by the colimit of the sequence
\[\HOM(I^{\otimes n} \otimes J^{\otimes n},M) \cong \HOM\bigl(I^{\otimes n},\HOM(J^{\otimes n},M)\bigr) \cong \HOM(I^{\otimes n},M).\]
Thus, there is a canonical isomorphism $\smash{\overline{R}_I(M) \cong R_I(M)}$. We cannot write $\smash{\overline{R}_I \cong R_I}$ since the domains are different. Similarly, we have $\smash{\overline{R}_J(M) \cong R_J(M)}$.  We also see that
\[\C_{I \otimes J} = \C_I \cap \C_J\]
as full subcategories of $\C$. This is analogous to \cref{Xprod}(\ref{Xprodinter}). 
\end{rem}

The following theorem is important because it enables us to glue objects with respect to an idal cover.

\begin{thm} \label{global-glue}
Let $\C$ be a locally finitely presentable tensor category. Let $I \to \O_\C \leftarrow J$ be a~cover consisting of two finitely presentable idals. Then the square from \cref{twoidal}
\[\begin{tikzcd}[row sep=6ex, column sep=6ex] \C \ar{r}{R_I} \ar{d}[swap]{R_J}  & \C_I \ar{d}{\overline{R}_J} \\ \C_J \ar{r}[swap]{\overline{R}_I} & \C_{I \otimes J} \end{tikzcd}\]
is a bicategorical pullback square in the $2$-category of cocomplete tensor categories. The corresponding statement also holds in the linear case.
\end{thm}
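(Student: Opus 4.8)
The plan is to show that the canonical comparison functor
\[\Phi \colon \C \longrightarrow \P \coloneqq \C_I \times_{\C_{I \otimes J}} \C_J\]
into the $2$-categorical pullback (iso-comma category) is an equivalence. Objects of $\P$ are triples $(M_I, M_J, \phi)$ with $M_I \in \C_I$, $M_J \in \C_J$ and $\phi \colon \overline{R}_J(M_I) \myiso \overline{R}_I(M_J)$ an isomorphism in $\C_{I \otimes J}$, morphisms are the evident compatible pairs, and $\Phi(M) = (R_I(M), R_J(M), c_M)$ with $c_M$ the canonical isomorphism $\overline{R}_J R_I(M) \cong R_{I \otimes J}(M) \cong \overline{R}_I R_J(M)$ from \cref{twoidal}. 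One first records the purely formal part: $\P$ becomes a cocomplete (resp.\ $\IK$-linear) tensor category with colimits and tensor product computed componentwise, the projections are cocontinuous tensor functors, and $\P$ is the bicategorical pullback in the $2$-category of cocomplete (resp.\ $\IK$-linear) tensor categories, all structure being created by the underlying-category functor. Since $\Phi$ is visibly a cocontinuous tensor functor and a tensor equivalence of underlying categories automatically upgrades to an equivalence in the $2$-category, it suffices to prove that $\Phi$ is an equivalence of categories.

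The decisive lemma is a Mayer--Vietoris pullback square: \emph{for every $M \in \C$, the square}
\[\begin{tikzcd}[row sep=5ex, column sep=5ex] M \ar{r} \ar{d} & R_I(M) \ar{d} \\ R_J(M) \ar{r} & R_{I \otimes J}(M) \end{tikzcd}\]
\emph{with edges the reflection units is a pullback in $\C$.} To prove this, apply $\HOM(-,M)$ to the pushout squares exhibiting $(I^{\otimes n}, J^{\otimes m})$ as idal covers (\cref{covpot}); since $\HOM(-,M)$ turns colimits into limits and $\HOM(\O_\C,M) \cong M$, one obtains, for all $n,m$, pullback squares with corners $M$, $\HOM(I^{\otimes n},M)$, $\HOM(J^{\otimes m},M)$, $\HOM(I^{\otimes n}\otimes J^{\otimes m},M)$ and edges induced by the idal structure maps. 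Now pass to the filtered colimit over $(n,m) \in \IN^2$; this commutes with the finite limit since $\C$ is locally finitely presentable, the colimits of the four corners are identified with $M$, $R_I(M)$, $R_J(M)$, $R_{I\otimes J}(M)$ by \cref{locex}(2) (using the idal property so that the transition maps match), and the edges become the unit morphisms. Read the other way round, the same formula $R_I \cong {\varinjlim}_n \HOM(I^{\otimes n},-)$ yields the crucial additional fact that $R_I$ \emph{preserves finite limits} — each $\HOM(I^{\otimes n},-)$ is a right adjoint, and filtered colimits commute with finite limits — and likewise for $R_J$ and $R_{I\otimes J}$; these functors are of course also cocontinuous, being reflectors.

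Full faithfulness of $\Phi$ is then formal: for $M,N \in \C$, applying $\Hom_\C(M,-)$ to the Mayer--Vietoris pullback for $N$ and using the reflection adjunctions $\Hom_\C(M,R_\bullet N) \cong \Hom_{\C_\bullet}(R_\bullet M, R_\bullet N)$ identifies $\Hom_\C(M,N)$ with the pullback $\Hom_{\C_I}(R_I M,R_I N) \times_{\Hom_{\C_{I\otimes J}}(R_{I\otimes J}M,R_{I\otimes J}N)} \Hom_{\C_J}(R_J M,R_J N)$, which is exactly $\Hom_\P(\Phi M,\Phi N)$. For essential surjectivity, given $(M_I,M_J,\phi)$ set $M \coloneqq M_I \times_{\overline{R}_I(M_J)} M_J$, the pullback in $\C$ along $M_I \xrightarrow{\ \phi\circ\eta\ } \overline{R}_I(M_J) \xleftarrow{\ \eta\ } M_J$ of the reflection units. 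Since $R_I$ preserves this pullback, $R_I(M_I) = M_I$, and $R_I$ sends the unit $M_J \to \overline{R}_I(M_J)$ to an isomorphism, one gets $R_I(M) \cong M_I$; since $R_J$ preserves the pullback and sends the composite $M_I \to \overline{R}_I(M_J)$ to a morphism isomorphic to the isomorphism $\phi$, one gets $R_J(M) \cong M_J$. A short diagram chase — unproblematic here, as with only two idals there is no cocycle to verify — shows these identifications are compatible with $c_M$ and $\phi$, so $\Phi(M) \cong (M_I,M_J,\phi)$ in $\P$. Hence $\Phi$ is an equivalence, which proves the theorem, the $\IK$-linear case being identical. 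The main obstacle is the Mayer--Vietoris lemma, namely the bookkeeping needed to identify the colimit system's transition and edge maps with the reflection units, together with the observation that the explicit description of $R_I$ forces it to be left exact; granting these, the remainder is a routine descent argument.
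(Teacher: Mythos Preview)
Your proof is correct and follows essentially the same approach as the paper. The paper constructs an explicit quasi-inverse $G\colon \P \to \C$ by the same pullback formula you use for essential surjectivity, and verifies $G\circ\Phi\cong\id_\C$ via precisely your Mayer--Vietoris lemma and $\Phi\circ G\cong\id_\P$ via the left-exactness of $R_I,R_J$ that you also isolate; your full-faithfulness/essential-surjectivity packaging is a minor reorganisation of the same ingredients.
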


\begin{proof}
The bicategorical pullback $\smash{\overline{\C} \coloneqq \C_I \times_{\C_{I \otimes J}} \C_J}$ consists of triples $(A,B,\tau)$, where $A \in \C_I$, $B \in \C_J$ are objects and $\tau : R_J(A) \myiso R_I(B)$ is an isomorphism in $\C_{I \otimes J}$, where we use the identifications from \cref{expldes}. Now \cref{twoidal} yields a cocontinuous tensor functor
\[F : \C \to \overline{\C},\, M \mapsto (R_I(M),R_J(M),\tau_M),\]
where $\tau_M$ is the canonical isomorphism $R_J\bigl(R_I(M)\bigr) \myiso R_{I \otimes J}(M) \myiso R_I\bigl(R_J(M)\bigr)$. In order to show that $F$ is an equivalence of cocomplete tensor categories, it suffices to prove that the underlying functor is an equivalence of categories. In order to achieve this, we define a functor in the other direction (recall that $\C$ is complete by \cite[Corollary 1.28]{AR94})
\[G : \overline{\C} \to \C,\, (A,B,\tau) \mapsto A \times_{\tau : R_J(A) \myiso R_I(B)} B.\]
This pullback should be thought of as a gluing of $A$ and $B$ along $\tau$, and this is literally true when $\C=\Q(X)$ for some quasi-compact quasi-separated scheme $X$.

We will now prove $G \circ F \cong \id_{\C}$. If $M \in \C$, then there is a canonical morphism
\[M \to R_I(M) \times_{R_{I \otimes J}(M)} R_J(M) \myiso G(F(M)).\]
For every $n \in \IN$ we know that $I^{\otimes n}$ and $J^{\otimes n}$ form an idal cover by \cref{covpot}. Hence, for every object $T \in \C$ the square
\[\begin{tikzcd}[row sep=6ex, column sep=6ex] T \otimes I^{\otimes n} \otimes J^{\otimes n}  \ar{r}  \ar{d} & T \otimes J^{\otimes n} \ar{d} \\ T \otimes I ^{\otimes n} \ar{r} & T \end{tikzcd}\]
is a pushout square, from which it follows that
\[\begin{tikzcd}[row sep=6ex, column sep=6ex] M \ar{r} \ar{d} &  \HOM(J^{\otimes n},M) \ar{d} \\  \HOM(I^{\otimes n},M) \ar{r} & \HOM(I^{\otimes n} \otimes J^{\otimes n},M) \end{tikzcd}\]
is a pullback square. Since filtered colimits are exact in $\C$ by \cite[Proposition 1.59]{AR94}, this implies
\[M \cong {\varinjlim}_n \bigl(\HOM(I^{\otimes n},M) \times_{\HOM(I^{\otimes n} \otimes J^{\otimes n},M)} \HOM(J^{\otimes n},M)\bigr) \cong R_I(M) \times_{R_{I \otimes J}(M)} R_J(M).\]
One checks that this isomorphism is the canonical one.

Finally, we will prove $F \circ G \cong \id_{\overline{\C}}$. Let $(A,B,\tau : R_J(A) \myiso R_I(B))$ be an object in $\smash{\overline{\C}}$. Since each $\HOM(I^{\otimes n},-)$ preserves limits, in particular pullbacks, and filtered colimits are exact in~$\C$, we see that 
\[R_I(G(A,B,\tau)) \cong G(R_I(A),R_I(B),R_I(\tau)) \cong  A  \times_{\tau : R_J(A) \myiso R_I(B)} R_I(B) \cong A.\]
A similar argument shows $R_J(G(A,B,\tau)) \cong B$. These isomorphisms induce an isomorphism $F(G(A,B,\tau)) \myiso (A,B,\tau)$.
\end{proof}

Now let us check that some properties of objects in tensor categories can be tested locally with respect to an idal cover.

\begin{prop} \label{fplok}
In the situation of \cref{global-glue} an object $M \in \C$ is finitely presentable (resp.\ dualizable, resp.\ invertible, resp.\ symtrivial) if and only if the objects $R_I(M) \in \C_I$ and $R_J(M) \in \C_J$ have this property.
\end{prop}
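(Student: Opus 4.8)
The plan is to exploit the bicategorical pullback equivalence $\C \myiso \C_I \times_{\C_{I \otimes J}} \C_J$ from \cref{global-glue}, together with the fact that $R_I$, $R_J$ and $R_{I \otimes J}$ are cocontinuous tensor functors which by \cref{locex}(2) preserve finitely presentable objects. The ``only if'' direction is therefore immediate in every case: if $M$ is finitely presentable (resp.\ dualizable, invertible, symtrivial), then $R_I(M)$ and $R_J(M)$ inherit the property, since cocontinuous tensor functors preserve dualizable and invertible objects and preserve the self-symmetry, and preservation of finite presentability was already recorded. So the content is the ``if'' direction, and here I would treat the four properties in turn.

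For \emph{dualizability} and \emph{invertibility} I would argue via the gluing functor $G$ from the proof of \cref{global-glue}. Suppose $R_I(M)$ has a dual $(R_I(M))^\vee$ in $\C_I$ and $R_J(M)$ has a dual in $\C_J$; applying $R_{I\otimes J}$ shows both restrict to duals of $R_{I\otimes J}(M)$ in $\C_{I\otimes J}$, so by uniqueness of duals there is a canonical isomorphism identifying them, and the gluing (pullback in $\C$) of $(R_I(M))^\vee$ and $(R_J(M))^\vee$ along this isomorphism is an object $N \in \C$. One then checks that the evaluation and coevaluation maps glue: since $R_I$ and $R_J$ are tensor functors and $\C \simeq \C_I \times_{\C_{I\otimes J}} \C_J$, a morphism in $\C$ (such as $\O_\C \to M \otimes N$) is the same datum as a compatible pair of morphisms after applying $R_I$ and $R_J$, and the triangle identities likewise can be checked after applying $R_I$ and $R_J$ because $F = (R_I, R_J, \tau)$ is an equivalence. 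Hence $N$ is a dual of $M$. Invertibility is the special case where the duals are chosen so that evaluation and coevaluation are isomorphisms; again this can be tested after applying the jointly conservative pair $(R_I, R_J)$.

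For \emph{finite presentability} the cleanest route uses that $\C_I \hookrightarrow \C$ and $\C_J \hookrightarrow \C$ preserve filtered colimits (shown in the proof of \cref{locex}(2)), hence so does $G$, which is computed as a finite limit (a pullback) of such inclusions composed with $\HOM(I^{\otimes n},-)$-type functors — more directly, $\C \simeq \C_I \times_{\C_{I \otimes J}} \C_J$ is a pullback of locally finitely presentable categories along filtered-colimit-preserving right adjoints, and under this equivalence the finitely presentable objects of $\C$ correspond to triples $(A, B, \tau)$ with $A, B$ finitely presentable: filtered colimits in $\C$ are computed componentwise, and $\Hom_{\C}(M, -) \cong \Hom_{\C_I}(R_I(M), -) \times_{\Hom_{\C_{I\otimes J}}(\cdots)} \Hom_{\C_J}(R_J(M), -)$, a finite limit of filtered-colimit-preserving functors, hence filtered-colimit-preserving whenever $R_I(M)$ and $R_J(M)$ are. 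For \emph{symtriviality}, recall from \cref{idal-prop}(\ref{idalsymtr}) that $M$ is symtrivial iff $S_{M,M} = \id_{M \otimes M}$; since $R_I$ and $R_J$ are symmetric monoidal they send $S_{M,M}$ to $S_{R_I(M), R_I(M)}$ and $S_{R_J(M), R_J(M)}$, and these being identities forces $S_{M,M} = \id$ because the pair $(R_I, R_J)$ is conservative (being the components of the equivalence $F$), so $S_{M,M}$ and $\id_{M\otimes M}$ agree as soon as they agree after applying $R_I$ and $R_J$.

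The main obstacle I anticipate is the dualizability argument: one must verify not merely that candidate unit and counit maps exist locally, but that they descend to honest morphisms in $\C$ and satisfy the triangle identities there. This is where the precise form of the equivalence $\C \simeq \C_I \times_{\C_{I\otimes J}} \C_J$ as an equivalence of \emph{tensor} categories is essential — it guarantees that $\Hom_\C(A, B)$ for $A, B$ built by gluing is itself the matching pullback of hom-sets, so that both the construction of the unit/counit and the checking of the triangle identities reduce to the already-known local statements. Once one is comfortable that ``a morphism in $\C$ is a compatible pair of morphisms upstairs,'' all four cases become routine diagram chases, with symtriviality and finite presentability essentially formal and invertibility a corollary of the dualizable case.
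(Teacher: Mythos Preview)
Your proposal is correct and follows essentially the same approach as the paper: both use the equivalence $\C \simeq \C_I \times_{\C_{I\otimes J}} \C_J$ from \cref{global-glue} to reduce each property to a componentwise check, constructing the dual of $(A,B,\tau)$ by gluing the local duals along the dualized transition isomorphism, deducing invertibility from the dualizable case via conservativity of $(R_I,R_J)$, and handling symtriviality and finite presentability formally. The paper is terser (it simply asserts that $(A^*,B^*,\sigma)$ ``is easily checked to be dual'' and that finite presentability of $(A,B,\tau)$ is a ``simple observation''), while you spell out the hom-as-finite-limit argument and the descent of the unit/counit; the only small omission on your side is that the finite-limit argument for $\Hom_\C(M,-)$ also requires $R_{I\otimes J}(M)$ to be finitely presentable, which follows since $R_{I\otimes J}(M)\cong \overline{R}_J(R_I(M))$ and $\overline{R}_J$ preserves finitely presentable objects by \cref{locex}(2).
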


\begin{proof}
1. If $M \in \C$ is finitely presentable, then by \cref{locex} the objects $R_I(M) \in \C_I$ and $R_J(M) \in \C_J$ are finitely presentable. The converse follows from the simple observation that $\smash{(A,B,\tau) \in \overline{\C} \coloneqq \C_I \times_{\C_{I \otimes J}} \C_J} \simeq \C$ is finitely presentable when $A \in \C_I$ and $B \in \C_J$ are finitely presentable.

2. If $M \in \C$ is dualizable, then the objects $R_I(M) \in \C_I$ and $R_J(M) \in \C_J$ are dualizable simply because $R_I$ and $R_J$ are tensor functors. Conversely, assume that $\smash{(A,B,\tau) \in \overline{\C}}$ is an object such that $A$ and $B$ are dualizable. We choose duals $A^*,B^*$ and dualize the isomorphism $\smash{\tau^{-1} : \overline{R}_I(B) \to \overline{R}_J(A)}$ to an isomorphism
\[\begin{tikzcd}[column sep=7ex]
\sigma : \overline{R}_J(A^*) = \overline{R}_J(A)^* \ar{r}{\left(\tau^{-1}\right)^*} & \overline{R}_I(B)^* = \overline{R}_I(B^*).
\end{tikzcd}\]
This defines an object $(A^*,B^*,\sigma)$ which is easily checked to be dual to $(A,B,\tau)$.

3. If $M \in \C$ is invertible, then the objects $R_I(M) \in \C_I$ and $R_J(M) \in \C_J$ are invertible simply because $R_I$ and $R_J$ are tensor functors. Conversely, assume that $\smash{(A,B,\tau) \in \overline{\C}}$ is an object such that $A$ and $B$ are invertible. Then $A$ and $B$ are dualizable, so that by 2.\ also $(A,B,\tau)$ is dualizable. The evaluation $\smash{(A,B,\tau)^* \otimes (A,B,\tau) \to \O_{\overline{\C}}}$ is an isomorphism because the evaluations $A^* \otimes A \to \O_{\C_I}$ and $B^* \otimes B \to \O_{\C_J}$ are isomorphisms. Hence, $(A,B,\tau)$ is invertible.

4. The statement about symtrivial objects follows directly from \cref{global-glue}.
\end{proof}

For the sake of completeness, we include a criterion when two localizations $\C_I$, $\C_J$ agree.

\begin{prop}
Let $\C$ be a locally finitely presentable tensor category and let $I \to \O_\C \leftarrow J$ be two finitely presentable idals in $\C$. There is a cocontinuous tensor functor $F : \C_I \to \C_J$ with $F \circ R_I \cong R_J$ if and only if there is a natural number $n$ and a morphism of idals $J^{\otimes n} \to I$. Hence, there is an equivalence of tensor categories $F : \C_I \to \C_J$ with $F \circ R_I \cong R_J$ if and only if there are natural numbers $n,m$ and morphisms of idals $J^{\otimes n} \to I$ and $I^{\otimes m} \to J$. The corresponding statements also hold in the linear case.
\end{prop}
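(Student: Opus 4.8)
The plan is to reduce the first equivalence to the single assertion that $R_J$ sends the idal $e_I : I \to \O_\C$ to an isomorphism in $\C_J$. Indeed, by the bicategorical universal property of the localization $\C_I$, a cocontinuous tensor functor $F : \C_I \to \C_J$ with $F \circ R_I \cong R_J$ exists precisely when $R_J$ inverts $e_I$: if such an $F$ exists, then $R_J \cong F \circ R_I$ inverts $e_I$ since $R_I$ does (\cref{locex}) and $F$ is a tensor functor; conversely, if $R_J$ inverts $e_I$, the universal property of $\C_I$ produces the factorization. So everything will come down to showing that $R_J(e_I)$ is an isomorphism if and only if there are $n \in \IN$ and a morphism of idals $J^{\otimes n} \to I$.

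The ``if'' direction is short: given a morphism of idals $\phi : J^{\otimes n} \to I$, one has $e_I \circ \phi = (\text{canonical map } J^{\otimes n} \to \O_\C)$, so applying the tensor functor $R_J$ exhibits $R_J(e_I) : R_J(I) \to \O_{\C_J}$ as a split epimorphism, because the canonical map $R_J(J^{\otimes n}) \to \O_{\C_J}$ is a tensor power of the isomorphism $R_J(e_J)$, hence an isomorphism. Since $R_J(e_I)$ is an idal in $\C_J$ (\cref{idal-prop}) and split epimorphisms are regular, \cref{isiso} then forces it to be an isomorphism.

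The ``only if'' direction is the one I expect to be the main obstacle. Assume $R_J(e_I)$ is an isomorphism. Using $R_J = {\varinjlim}_n \HOM(J^{\otimes n},-)$ from \cref{locex} (a filtered colimit whose transition maps come from the canonical idal morphisms $J^{\otimes(n+1)} \to J^{\otimes n}$) together with finite presentability of $\O_\C$ and the $\otimes$-$\HOM$ adjunction, one gets, for every $M \in \C$, a natural isomorphism $\Hom_\C(\O_\C, R_J(M)) \cong {\varinjlim}_n \Hom_\C(J^{\otimes n}, M)$. Applying this with $M = I$, the section $\psi := R_J(e_I)^{-1} \circ (\text{unit } \O_\C \to \O_{\C_J})$ of $R_J(e_I)$ is represented by some $\phi : J^{\otimes n} \to I$ in $\C$. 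Reading the identity $R_J(e_I) \circ \psi = (\text{unit})$ through the same description with $M = \O_\C$, one sees that $e_I \circ \phi$ and the canonical map $J^{\otimes n} \to \O_\C$ represent the same element of ${\varinjlim}_n \Hom_\C(J^{\otimes n}, \O_\C)$, hence become equal after composition with some transition $J^{\otimes m} \to J^{\otimes n}$, $m \geq n$. Thus $\phi$ composed with $J^{\otimes m} \to J^{\otimes n}$ is a morphism of idals $J^{\otimes m} \to I$. The delicate point --- and the reason one passes to a larger tensor power --- is exactly this last bookkeeping: a priori the adjunction only yields a morphism in $\C$, and it is the filtered-colimit cancellation that upgrades it to a morphism of idals.

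For the final biconditional: an equivalence $F : \C_I \to \C_J$ with $F \circ R_I \cong R_J$ produces a morphism of idals $J^{\otimes n} \to I$ by the part just proved, and a quasi-inverse $G$ of $F$ satisfies $G \circ R_J \cong R_I$, so the same part with $I$ and $J$ interchanged gives a morphism of idals $I^{\otimes m} \to J$. Conversely, from morphisms of idals $J^{\otimes n} \to I$ and $I^{\otimes m} \to J$ we get cocontinuous tensor functors $F : \C_I \to \C_J$ and $G : \C_J \to \C_I$ with $F \circ R_I \cong R_J$ and $G \circ R_J \cong R_I$; then $(G \circ F) \circ R_I \cong R_I$ and $(F \circ G) \circ R_J \cong R_J$, and the uniqueness clauses in the universal properties of $\C_I$ and $\C_J$ give $G \circ F \cong \id_{\C_I}$ and $F \circ G \cong \id_{\C_J}$, so $F$ is an equivalence. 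Everything above respects $\IK$-linear structure (\cref{formalprop}), which yields the linear statements.
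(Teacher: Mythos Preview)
Your proposal is correct and follows essentially the same route as the paper's proof: both reduce the existence of $F$ to the statement that $R_J$ inverts $e_I$, both use the explicit formula $R_J \cong {\varinjlim}_n \HOM(J^{\otimes n},-)$ together with finite presentability of $\O_\C$ to translate this into surjectivity of ${\varinjlim}_n \Hom(J^{\otimes n},I) \to {\varinjlim}_n \Hom(J^{\otimes n},\O_\C)$, and both upgrade the resulting map $J^{\otimes k} \to I$ to a morphism of idals by passing to a higher tensor power via the filtered-colimit identification. The only cosmetic difference is that for the ``if'' direction the paper factors through $\C_{J^{\otimes n}}$ via \cref{formalprop}(\ref{idalpull}), whereas you argue directly that $R_J(e_I)$ is a split epimorphism and invoke \cref{isiso}; these are the same argument in different packaging.
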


\begin{proof}
A morphism of idals $J^{\otimes n} \to I$ induces by \cref{formalprop}(\ref{idalpull}) a cocontinuous tensor functor $\C_I \to \C_{J^{\otimes n}}$ under~$\C$, i.e.\ the evident triangle commutes up to isomorphism. Since $R_J$ maps $J \to \O_\C$ and hence also $J^{\otimes n} \to \O_\C^{\otimes n} \myiso \O_\C$ to an isomorphism, there is a cocontinuous tensor functor $\C_{J^{\otimes n}} \to \C_J$ under~$\C$. The composition is a cocontinuous tensor functor $\C_I \to \C_J$ under~$\C$. If there is also a morphism of idals $I^{\otimes m} \to J$, the induced cocontinuous tensor functor $\C_J \to \C_I$ under $\C$ is pseudo-inverse to $\C_I \to \C_J$ because this is the case on $\C$.

Conversely, assume that there is a cocontinuous tensor functor $\C_I \to \C_J$ under $\C$. This means that $R_J : \C \to \C_J$ maps $I \to \O_\C$ to an isomorphism, i.e.\ by \cref{locex} that
\[{\varinjlim}_n \HOM(J^{\otimes n},I) \to {\varinjlim}_n \HOM(J^{\otimes n},\O_\C)\]
is an isomorphism in $\C$. Since $\O_\C$ is finitely presentable, we may apply the ``global section'' functor $\Hom(\O_\C,-)$ and derive that
\[{\varinjlim}_n \Hom(J^{\otimes n},I) \to {\varinjlim}_n \Hom(J^{\otimes n},\O_\C)\]
is an isomorphism of sets (or $\IK$-modules in the $\IK$-linear case). In particular, the identity $\id_{\O_\C} : J^{\otimes 0} \to \O_\C$ has a preimage, say $J^{\otimes k} \to I$ for some natural number $k$. This means that there is some natural number $n \geq k$ such that the composition $J^{\otimes n} \to J^{\otimes k} \to I \to \O_\C$ is equal to the idal $J^{\otimes n} \to \O_\C$. Therefore, $J^{\otimes n} \to I$ is a morphism of idals.
\end{proof}

\begin{cor}
If $\C$ is a locally finitely presentable linear tensor category and $e : J \to \O_\C$ is a finitely presentable idal in $\C$, then $\C_J=0$ holds if and only if $e$ is nilpotent, i.e.\ there is some $n \in \IN$ such that $e^{\otimes n} = 0$. \hfill $\square$
\end{cor}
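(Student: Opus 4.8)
The plan is to derive this corollary from the preceding Proposition, applied to the pair of idals $(0 \to \O_\C,\, J \to \O_\C)$, where $0 \to \O_\C$ is the zero morphism out of the zero object. First I would record two bookkeeping facts. The zero morphism $0 \to \O_\C$ is a finitely presentable idal: the idal identity $e \otimes \id_0 = \id_0 \otimes e : 0 \otimes 0 \to 0$ holds vacuously because $0 \otimes 0 \cong 0$, and $0$ is finitely presentable in any locally finitely presentable category. Moreover $\C_0 = 0$: since $\HOM(0,M)$ represents the constant one-point functor, it is the terminal object of $\C$, which by $\IK$-linearity coincides with $0$; hence the defining condition "$M \to \HOM(0,M)$ is an isomorphism" for membership in $\C_0$ forces $M \cong 0$. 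This is the only place where linearity is needed — without it $\HOM(0,M)$ is the terminal rather than the zero object, and "$e^{\otimes n}=0$" is not even meaningful.

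Next I would observe that $\C_J = 0$ is equivalent to the existence of a cocontinuous tensor functor $F : \C_0 \to \C_J$ with $F \circ R_0 \cong R_J$. For the forward implication, if $\C_J = 0$ then every cocontinuous tensor functor into $\C_J$ is the zero functor, so $F = 0$ works, both $F \circ R_0$ and $R_J$ being the zero functor $\C \to 0$. For the converse, if such an $F$ exists then $\O_{\C_J} \cong F(\O_{\C_0}) = F(0) \cong 0$, because $F$ is a tensor functor preserving the initial object, and an object whose unit is $0$ is itself $0$; hence $\C_J = 0$. (Equivalently, one invokes the universal property of $\C_0$ directly: $R_J$ factors through $R_0$ if and only if $R_J$ inverts $0 \to \O_\C$, i.e.\ if and only if $0 \to \O_{\C_J}$ is an isomorphism.)

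Finally I would apply the preceding Proposition with $I = 0$: such an $F$ exists if and only if there is some $n \in \IN$ together with a morphism of idals $J^{\otimes n} \to 0$. But by \cref{idal-prop}(4) a morphism of idals $J^{\otimes n} \to 0$ is just the (necessarily unique) morphism $J^{\otimes n} \to 0$ in $\C$ making the triangle over $\O_\C$ commute, i.e.\ the requirement that $e^{\otimes n} : J^{\otimes n} \to \O_\C$, which then factors as $J^{\otimes n} \to 0 \to \O_\C$, be the zero morphism. Chaining the equivalences yields $\C_J = 0 \iff e^{\otimes n} = 0$ for some $n \iff e$ is nilpotent.

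I do not anticipate a genuine obstacle: the whole content is already carried by the previous Proposition, and the only subtleties are the two bookkeeping points above. A more self-contained alternative would be to compute $\O_{\C_J} = R_J(\O_\C) \cong {\varinjlim}_n \HOM(J^{\otimes n},\O_\C)$ via \cref{locex}(2) and ask when it vanishes; applying the global-section functor $\Hom(\O_\C,-)$ (which preserves filtered colimits since $\O_\C$ is finitely presentable) reduces one direction to ${\varinjlim}_n \Hom(J^{\otimes n},\O_\C) = 0$, from which nilpotence of $e$ follows, but the reverse implication would require $\O_\C$ to be a generator, which it need not be — so routing through the Proposition, whose proof already navigated exactly this point, is the cleaner choice.
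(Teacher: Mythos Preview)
Your proposal is correct and matches the paper's intent: the paper gives no proof beyond the $\square$, treating the statement as an immediate specialization of the preceding Proposition, and your application with $I = 0$ is exactly the intended specialization. Your bookkeeping points (that $0 \to \O_\C$ is a finitely presentable idal, that $\C_0 = 0$ in the linear case, and that a morphism of idals $J^{\otimes n} \to 0$ amounts to $e^{\otimes n} = 0$) are the right details to fill in, and your remark that linearity is genuinely needed is apt.
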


\begin{rem}\label{closedcat}
The tensor functor $R_I : \C \to \C_I$ is a tensor categorical analog of an open immersion. There is also a tensor categorical analog of a closed immersion, which is actually more elementary and can be described as follows, cf.\ \cite[Corollary 5.3.9]{Bra14}. If $\C$ is a cocomplete linear tensor category and $e : I \to \O_\C$ is any morphism in $\C$, then we may define $\C/I$ as a cocomplete linear tensor category equipped with a universal cocontinuous linear tensor functor $\C \to \C/I$ which maps $e$ to zero. We may construct the underlying category of $\C/I$ as the reflective subcategory of $\C$ consisting of those $M \in \C$ such that $M \otimes e = 0$. The functor $\C \to \C/I$, $M \mapsto M \otimes \O_\C/I$ provides a reflection, where $\O_\C/I$ denotes the cokernel of $I \to \O_\C$. The unit of $\C/I$ is $\O_\C/I$. Binary tensor products and arbitrary colimits in $\C/I$ are directly inherited from $\C$. If $\C=\Q(X)$ for some scheme $X$ and $I \to \O_X$ is a morphism, then the construction before shows $\C/I \simeq \Q(V(I))$, where $V(I) \coloneqq \Spec(\O_X/I)$ is the closed subscheme associated to $I$.
\end{rem}

\section{Fiber products of schemes}
\label{sec:fiber}

The theory from \cref{sec:tensorloc} can be used to derive our main theorems about (fiber) products of schemes.

\begin{recol}
Recall that the bicategorical coproduct of two objects $A,B$ of a $2$-category (or even a bicategory) is a pair of morphisms $A \rightarrow A \sqcup B \leftarrow B$ which induces an equivalence of categories $\Hom(A \sqcup B,T) \myiso \Hom(A,T) \times \Hom(B,T)$ for every object $T$.
\end{recol}

\begin{thm}  \label{prodthm}
Let $\IK$ be a commutative ring and let $X,Y$ be two quasi-compact quasi-separated $\IK$-schemes. Then the cocontinuous $\IK$-linear tensor functors
\[\Q(X) \xrightarrow{p_X^*} \Q(X \times_{\IK} Y) \xleftarrow{p_Y^*} \Q(Y)\]
exhibit $\Q(X \times_{\IK} Y)$ as the bicategorical coproduct of $\Q(X)$ and $\Q(Y)$ in the $2$-category of cocomplete $\IK$-linear tensor categories and cocontinuous $\IK$-linear tensor functors.
\end{thm}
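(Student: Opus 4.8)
The strategy is to reduce, via the gluing theorem \cref{global-glue} and a double induction based on \cref{gen-qcqs}, to the case where $X$ and $Y$ are both affine. In that base case write $X=\Spec R$, $Y=\Spec S$, so that $\Q(X\times_\IK Y)=\Mod(R\otimes_\IK S)$ while $\Q(X)=\Mod(R)$ and $\Q(Y)=\Mod(S)$. By the Eilenberg--Watts theorem every cocontinuous $\IK$-linear tensor functor $\Mod(T)\to\D$ is isomorphic to one of the form $M\mapsto M\otimes_T\O_\D$ for a unique $\IK$-algebra homomorphism $T\to\End_\D(\O_\D)$ (the target ring being commutative by Eckmann--Hilton), and the only monoidal natural transformations between such functors are identities; hence $\Hom_{\c\otimes}(\Mod(T),\D)$ is equivalent to the discrete category $\Hom_{\CAlg_\IK}(T,\End_\D(\O_\D))$. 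Since $R\otimes_\IK S$ is the coproduct of $R$ and $S$ in $\CAlg_\IK$, precomposition with $p_X^*$ and $p_Y^*$ yields an equivalence
\[\Hom_{\c\otimes}(\Mod(R\otimes_\IK S),\D)\myiso\Hom_{\c\otimes}(\Mod R,\D)\times\Hom_{\c\otimes}(\Mod S,\D)\]
natural in $\D$, so $\Q(\Spec R\times_\IK\Spec S)$ is the bicategorical coproduct of $\Mod R$ and $\Mod S$. (Equivalently, one may invoke $\Mod(R\otimes_\IK S)\simeq\Mod R\coboxtimes_\IK\Mod S$.)

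\textbf{Coproducts glue like their factors.}
The key lemma I would prove is the following. Let $\C,\D$ be locally finitely presentable tensor categories, let $\iota\colon\C\to\C\sqcup\D$ be the canonical (cocontinuous) tensor functor into the bicategorical coproduct, and let $(I,J)$ be a cover of $\C$ by finitely presentable idals. Then $\C\sqcup\D$ is again locally finitely presentable, $\iota$ preserves finitely presentable objects, $(\iota I,\iota J)$ is a cover of $\C\sqcup\D$ by finitely presentable idals, and there is a natural equivalence $(\C\sqcup\D)_{\iota I}\simeq\C_I\sqcup\D$ with reflector $R_I\sqcup\id_\D$, and similarly for $J$ and for $I\otimes J$. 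The local finite presentability of $\C\sqcup\D$ and the claim about $\iota$ follow from the description $\C\sqcup\D\simeq\Ind(\C_{\fp}\boxtimes_\IK\D_{\fp})$ via Kelly's tensor product (cf.\ \cref{fcapproach}); that $\iota$ preserves idal covers is noted in \cref{coverdef}; and $(\C\sqcup\D)_{\iota I}\simeq\C_I\sqcup\D$ follows by comparing universal properties, a cocontinuous tensor functor $\C\sqcup\D\to\D'$ inverting $\iota I$ being the same datum as a cocontinuous tensor functor $\C\to\D'$ inverting $I$ together with an arbitrary cocontinuous tensor functor $\D\to\D'$. Feeding these identifications into \cref{global-glue} applied to $\C\sqcup\D$ then yields
\[\C\sqcup\D\;\simeq\;(\C_I\sqcup\D)\times_{\C_{I\otimes J}\sqcup\D}(\C_J\sqcup\D).\]

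\textbf{The induction.}
Fix a quasi-compact quasi-separated $\IK$-scheme $Y$ and let $\A$ be the class of quasi-compact quasi-separated $\IK$-schemes $X$ such that $p_X^*$, $p_Y^*$ exhibit $\Q(X\times_\IK Y)$ as the bicategorical coproduct of $\Q(X)$ and $\Q(Y)$. By \cref{gen-qcqs} it suffices to show that $\A$ contains all affine schemes and is closed under gluing two quasi-compact opens with quasi-compact intersection, all three lying in $\A$. The first point is obtained by a second, entirely analogous induction: fix an affine $X=\Spec R$, let $Y$ range over quasi-compact quasi-separated $\IK$-schemes, and take the affine--affine case above as base case. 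So in both inductions the content is the gluing step. Suppose $X=X_1\cup X_2$ with $X_1,X_2,X_{12}\coloneqq X_1\cap X_2$ quasi-compact and in $\A$. By \cref{idals-qcqs} and \cref{getall} there are finitely presentable idals $I_1,I_2\to\O_X$ forming a cover with $X_i=X_{I_i}$ and $X_{12}=X_{I_1\otimes I_2}$ (\cref{Xprod}), so $\Q(X)\simeq\Q(X_1)\times_{\Q(X_{12})}\Q(X_2)$ by \cref{global-glue}; pulling this cover back along $p_X^*$ and using \cref{Xprod}(\ref{XIpull}) gives a finitely presentable idal cover of $\Q(X\times_\IK Y)$ with associated open subschemes $X_i\times_\IK Y$, whence $\Q(X\times_\IK Y)\simeq\Q(X_1\times_\IK Y)\times_{\Q(X_{12}\times_\IK Y)}\Q(X_2\times_\IK Y)$, again by \cref{global-glue}. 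On the other hand the key lemma with $\D=\Q(Y)$ gives $\Q(X)\sqcup\Q(Y)\simeq(\Q(X_1)\sqcup\Q(Y))\times_{\Q(X_{12})\sqcup\Q(Y)}(\Q(X_2)\sqcup\Q(Y))$, and the inductive hypothesis for $X_1,X_2,X_{12}$ identifies the right-hand side with $\Q(X_1\times_\IK Y)\times_{\Q(X_{12}\times_\IK Y)}\Q(X_2\times_\IK Y)\simeq\Q(X\times_\IK Y)$. Unwinding the construction, this equivalence is isomorphic to the canonical functor $\Q(X)\sqcup\Q(Y)\to\Q(X\times_\IK Y)$ induced by $p_X^*,p_Y^*$, so $X\in\A$.

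\textbf{Main obstacle.}
The delicate point is the key lemma of the second step: one must check carefully that forming the bicategorical coproduct with a fixed locally finitely presentable tensor category is compatible both with localization at an idal and with the hypotheses of \cref{global-glue} (local finite presentability of $\C\sqcup\D$, preservation of finitely presentable objects by $\iota$, and the identification $(\C\sqcup\D)_{\iota I}\simeq\C_I\sqcup\D$). Granting this, the double induction via \cref{gen-qcqs} runs smoothly, the only residual bookkeeping being the functorial -- hence harmless -- identification of the equivalence produced by the gluing step with the canonical comparison functor.
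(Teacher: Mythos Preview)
Your proposal is correct and follows essentially the same approach as the paper's proof: the affine base case via the adjunction $\Hom_{\c\otimes/\IK}(\Mod(A),\C)\simeq\Hom_{\CAlg_\IK}(A,\End_\C(\O_\C))$, the existence and local finite presentability of the bicategorical coproduct (which the paper imports from \cite[Proposition 3.4]{Bra20} rather than \cref{fcapproach}), the identification $(\C\sqcup\D)_{\iota I}\simeq\C_I\sqcup\D$ by comparing universal properties, and the final application of \cref{global-glue} together with the double induction of \cref{gen-qcqs}. Your ``key lemma'' is stated a bit more abstractly than in the paper, but its content and proof are exactly what the paper does with $\C=\Q(X)$ and $\D=\Q(Y)$.
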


It follows that $\Q(X \times_{\IK} Y)$ is also the bicategorical coproduct of $\Q(X)$ and $\Q(Y)$ in the $2$-category of locally (finitely) presentable $\IK$-linear tensor categories and cocontinuous $\IK$-linear tensor functors. \cref{prodthm} (as well as its generalization to fiber products, see \cref{fiberprodthm} below) has been obtained in several cases before: \cite[Theorem 5.11.8]{Bra14} deals with the case that $X$ is quasi-projective, \cite[Theorem 4.2]{Sch18} deals with the case that $X,Y$ are quasi-compact semi-separated algebraic stacks with the resolution property, and \cite[Theorem 1.2]{BFN10} deals with the case that $X,Y$ are perfect (derived) stacks in the $\infty$-categorical setting; see also \cite[Corollary 9.4.2.3]{Lur18} for a generalization. Basically, the resolution property allows us to construct a \emph{descent algebra} which reduces the problem immediately to the affine case, cf.\ the proof of \cite[Theorem 5.11.17]{Bra14}, and perfect stacks are defined by a derived version of the resolution property \cite[Definition 3.2]{BFN10}. Using idals and idal covers, a reduction to the affine case is even possible when there are not enough locally free modules.
 
\begin{proof}[Proof of \cref{prodthm}]
We denote by $\Hom_{\c\otimes/ \IK}$ the category of cocontinuous $\IK$-linear tensor functors. For commutative $\IK$-algebras $A$ and cocomplete $\IK$-linear tensor categories $\C$ we have the bicategorical adjunction (see \cite[Proposition 2.2.3]{BC14} or \cite[Example 5.2.1]{Bra14})
\[\Hom_{\c\otimes/ \IK}\bigl(\Q(\Spec(A)),\C\bigr) \simeq \Hom_{\c\otimes/ \IK}\bigl(\Mod(A),\C\bigr) \simeq \Hom_{\CAlg_{\IK}}\bigl(A,\End_\C(\O_\C)\bigr).\]
It follows immediately that the claim is true if $X$ and $Y$ are affine. In the general case, by applying \cref{gen-qcqs} in each variable it suffices to prove the following: If $X = X_1 \cup X_2$ is covered by quasi-compact open subschemes $X_1$ and $X_2$ such that the theorem holds for the pairs $(X_1,Y)$, $(X_2,Y)$ and $(X_1 \cap X_2,Y)$, then it also holds for the pair $(X,Y)$. We will now omit $\IK$ from the notation. By \cite[Proposition 3.4]{Bra20} the bicategorical coproduct
\[\C \coloneqq \Q(X) \coboxtimes \Q(Y)\]
exists in the $2$-category of cocomplete linear tensor categories. Even more is true, $\C$ is a locally finitely presentable linear tensor category, and the two canonical cocontinuous tensor functors $\Q(X) \to \C \leftarrow \Q(Y)$ preserve finitely presentable objects.

By \cref{idals-qcqs} we find quasi-coherent idals $I_1 \to \O_X \leftarrow I_2$ of finite presentation such that $X_1 = X_{I_1}$ and $X_2 = X_{I_2}$, so that these form an idal cover. Consider the image $J_1 \to \O_\C \leftarrow J_2$ in $\C$ under $\Q(X) \to \C$, which is again an idal cover consisting of finitely presentable idals. By simply comparing the universal properties, we get
\[\smash{\C_{J_1} \simeq \Q(X)_{I_1} \coboxtimes \Q(Y)},\]
and therefore
\[\C_{J_1} \simeq \Q(X_{I_1}) \coboxtimes \Q(Y) = \Q(X_1) \coboxtimes \Q(Y) \simeq \Q(X_1 \times Y).\]
Similarly, we get equivalences $\C_{J_2} \simeq \Q(X_2 \times Y)$ and $\C_{J_1 \otimes J_2} \simeq \Q(X_{1,2} \times Y)$, where we abbreviate $X_{1,2} \coloneqq X_1 \cap X_2$. Under these equivalences, the canonical functor $\C_{J_i} \to \C_{J_1 \otimes J_2}$ from \cref{twoidal} corresponds to the restriction functor $\Q(X_i \times Y) \to \Q(X_{1,2} \times Y)$. Since $X \times Y$ is covered by the open subschemes $X_1 \times Y$ and $X_2 \times Y$ whose intersection is $X_{1,2} \times Y$, we deduce from \cref{global-glue} that
\[\C \simeq \C_{J_1} \times_{\C_{J_1 \otimes J_2}} \C_{J_2}\simeq \Q(X_1 \times Y) \times_{\Q(X_{1,2} \times Y)} \Q(X_2 \times Y) \simeq \Q(X \times Y).\]
This finishes the proof.
\end{proof}

\begin{rem} \label{kellyrel}
By construction of the bicategorical coproduct in \cite[Proposition 3.4]{Bra20}, we have
\[\Q(X \times_{\IK} Y) \simeq \Q(X) \coboxtimes_{\IK} \Q(Y) \simeq \Ind\bigl(\Q_{\fp}(X) \boxtimes_{\IK} \Q_{\fp}(Y)\bigr),\]
where $\boxtimes_{\IK}$ denotes Kelly's tensor product of essentially small finitely cocomplete $\IK$-linear categories (see \cite[Section 6.5]{Kel05} or \cite[Theorem 7]{LF13}). It follows
\[\Q_{\fp}(X \times_\IK Y) \simeq \Q_{\fp}(X) \boxtimes_{\IK} \Q_{\fp}(Y).\]
\end{rem}

Now let us treat fiber products of schemes.

\begin{recol}
Recall that a bicategorical pushout of two morphisms $f : C \to A$, $g : C \to B$ in a $2$-category (or even a bicategory) is a tuple $(P,i_A,i_B,\alpha)$ consisting of an object $P = A \sqcup_C B$, two $1$-morphisms $i_A : A \to P$, $i_B : B \to P$ and a $2$-isomorphism $\alpha : i_A \circ f \to i_B \circ g$,
\[\begin{tikzcd}[sep=7ex]
C \ar{r}{f} \ar{d}[swap]{g} & A \ar{d}{i_A} \ar[double equal sign distance,shorten >=1.5ex,shorten <=1.5ex, >=stealth]{dl}{\sim} \ar[phantom]{dl}[above]{\scriptstyle\alpha~~~} \\ B \ar{r}[swap]{i_B}  & P
\end{tikzcd}\]
such that for every object $T$ the induced functor
\[\Hom(P,T) \to \Hom(A,T) \times_{\Hom(C,T)} \Hom(B,T), \, h \mapsto (h \circ \iota_A, h \circ \iota_B, h \circ \alpha)\]
is an equivalence of categories. For cocomplete tensor categories we write $\smash{P = A \coboxtimes_C B}$.
\end{recol}

\begin{rem} \label{locbasechange}
If $F : \C \to \D$ is a cocontinuous tensor functor and $J \to \O_\C$ is an idal in $\C$, then the square (assuming that $\C_J$ and $\D_{F(J)}$ exist)
\[\begin{tikzcd}[row sep=6ex, column sep=6ex]
\C \ar{d}[swap]{F} \ar{r}{R_J} & \C_J \ar{d}{F_J} \\ \D \ar{r}[swap]{R_{F(J)}} & \D_{F(J)}
\end{tikzcd}\]
from \cref{formalprop} is a bicategorical pushout square in the $2$-category of cocomplete tensor categories. This follows immediately from the universal properties of $\C_J$ and $\D_{F(J)}$.

Similarly, if $\C$ is linear, $J \to \O_\C$ is any morphism and $\C \to \C/J$ is the quotient from \cref{closedcat}, then the square
\[\begin{tikzcd}[row sep=6ex, column sep=6ex]
\C \ar{d} \ar{r} & \C/J \ar{d} \\ \D \ar{r} & \D/F(J)
\end{tikzcd}\]
is a bicategorical pushout square, which follows immediately from the universal properties of $\C/J$ and $\D/F(J)$.
\end{rem}
 
\begin{thm}  \label{fiberprodthm}
Let $\IK$ be a commutative ring and let $X,Y$ be two quasi-compact quasi-separated schemes over some quasi-compact quasi-separated $\IK$-scheme $S$. Then the square
\[\begin{tikzcd}[row sep=6ex, column sep=4ex]
\Q(S) \ar{d} \ar{r} & \Q(X)  \ar{d} \\ \Q(Y) \ar{r} & \Q(X \times_S Y)
\end{tikzcd}\]
is a bicategorical pushout in the $2$-category of cocomplete $\IK$-linear tensor categories and cocontinuous $\IK$-linear tensor functors.
\end{thm}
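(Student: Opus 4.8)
The plan is to reduce to the case where $S$ is affine, where the statement follows from \cref{prodthm} over the ring of global sections of $S$, and then to bootstrap to general quasi-compact quasi-separated $S$ by a d\'evissage on $S$ along the lines of the proof of \cref{prodthm}. First suppose $S=\Spec(R)$, so that $\Q(S)\simeq\Mod(R)$. By the bicategorical adjunction recalled in the proof of \cref{prodthm}, a cocontinuous $\IK$-linear tensor functor $\Mod(R)\to\D$ is the same as an $R$-linear structure on the cocomplete $\IK$-linear tensor category $\D$; consequently a cocontinuous $\IK$-linear tensor functor out of the bicategorical pushout $\Q(X)\coboxtimes_{\Q(S)}\Q(Y)$ is the same as an $R$-linear structure on the target together with a cocontinuous $R$-linear tensor functor out of the bicategorical \emph{coproduct} $\Q(X)\coboxtimes_R\Q(Y)$ in cocomplete $R$-linear tensor categories. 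Since $X$ and $Y$ are $R$-schemes with $X\times_S Y=X\times_R Y$, applying \cref{prodthm} over the base ring $R$ identifies $\Q(X)\coboxtimes_R\Q(Y)\simeq\Q(X\times_R Y)=\Q(X\times_S Y)$ compatibly with the structure functors, which settles the affine case.

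For the general case, let $\A$ be the class of quasi-compact quasi-separated schemes $S$ for which the theorem holds for every pair of quasi-compact quasi-separated schemes $X,Y$ over $S$. The affine case shows that $\A$ contains all affine schemes, so by \cref{gen-qcqs} it suffices to show that $\A$ is closed under the two-chart gluing operation. Thus assume $S=S_1\cup S_2$ with $S_1,S_2$ quasi-compact open and $S_1,S_2,S_{12}\coloneqq S_1\cap S_2$ all in $\A$, and fix quasi-compact quasi-separated $X,Y$ over $S$ with structure maps $f\colon X\to S$, $g\colon Y\to S$. Choose finitely presentable idals $K_1\to\O_S\leftarrow K_2$ with $S_i=S_{K_i}$ (\cref{idals-qcqs}); these form an idal cover by \cref{getall}. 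Set $X_i\coloneqq f^{-1}(S_i)=X_{f^*K_i}$ and $Y_i\coloneqq g^{-1}(S_i)=Y_{g^*K_i}$, which are again quasi-compact quasi-separated by \cref{Xprod}, and $X_{12}\coloneqq X_1\cap X_2$, etc. Since $S_i\to S$ is a monomorphism one has $X_i\times_{S_i}Y_i=X_i\times_S Y_i$, an open subscheme of $X\times_S Y$, and $X\times_S Y=(X_1\times_{S_1}Y_1)\cup(X_2\times_{S_2}Y_2)$ with intersection $X_{12}\times_{S_{12}}Y_{12}$.

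Now form the bicategorical pushout $\C\coloneqq\Q(X)\coboxtimes_{\Q(S)}\Q(Y)$; as in the proof of \cref{prodthm} it exists, is locally finitely presentable, and the structure functors preserve finitely presentable objects. Let $L_i\to\O_\C$ be the image of $K_i$ under $\Q(S)\to\C$, a finitely presentable idal, the pair $(L_1,L_2)$ being an idal cover of $\C$. Repeated application of \cref{locbasechange} (base change of the pushout along the localizations $R_{K_i}$, $R_{f^*K_i}$, $R_{g^*K_i}$) identifies $\C_{L_i}\simeq\Q(X)_{f^*K_i}\coboxtimes_{\Q(S)_{K_i}}\Q(Y)_{g^*K_i}$, and then \cref{locex}(3) together with $S_i\in\A$ gives $\C_{L_i}\simeq\Q(X_i)\coboxtimes_{\Q(S_i)}\Q(Y_i)\simeq\Q(X_i\times_{S_i}Y_i)$; similarly, using \cref{twoidal} and $S_{12}\in\A$, one gets $\C_{L_1\otimes L_2}\simeq\Q(X_{12}\times_{S_{12}}Y_{12})$, and the functors $\C_{L_i}\to\C_{L_1\otimes L_2}$ correspond to the restriction functors. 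Applying \cref{global-glue} to $\C$ with the idal cover $(L_1,L_2)$, together with the analogous gluing description of $\Q(X\times_S Y)$ from its two charts, we obtain
\[\C\simeq\C_{L_1}\times_{\C_{L_1\otimes L_2}}\C_{L_2}\simeq\Q(X_1\times_{S_1}Y_1)\times_{\Q(X_{12}\times_{S_{12}}Y_{12})}\Q(X_2\times_{S_2}Y_2)\simeq\Q(X\times_S Y).\]
Checking that this equivalence is compatible with the four structure functors shows that the square in the statement is a bicategorical pushout, i.e.\ $S\in\A$, completing the induction.

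The core of the argument is the affine base case, which is nothing but \cref{prodthm} over a varying base ring; the rest is a d\'evissage that mirrors the proof of \cref{prodthm}. The point requiring the most care is the bookkeeping in the last paragraph: that localizing the pushout $\C$ at the image of an idal coming from $\Q(S)$ is again the pushout of the corresponding localizations, that $\C$ is locally finitely presentable so that \cref{global-glue} applies and that the $L_i$ are finitely presentable idals forming a cover, and---most delicately---that the resulting equivalence $\C\simeq\Q(X\times_S Y)$ respects the structure functors $\Q(X)\to\C$, $\Q(Y)\to\C$, $p_X^*$, $p_Y^*$, so that it upgrades to an equivalence of pushout cocones rather than merely an equivalence of underlying tensor categories.
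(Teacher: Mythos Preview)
Your argument is correct and is essentially the paper's second proof of the theorem: form the pushout $\C=\Q(X)\coboxtimes_{\Q(S)}\Q(Y)$, settle the affine base case via \cref{prodthm} over the ring $R$, and then run a d\'evissage on $S$ using finitely presentable idal covers pulled back to $\C$ together with \cref{global-glue}; where you invoke \cref{locbasechange}, the paper simply says ``by comparing universal properties'', which amounts to the same thing. The paper also gives an independent first proof, which bypasses the d\'evissage on $S$ by showing that immersions satisfy the pushout property and then using the diagonal $\Delta_S\colon S\hookrightarrow S\times_\IK S$ together with $X\times_S Y\cong S\times_{S\times_\IK S}(X\times_\IK Y)$ to reduce directly to \cref{prodthm}.
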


We will give two proofs of this theorem. The first is a geometric reduction to the case of products (\cref{prodthm}) and is very much inspired by \cite[Section 4]{Sch18}. The second one is similar to our proof of \cref{prodthm}.
  
\begin{proof}[First proof of \cref{fiberprodthm}]
In the following, all schemes are understood to be quasi-compact quasi-separated. We will also omit the base ring $\IK$ from the notation. Let us say that a~morphism $X \to S$ of schemes is \emph{good} if the theorem holds for all morphisms $Y \to S$. (This property was called \emph{tensorial base change} in \cite[Section 5.11]{Bra14}.) It is easy to check that every isomorphism is good and that good morphisms are closed under composition, using $X' \times_S Y \cong X' \times_X (X \times_S Y)$ for morphisms $X' \to X \to S$ and the corresponding dual result for bicategorical pushouts.

We claim that immersions are good. This was proven in \cite[Theorem 5.11.5]{Bra14}, but what follows is another proof. If $X \to S$ is an open immersion, we have $X \cong S_I$ for some quasi-coherent idal $I \to \O_S$ of finite presentation (\cref{idals-qcqs}). Let $J \to \O_Y$ be its image under $\Q(S) \to \Q(Y)$. Then by \cref{locbasechange} we have
\[\Q(Y)_J \simeq \Q(S)_I \, \coboxtimes_{\Q(S)} \, \Q(Y).\]
Since we also have  $\Q(Y)_J \simeq \Q(Y_J) \simeq \Q(S_I \times_S Y) \simeq \Q(X \times_S Y)$ as well as $\Q(S)_I \simeq \Q(S_I) \simeq \Q(X)$, it follows that $X \to S$ is good. A similar argument works for closed immersions, using \cref{closedcat}: If $X \to S$ is a closed immersion, we have $X \cong V(I)$ for some quasi-coherent ideal $I \subseteq \O_S$. Let $J \to \O_Y$ be its image under $\Q(S) \to \Q(Y)$. Then by \cref{locbasechange} we have
\[\Q(Y)/J \simeq \Q(S)/I  \, \coboxtimes_{\Q(S)} \, \Q(Y).\]
Since we also have $\Q(Y)/J \simeq \Q(V(J)) \simeq \Q(V(I) \times_S Y) \simeq \Q(X \times_S Y)$ and $\Q(S)/I \simeq \Q(V(I)) \simeq \Q(X)$, it follows that $X \to S$ is good. Since immersions are compositions of closed and open immersions, they are good as well.

Now the general case of a morphism $X \to S$ will follow from the following fiber product diagram (with the obvious morphisms).
\[\begin{tikzcd}[row sep=6ex]
X \times_S Y \ar{r} \ar{d} & X \times Y \ar{d} \\ 
S \ar{r}[swap]{\Delta_S} & S \times S
\end{tikzcd}\]
In fact, $\Delta_S$ is an immersion, so that we already know that the square
\[\begin{tikzcd}[row sep=6ex]
\Q(S \times S) \ar{d} \ar{r} & \Q(X \times Y)  \ar{d} \\ \Q(S) \ar{r} & \Q(X \times_S Y)
\end{tikzcd}\]
is a bicategorical pushout square. By \cref{prodthm} it identifies with the following square (with the obvious tensor functors).
\[\begin{tikzcd}[row sep=6ex]
\Q(S) \coboxtimes \Q(S) \ar{d} \ar{r} & \Q(X) \coboxtimes \Q(Y)  \ar{d} \\ \Q(S) \ar{r} & \Q(X \times_S Y)
\end{tikzcd}\]
Now we are done because of the following fact: If $\A \leftarrow \C \rightarrow \B$ are morphisms in a $2$-category, then a bicategorical pushout of $\C \leftarrow \C \sqcup \C \rightarrow \A \sqcup \B$, provided that these bicategorical coproducts exist, is also a bicategorical pushout of $\A \leftarrow \C \rightarrow \B$.
\end{proof}

\begin{proof}[Second proof of \cref{fiberprodthm}]
All schemes here are understood to be quasi-compact quasi-separated. By \cite[Proposition 5.1]{Bra20} the bicategorical pushout
\[\C \coloneqq \Q(X) \coboxtimes_{\Q(S)} \Q(Y)\]
exists in the $2$-category of cocomplete $\IK$-linear tensor categories. Moreover, it is a locally finitely presentable $\IK$-linear tensor category, and the three tensor functors from $\Q(X)$, $\Q(Y)$ and $\Q(S)$ preserve finitely presentable objects. There is a canonical cocontinuous $\IK$-linear tensor functor $\C \to \Q(X \times_S Y)$, which we claim to be an equivalence. If $S$ is affine, we are done by \cref{prodthm}. In the general case, by \cref{gen-qcqs} we may assume that $S = S_1 \cup S_2$ for quasi-compact open subschemes $S_1$ and $S_2$ such that the theorem is true for schemes over $S_1$, $S_2$ and $S_{1,2} \coloneqq S_1 \cap S_2$. By \cref{idals-qcqs} we find quasi-coherent idals $I_i \to \O_S$ of finite presentation such that $S_i = S_{I_i}$. Let $J_i \to \O_X$ be the image in $\Q(X)$ and let $K_i \to \O_Y$ be the image in $\Q(Y)$. Also, let $L_i \to \O_\C$ be the image in $\C$. All these are finitely presentable idal covers. By simply comparing the universal properties and then applying the hypothesis to $S_1$, we get
\[\C_{L_1} \simeq \Q(X)_{J_1} \coboxtimes_{\Q(S)_{I_1}} \Q(Y)_{K_1} \simeq \Q(X_{J_1} \times_{S_{I_1}} Y_{K_1}).\]
The same holds for $I_2$ and $I_{12} \coloneqq I_1 \otimes I_2$. Now \cref{global-glue} yields
\begin{align*}
\C  & \simeq \C_{L_1} \times_{\C_{L_1 \otimes L_2}} \C_{L_2} \simeq \Q(X_{J_1} \times_{S_{I_1}} Y_{K_1}) \times_{\Q(X_{J_{12}} \times_{S_{I_{12}}} Y_{K_{12}})} \Q(X_{J_2} \times_{S_{I_2}} Y_{K_2}) \\
& \simeq \Q(X \times_S Y). \qedhere
\end{align*}
\end{proof}

\begin{rem}
Because of the construction of the pushout in \cite[Proposition 5.1]{Bra20}, we can derive that $\Q_{\fp}(X \times_S Y)$ is the bicategorical pushout of $\Q_{\fp}(X)$ and $\Q_{\fp}(Y)$ over $\Q_{\fp}(S)$ in the $2$-category of essentially small finitely cocomplete $\IK$-linear tensor categories.
\end{rem}

\begin{rem}
Our proof of \cref{prodthm} uses the existence of bicategorical coproducts, and our second proof of \cref{fiberprodthm} uses the existence of bicategorical pushouts. In the next section, we will give alternative proofs which do not rely on these non-trivial category theoretic results.
\end{rem}

Finally, we present a non-symmetric generalization of \cref{prodthm}, where we replace one of the schemes by a locally finitely presentable tensor category. For this we introduce the following definition of internal quasi-coherent modules.
 
\begin{defi}
Let $X$ be a $\IK$-scheme and let $\C$ be a cocomplete $\IK$-linear category. We can view $X$ as a functor $X : \CAlg_\IK \to \Set$, in particular as a pseudo-functor $X : \CAlg_{\IK} \to \Cat$. The following definition works for every such pseudo-functor. We define another pseudo-functor $\Mod_{\C} : \CAlg_\IK \to \Cat$ by mapping an algebra $R$ to the category $\Mod_{\C}(R)$ of left $R$-modules in $\C$, i.e.\ objects in $\C$ with a left $R$-action, and a homomorphism $R \to S$ to the functor $S \otimes_R - : \Mod_{\C}(R) \to \Mod_{\C}(S)$.

We define $\Q_{\C}(X)$ as the category of pseudo-natural transformations $X \to \Mod_{\C}$, which we call \emph{quasi-coherent $\O_X$-modules internal to~$\C$}. Explicitly, such a module $M$ associates to every $R$-valued point $p \in X(R)$ a left $R$-module $M_p \in \Mod_{\C}(R)$ and to every homomorphism of algebras $\varphi : R \to S$ and every $p \in X(R)$ an isomorphism of $S$-modules $S \otimes_R M_p \myiso M_{X(\varphi)(p)}$ subject to two evident coherence conditions.  The notion of morphisms is obvious. Clearly, $\Q_{\C}(X)$ is a cocomplete $\IK$-linear category.

Moreover, if $\C$ is a cocomplete $\IK$-linear tensor category, then the same is true for $\Q_{\C}(X)$, since it is true for each $\Mod_{\C}(R)$ by using a suitably defined tensor product $\otimes_R$. Notice that every morphism of $\IK$-schemes $f : X \to Y$ induces a cocontinuous $\IK$-linear tensor functor $f^* : \Q_{\C}(Y) \to \Q_{\C}(X)$ which is simply defined by $(f^* M)_p \coloneqq M_{f(p)}$.
\end{defi}

\begin{rem}
Let $X$ be a $\IK$-scheme. Then the ``functorial'' characterization of quasi-coherent $\O_X$-modules yields $\Q_{\Mod(\IK)}(X) \simeq \Q(X)$, which justifies our notation. More generally, for $\IK$-schemes $Y$ (or even algebraic stacks) we have
\[\Q_{\Q(Y)}(X) \simeq \Q(X \times_{\IK} Y).\]
The latter example shows that $X \mapsto \Q_{\Q(Y)}(X)$ is a stack (fibered in categories, not in groupoids) with respect to the Zariski topology. This holds in a much more general setting, see \cref{myStack} below. For a more interesting example, consider $\C=\Ch(\IK)$, the cocomplete tensor category of chain complexes of $\IK$-modules. Then $\Q_{\Ch(\IK)}(X) \simeq \Ch(\Q(X))$ is the cocomplete tensor category of chain complexes of quasi-coherent $\O_X$-modules.
\end{rem}

\begin{thm} \label{myStack}
Let $\C$ be a locally finitely presentable $\IK$-linear tensor category. Then the pseudo-functor $\Sch_{\IK}^{\op} \to \Cat$, $X \mapsto \Q_{\C}(X)$, $f \mapsto f^*$ is a stack with respect to the Zariski topology.
\end{thm}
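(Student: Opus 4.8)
The plan is to verify the two defining properties of a stack for an arbitrary Zariski cover $\{U_\alpha\to X\}$ --- that $U\mapsto\Hom_{\Q_\C(U)}(M|_U,N|_U)$ is a sheaf and that every descent datum is effective, equivalently that the comparison functor from $\Q_\C(X)$ to the category of descent data is an equivalence --- and to reduce both to the one case handled by \cref{global-glue}: $X=\Spec R$ affine, covered by two basic opens $D(f)$ and $D(g)$ with $(f,g)=R$.

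For that case, let $\Mod(\IK)\to\C$ be the unique cocontinuous $\IK$-linear tensor functor, let $A\in\C$ be the image of $R$ (a commutative algebra object), and use the Yoneda lemma for the representable functor of points $\Spec R$ to identify $\Q_\C(\Spec R)\simeq\Mod_A(\C)$, the category of $A$-module objects in $\C$. I would first record that $\Mod_A(\C)$ is again a locally finitely presentable tensor category --- its unit $A$ is finitely presentable because $\Hom_{\Mod_A(\C)}(A,-)$ is the forgetful functor to $\C$ followed by $\Hom_\C(\O_\C,-)$, the finitely presentable objects are the retracts of the free modules $A\otimes_{\O_\C}P$ with $P\in\C_{\fp}$, and these are closed under $\otimes_A$. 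For $t\in R$ let $I_t$ denote multiplication by $t$ seen as a morphism $A\xrightarrow{\,\cdot t\,}A=\O_{\Mod_A(\C)}$; since every endomorphism of the unit is an idal (\cref{idal-prop}(\ref{idalsymtr})), $I_t$ is a finitely presentable idal, and \cref{locex}(2) computes $R_{I_t}(M)=\varinjlim(M\xrightarrow{\cdot t}M\xrightarrow{\cdot t}\cdots)=M[1/t]$, so that $(\Mod_A(\C))_{I_t}$ is the full subcategory of modules on which $t$ acts invertibly, i.e.\ $\Mod_{A[1/t]}(\C)\simeq\Q_\C(D(t))$. Since $I_f\otimes I_g=I_{fg}$, this identifies $(\Mod_A(\C))_{I_f\otimes I_g}\simeq\Q_\C(D(fg))$, and by \cref{expldes} the functors of \cref{twoidal} become the restriction functors $\Q_\C(D(f))\to\Q_\C(D(fg))\leftarrow\Q_\C(D(g))$. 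Because $(f,g)=R$, a relation $af+bg=1$ exhibits $(\,\cdot f,\cdot g\,)\colon A\oplus A\to A$ as a split --- hence regular --- epimorphism, so $(I_f,I_g)$ is an idal cover by \cref{covercrit}(1). Now \cref{global-glue} produces an equivalence of categories $\Q_\C(\Spec R)\simeq\Q_\C(D(f))\times_{\Q_\C(D(fg))}\Q_\C(D(g))$; this is exactly the descent functor for the cover $\{D(f),D(g)\}$, so it encodes effective descent, and being an equivalence it simultaneously gives the sheaf condition for morphisms.

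It remains to reduce the general stack condition to this case, which is standard descent bookkeeping: the stack property may be checked on a family of covers generating the topology; every Zariski cover is refined by a cover by affine opens; and a cover of an affine scheme is refined by a finite basic cover $\Spec R=\bigcup_{i=1}^n D(f_i)$ with $(f_1,\dots,f_n)=R$, which is reduced to $n=2$ by induction, writing $\Spec R=D(f_n)\cup(D(f_1)\cup\cdots\cup D(f_{n-1}))$ --- the organization of such iterated two-step reductions for quasi-compact quasi-separated schemes being exactly what \cref{gen-qcqs} is designed for. Alternatively, the Yoneda/density argument of the previous paragraph globalizes to $\Q_\C(X)\simeq\varprojlim_{\Spec R\to X}\Mod_\bullet(\C)$, which formally reduces the stack property of $\Q_\C$ on all of $\Sch_\IK$ to that of $\Mod_\bullet(\C)$ on the affine site. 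I expect this last reduction to be the only delicate point: one must, in particular, bootstrap the two-element gluing from basic affine opens to arbitrary opens, confirm that the chosen family genuinely generates the Zariski topology, and check that the passages from finite to arbitrary covers and from affine schemes to a general $X$ are licensed by the quasi-compactness of affines and the locality of the descent conditions. The substantive input --- the gluing equivalence over a basic two-element cover --- is supplied entirely by \cref{global-glue}, which was built for exactly this purpose.
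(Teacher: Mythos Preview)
Your approach is genuinely different from the paper's and, at its core, sound. The paper never invokes \cref{global-glue} here; instead it reduces to finite basic covers $\Spec(A)=\bigcup_{i=1}^{s} D(f_i)$ and handles all $s$ at once via the descent-algebra machinery of \cite[Section~4.10]{Bra14}: one checks that $A'=\prod_i A[f_i^{-1}]$ is a \emph{special} descent algebra in $\Mod(A)$, a notion preserved by cocontinuous $\IK$-linear tensor functors, so its image $A'\otimes_\IK \O_\C$ is a descent algebra in $\Mod_\C(A)$. Your route via \cref{global-glue} is more internal to the paper and avoids that external input; the price is that you must organize the reduction from $s$ to $2$ yourself.

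That reduction is where your argument has a real gap. Your two-element case is proved only for \emph{basic} opens $D(f),D(g)$ with $(f,g)=R$, but the inductive step you propose, $\Spec R = D(f_n)\cup\bigl(D(f_1)\cup\cdots\cup D(f_{n-1})\bigr)$, is a two-element cover in which the second member is typically not basic; so your $n=2$ case does not apply as stated, and \cref{gen-qcqs} (a statement about classes of schemes) does not bridge this. You flag the issue (``bootstrap the two-element gluing from basic affine opens to arbitrary opens'') without resolving it. One clean fix, entirely within the paper's toolkit: pick via \cref{idals-qcqs} a finitely presentable idal $K\to R$ with $(\Spec R)_K = D(f_1)\cup\cdots\cup D(f_{n-1})$, push it forward to a finitely presentable idal $K'$ in $\Mod_A(\C)$, and apply \cref{global-glue} to the idal cover $(K',I_{f_n})$. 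You must then check that $(\Mod_A(\C))_{K'}$ agrees with the $(n{-}1)$-fold descent category; this follows by applying the induction hypothesis inside the locally finitely presentable tensor category $(\Mod_A(\C))_{K'}$ to the images of $I_{f_1},\dots,I_{f_{n-1}}$ (which still form an idal cover there since the tensor functor $\Q(W)\simeq\Mod(R)_K\to(\Mod_A(\C))_{K'}$ preserves covers), together with the observation that $R_{I_{f_S}}(K')$ is already invertible for $\emptyset\neq S\subseteq\{1,\dots,n-1\}$, so $(\Mod_A(\C))_{K'\otimes I_{f_S}}\simeq(\Mod_A(\C))_{I_{f_S}}$. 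With that in place your strategy goes through and gives a self-contained alternative to the paper's descent-algebra argument.
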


\begin{proof}
Since we have defined quasi-coherent $\O_X$-modules internal to $\C$ essentially by their restrictions to affine schemes, and every open covering of an affine scheme can be refined by a~finite open covering of basic-open subsets, it suffices to prove the stack property, i.e.\ descent for open coverings of the form $\Spec(A) = \bigcup_{i=1}^{s} D(f_i)$, where $A$ is a commutative \mbox{$\IK$-algebra} and $f_1,\dotsc,f_s \in A$ are elements with $\langle f_1,\dotsc,f_s \rangle = A$. That is, we have to prove that $\Mod_{\C}(A)$ is equivalent to the category of $M_i \in \Mod_{\C}(A[f_i^{-1}])$ for $i=1,\dotsc,s$ with isomorphisms of $A[f_i^{-1},f_j^{-1}]$-modules
\[A[f_i^{-1},f_j^{-1}] \otimes_{A_i} M_i \myiso A[f_i^{-1},f_j^{-1}] \otimes_{A_j} M_j\]
internal to $\C$ satisfying the cocycle condition. A direct proof is possible via a categorification of the well-known case $\C=\Mod(\IK)$. Alternatively, we can use tensor categorical descent theory as developed in \cite[Section 4.10]{Bra14}. Consider the commutative $A$-algebra $A' \coloneqq \prod_{i=1}^{s} A[f_i^{-1}]$. Then, the claim is equivalent to the assertion that $A' \otimes_{\IK} \O_\C$ is a \emph{descent algebra} in $\Mod_{\C}(A)$ as defined in \cite[Definition 4.10.1]{Bra14}. It follows from \cite[Proposition 4.10.9]{Bra14} applied to the smooth surjection $\Spec(A') \to \Spec(A)$ that $A'$ is a \emph{special descent algebra} in $\Mod(A)$ in the sense of \cite[Definition 4.10.6]{Bra14}. Alternatively, one can check this directly by writing $A'$ as the filtered colimit of the sequence of $A$-modules $A^s \to A^s \to A^s \to \cdots$ with the transition maps $(f_1,\dotsc,f_s) : A^s \to A^s$. Special descent algebras are preserved by every cocontinuous \mbox{$\IK$-linear} tensor functor, which is their main advantage as opposed to general descent algebras. We apply this to $\Mod(A) \to \Mod_{\C}(A)$, $M \mapsto M \otimes_{\IK} \O_\C$ and conclude that $A' \otimes_{\IK} \O_\C$ is a special descent algebra in $\Mod_{\C}(A)$. By \cite[Proposition 4.10.8]{Bra14} it is therefore a descent algebra as well.
\end{proof}
 
This enables us to prove the following generalization of \cref{prodthm}.
 
\begin{thm}\label{genprodthm}
Let $X$ be a quasi-compact quasi-separated $\IK$-scheme and let $\C$ be a locally finitely presentable $\IK$-linear tensor category. Then $\Q_{\C}(X)$ is a bicategorical coproduct of $\Q(X)$ and $\C$ in the $2$-category of cocomplete $\IK$-linear tensor categories, so that
\[\Q_{\C}(X) \simeq \Q(X) \coboxtimes_{\IK} \C.\]
\end{thm}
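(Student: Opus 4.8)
The plan is to mimic the proof of \cref{prodthm} almost verbatim, replacing $\Q(Y)$ by $\C$ and Zariski descent for $\Q(-)$ by \cref{myStack}. First I would form the bicategorical coproduct
\[\C' \coloneqq \Q(X) \coboxtimes_{\IK} \C,\]
which exists and is a locally finitely presentable $\IK$-linear tensor category, with the two structure functors $\Q(X) \to \C' \leftarrow \C$ preserving finitely presentable objects, by \cite[Proposition 3.4]{Bra20}. There is a canonical cocontinuous $\IK$-linear tensor functor $\Q(X) = \Q_{\Mod(\IK)}(X) \to \Q_{\C}(X)$ induced by the essentially unique cocontinuous $\IK$-linear tensor functor $\Mod(\IK) \to \C$, and a canonical one $\C \to \Q_{\C}(X)$ sending an object to the corresponding ``constant'' internal module; by the universal property of $\C'$ these assemble into a cocontinuous $\IK$-linear tensor functor $\Phi \colon \C' \to \Q_{\C}(X)$, and the assertion is exactly that $\Phi$ is an equivalence. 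Since $\Phi$ is a cocontinuous $\IK$-linear tensor functor, it suffices to prove that its underlying functor is an equivalence of categories.

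For the affine base case $X = \Spec(A)$, the definition of $\Q_{\C}$ gives $\Q_{\C}(\Spec A) \simeq \Mod_{\C}(A) = \Mod_{\C}(A \otimes_{\IK} \O_{\C})$, the category of modules over the commutative algebra object $A \otimes_{\IK} \O_{\C}$ of $\C$. I would invoke the (standard) universal property of module categories over a commutative algebra object: cocontinuous $\IK$-linear tensor functors $\Mod_{\C}(B) \to \D$ correspond to pairs consisting of a cocontinuous $\IK$-linear tensor functor $G \colon \C \to \D$ and a morphism of commutative algebras $G(B) \to \O_{\D}$ in $\D$ (this is the monoidal shadow of the compatibility of module categories with $\coboxtimes_{\IK}$, cf.\ \cite{CJF13}). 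For $B = A \otimes_{\IK} \O_{\C}$ one has $G(B) = A \otimes_{\IK} \O_{\D}$, so a morphism of commutative algebras $G(B) \to \O_{\D}$ is nothing but a $\IK$-algebra map $A \to \End_{\D}(\O_{\D})$, independently of $G$. Combined with the adjunction from the proof of \cref{prodthm} this yields a natural equivalence
\[\Hom_{\c\otimes/\IK}\bigl(\Mod_{\C}(A), \D\bigr) \simeq \Hom_{\c\otimes/\IK}(\C, \D) \times \Hom_{\c\otimes/\IK}\bigl(\Mod(A), \D\bigr),\]
which exhibits $\Mod_{\C}(A)$ as a bicategorical coproduct; checking that the exhibiting functors are the canonical ones shows that $\Phi$ is an equivalence for $X = \Spec(A)$.

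For the inductive step I would apply \cref{gen-qcqs}, so that it remains to treat $X = X_1 \cup X_2$ with $X_1, X_2$ quasi-compact open, assuming the theorem for $X_1$, $X_2$ and $X_{12} \coloneqq X_1 \cap X_2$. Choose finitely presentable idals $I_i \to \O_X$ with $X_i = X_{I_i}$ (\cref{idals-qcqs}); these form an idal cover, hence so do their images $J_i \to \O_{\C'}$ under $\Q(X) \to \C'$, which are again finitely presentable. Comparing universal properties (equivalently, using \cref{locbasechange}) gives natural equivalences $\C'_{J_i} \simeq \Q(X)_{I_i} \coboxtimes_{\IK} \C \simeq \Q(X_i) \coboxtimes_{\IK} \C$ over $\C'$ and compatible with $\Phi$, and the inductive hypothesis identifies the last category with $\Q_{\C}(X_i)$; likewise $\C'_{J_1 \otimes J_2} \simeq \Q_{\C}(X_{12})$. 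Now \cref{global-glue} applied to $\C'$ and the cover $(J_1, J_2)$ gives
\[\C' \simeq \C'_{J_1} \times_{\C'_{J_1 \otimes J_2}} \C'_{J_2} \simeq \Q_{\C}(X_1) \times_{\Q_{\C}(X_{12})} \Q_{\C}(X_2),\]
and \cref{myStack} identifies the right-hand side with $\Q_{\C}(X)$; tracing the identifications shows the composite is $\Phi$, which completes the induction.

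The main obstacle is the affine case — more precisely, pinning down (or locating a reference for) the universal property of $\Mod_{\C}(B)$ for a commutative algebra object $B$, i.e.\ the compatibility of the formation of module categories with $\coboxtimes_{\IK}$, which is the engine that turns $\Q_{\C}(\Spec A)$ into a bicategorical coproduct. The rest is bookkeeping analogous to the proof of \cref{prodthm}, the only genuine (if routine) point being to check that \cref{myStack} and \cref{global-glue} match up along the idal cover so that it is genuinely $\Phi$, and not merely some abstract equivalence, that is exhibited.
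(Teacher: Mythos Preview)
Your proposal is correct and follows essentially the same route as the paper: the affine base case via the universal property of $\Mod_{\C}(A)$, then the inductive step using \cref{gen-qcqs}, the idal cover $(I_1,I_2)$, \cref{global-glue}, and \cref{myStack}. The ``main obstacle'' you flag is exactly what the paper resolves by citing \cite[Proposition~5.3.1]{Bra14} (applied to the commutative algebra object $A \otimes_{\IK} \O_\C$ in $\C$), so no extra work is needed there.
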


\begin{proof}
There are two canonical cocontinuous $\IK$-linear tensor functors $\Q(X) \to \Q_{\C}(X)$, $M \mapsto (M_p \otimes_{\IK} \O_\C)_{p \in X(R)}$ and $\C \to \Q_{\C}(X)$, $T \mapsto (R \otimes_{\IK} T)_{p \in X(R)}$. If $X$ is affine, say $X = \Spec(A)$, the $2$-categorical Yoneda Lemma implies $\Q_{\C}(X) \simeq \Mod_{\C}(A)$. It follows from \cite[Proposition 5.3.1]{Bra14} applied to the commutative algebra object $A \otimes_{\IK} \O_\C$ in~$\C$ that $\Mod_{\C}(A)$ is the bicategorical coproduct of $\Mod(A) \simeq \Q(X)$ and $\C$. For the general case, we may assume $X = X_1 \cup X_2$ for two quasi-compact open subschemes $X_1,X_2$ such that the claim is true for $X_1$, $X_2$ and $X_1 \cap X_2$. Choose finitely presentable idals $I_i \to \O_X$ with $X_i = X_{I_i}$. By \cite[Proposition 3.4]{Bra20} the bicategorical coproduct $\smash{\D \coloneqq \Q(X) \coboxtimes_{\IK} \C}$ exists in the $2$-category of cocomplete $\IK$-linear tensor categories. Moreover, $\D$ is a locally finitely presentable $\IK$-linear tensor category and $\Q(X) \to \D$ preserves finitely presentable objects. In particular, the images of $I_i$ provide a finitely presentable idal cover $J_i$ of $\D$. We have
\[\D_{J_i} \simeq \Q(X)_{I_i} \coboxtimes_{\IK} \C \simeq \Q(X_i) \coboxtimes_{\IK} \C \simeq \Q_{\C}(X_i),\]
similarly for the intersection. Hence, \cref{global-glue} implies
\[\D  \simeq \D_{J_1} \times_{\D_{J_1 \otimes J_2}} \D_{J_2} \simeq \Q_{\C}(X_1) \times_{\Q_{\C}(X_1 \cap X_2)} \Q_{\C}(X_2).\]
From \cref{myStack} we deduce
\[\Q_{\C}(X_1) \times_{\Q_{\C}(X_1 \cap X_2)} \Q_{\C}(X_2)\simeq \Q_{\C}(X).\qedhere\]
\end{proof}

\begin{rem} \label{stacksprod}
Rydh has shown in \cite{Ryd15} that many algebraic stacks $Y$ have the \emph{completeness property}, meaning that every quasi-coherent $\O_Y$-module is a filtered colimit of quasi-coherent $\O_Y$-modules of finite presentation. This implies that $\Q(Y)$ is a locally finitely presentable tensor category (and that the quasi-coherent $\O_Y$-modules of finite presentation are exactly those which are finitely presentable in the categorical sense). For example, noetherian and quasi-compact quasi-separated Deligne-Mumford stacks have the completeness property. In an unpublished work Rydh has shown that, in fact, every quasi-compact quasi-separated algebraic stack has the completeness property \cite{Ryd16}. Therefore \cref{genprodthm} implies that
\[\smash{\Q(X \times_\IK Y) \simeq \Q(X) \coboxtimes_\IK \Q(Y)}\]
holds for quasi-compact quasi-separated schemes $X$ and quasi-compact quasi-separated algebraic stacks $Y$. We conjecture that this remains valid if $X$ is a quasi-compact quasi-separated algebraic stack. One possible approach to prove this conjecture is a tensor categorical theory of smooth surjective morphisms which is similar to our theory of open immersions.
\end{rem}
 
\section{Local description of tensor functors}                    
\label{sec:localtens}

If a scheme $X$ is covered by two open subschemes $X_1$ and $X_2$, then we know how to describe morphisms with domain $X$. Namely, for every scheme $Y$ there is an isomorphism of sets
\[\Hom(X,Y) \myiso \Hom(X_1,Y) \times_{\Hom(X_1 \cap X_2,Y)} \Hom(X_2,Y).\]
But we can also describe morphisms with codomain $X$: If $h : Y \to X$ is a morphism, then $Y$ is covered by the two open subschemes
\[Y_1 = h^{-1}(X_1),\, Y_2 = h^{-1}(X_2)\]
so that we get morphisms $h_1 : Y_1 \to X_1$, $h_2 : Y_2 \to X_2$. These satisfy
\begin{equation*}\label{eq:a}\tag{a}
Y_1 \cap Y_2 = h_1^{-1}(X_1 \cap X_2) = h_2^{-1}(X_1 \cap X_2)
\end{equation*}
and coincide on the intersection:
\begin{equation*}\label{eq:b}\tag{b}
h_1|_{Y_1 \cap Y_2} = h_2|_{Y_1 \cap Y_2} : Y_1 \cap Y_2 \to X_1 \cap X_2.
\end{equation*}
Conversely, if $Y$ is covered by two open subschemes $Y_1,Y_2$ and two morphisms $h_1 : Y_1 \to X_1$, $h_2 : Y_2 \to X_2$ satisfy (\ref{eq:a}) and (\ref{eq:b}), then there is a unique morphism $h : Y \to X$ which induces $Y_1,Y_2,h_1,h_2$. Thus, if we have descriptions of the functors $\Hom(-,X_1)$, $\Hom(-,X_2)$ and $\Hom(-,X_1 \cap X_2)$, we also get a description of the functor $\Hom(-,X)$, i.e.\ a universal property of $X$. In the special case $X_1 \cap X_2 = \emptyset$, this is saying that the category of schemes is extensive \cite{CLW93}, but what we have just described is a notion of pushout-extensivity along open immersions.
 
We want to find a similar universal property for $\Q(X)$. Although we know that
\[\Q(X) \simeq \Q(X_1) \times_{\Q(X_1 \cap X_2)} \Q(X_2),\]
this describes $\Hom_{\c\otimes}(-,\Q(X))$, but our goal is to describe $\Hom_{\c\otimes}(\Q(X),-)$. In order to achieve this, we will prove a general result about suitable tensor categories. The bijection above then becomes an equivalence of categories. This is already the case in the situation above if $X$ is replaced by an algebraic stack.

We will mainly work with locally finitely presentable tensor categories, because the main results of \cref{sec:tensorloc} only hold for them, but with a reduction lemma (\cref{lfpred}) it will be possible to generalize some of our results to arbitrary cocomplete tensor categories.
 
\begin{defi} \label{locdat}
Let $\C,\D$ be locally finitely presentable tensor categories. We denote by
\[\Hom_{\c\otimes\fp}(\C,\D)\]
the category of cocontinuous tensor functors $\C \to \D$ which preserve finitely presentable objects (notice that the pullback functor $f^*$ of a morphism $f$ of quasi-compact quasi-separated schemes has this property). Let $I_1 \to \O_\C \leftarrow I_2$ be a finitely presentable idal cover of $\C$ (by which we mean that $I_1,I_2$ are finitely presentable). We define a category
\[\smash{\Hom_{\c\otimes\fp}^{I_1,I_2}(\C,\D)},\]
a ``localized version'' of $\Hom_{\c\otimes\fp}(\C,\D)$, as follows. An object of this category consists of the following data:
\begin{enu}
\item a finitely presentable idal cover $J_1 \to \O_\D \leftarrow J_2$ of $\D$,
\item cocontinuous tensor functors $H_1 : \C_{I_1} \to \D_{J_1}$, $H_2 : \C_{I_2} \to \D_{J_2}$ which preserve finitely presentable objects,
\item\label{eq:1} isomorphisms of idals $\pi_1 : R_{J_1} J_2 \myiso H_1(R_{I_1} I_2)$ in $\D_{J_1}$ and $\pi_2 : R_{J_2} J_1 \myiso H_2(R_{I_2} I_1)$ in~$\D_{J_2}$,
\item\label{eq:2} and an isomorphism $\delta$ between the two tensor functors $\C_{I_1 \otimes I_2} \rightrightarrows \D_{J_1 \otimes J_2}$ defined by the following diagram:
\[\begin{tikzcd}[row sep=3ex]
& (\C_{I_1})_{R_{I_1} I_2} \ar{rr}{(H_1)_{R_{I_1} I_2}} & &(\D_{I_1})_{H_1 R_{I_1} I_2} \ar{r}{\pi_1^*} \ar[double equal sign distance,shorten >=1.5ex,shorten <=1.5ex,>=stealth]{dd}[swap]{\sim} \ar[phantom]{dd}[right]{\scriptstyle\delta} & (\D_{I_1})_{R_{J_1} J_2} \ar{dr}{\sim} \\
\C_{I_1 \otimes I_2} \ar{ur}{\sim} \ar{dr}[swap]{\sim} &&&&& \D_{J_1 \otimes J_2}  \\
& (\C_{I_2})_{R_{I_2} I_1} \ar{rr}[swap]{(H_2)_{R_{I_2} I_1}} && (\D_{I_2})_{H_2(R_{I_2} I_1)} \ar{r}[swap]{\pi_2^*} & (\D_{I_2})_{R_{J_2} J_1} \ar{ur}[swap]{\sim}
\end{tikzcd} \]
\end{enu}
See \cref{formalprop} for the notation and \cref{twoidal} for the equivalences that we have used. Roughly speaking, (\ref{eq:1}) corresponds to (\ref{eq:a}), while (\ref{eq:2}) corresponds to (\ref{eq:b}). A morphism $(J_i,H_i,\pi_i,\delta) \to (J'_i,H'_i,\pi'_i,\delta')$ between two such objects consists of
\begin{enu}
\item morphisms of idals $f_i : J_i \to J'_i$ in $\D$ for $i=1,2$,
\item morphisms of tensor functors $\eta_i : H_i \to f_i^* \circ H'_i$ for $i=1,2$,
\end{enu}
\[\begin{tikzcd}[row sep=6ex]
\C_{I_i} \ar{rr}{H_i} \ar{dr}[swap]{H'_i} & \ar[double equal sign distance,shorten >=0.5ex,shorten <=0.5ex,>=stealth]{d}{\eta_i} & \D_{J_i} \\ 
& \D_{J'_i} \ar{ur}[swap]{f_i^*} & 
\end{tikzcd} \]
such that the following two coherence conditions hold. Firstly, for $i \neq j$ it is required that
\[\begin{tikzcd}[row sep=3ex]
H_i(R_{I_i} I_j) \ar{rr}{\eta_i} && f_i^*(H'_i(R_{I_i} I_j)) \\\\
R_{J_i} J_j \ar{uu}{\pi_i}[swap]{\sim} \ar{dr}[swap]{R_{J_i} f_j} & & f_i^*(R_{J'_i} J'_j) \ar{uu}{\sim}[swap]{\pi'_i} \\
 & R_{J_i} J'_j \ar{ur}[swap]{\sim} &
\end{tikzcd}\]
commutes. Secondly, we require that the composition of morphisms of tensor functors
\[\begin{tikzcd}[row sep=7.2ex, column sep=6ex]
 & (\C_{I_1})_{R_{I_1} I_2} \ar{rr}{(H_1)_{R_{I_1} I_2}} && (\D_{J_1})_{H_1(R_{I_1} I_2)} \ar{r}{\pi_1^*} \ar[double equal sign distance,shorten >=0.8ex,shorten <=0.8ex,>=stealth]{d}[left]{\sim} \ar[phantom]{d}[right]{\scriptstyle\delta}  & (\D_{J_2})_{R_{J_2} J_1} \ar{dr}{\sim} & \\
\C_{I_1 \otimes I_2} \ar{ur}{\sim} \ar{r}[swap]{\sim} & (\C_{I_2})_{R_{I_2} I_1} \ar{rr}{(H_2)_{R_{I_2} I_1}}  \ar{drr}[swap]{(H'_2)_{R_{I_2} I_1}\!\!} && (\D_{J_2})_{H_2(R_{I_2} I_1)} \ar{r}{\pi_2^*} \ar[double equal sign distance,shorten >=0.8ex,shorten <=0.8ex,>=stealth]{d}[right]{\scriptstyle(\eta_2)_{R_{I_2} I_1}} & (\D_{J_1})_{R_{J_1} J_2} \ar{r}{\sim} & \D_{J_1 \otimes J_2}\\ 
& & & (\D_{J'_2})_{H'_2(R_{I_2} I_1)} \ar{r}[swap]{{\pi'_2}^*} & (\D_{J'_2})_{R_{J'_2} J'_1} \ar{r}[swap]{\sim} & \D_{J'_1 \otimes J'_2} \ar{u}[swap]{(f_1 f_2)^*}
\end{tikzcd}\]
equals the composition of morphisms of tensor functors
\[\begin{tikzcd}[row sep=7.2ex, column sep=6ex]
  & && (\D_{J_1})_{H_1(R_{I_1} I_2)} \ar{r}{\pi_1^*} \ar[double equal sign distance,shorten >=0.8ex,shorten <=0.8ex,>=stealth]{d}[right]{\scriptstyle(\eta_1)_{R_{I_1} I_2}}  & (\D_{J_2})_{R_{J_2} J_1} \ar{r}{\sim} &  \D_{J_1 \otimes J_2}\\
\C_{I_1 \otimes I_2} \ar{dr}[swap]{\sim} \ar{r}{\sim} & (\C_{I_1})_{R_{I_1} I_2} \ar{rr}[swap]{(H'_1)_{R_{I_1} I_2}} \ar{urr}{(H_1)_{R_{I_1} I_2}\!\!}  && (\D_{J'_1})_{H'_1(R_{J'_1} I_2)} \ar{r}{{\pi'_1}^*} \ar[double equal sign distance,shorten >=0.5ex,shorten <=0.8ex,>=stealth]{d}[left]{\sim} \ar[phantom]{d}[right]{\scriptstyle\delta'} & (\D_{J'_1})_{R_{J'_1} J'_2} \ar{r}{\sim} &  \D_{J'_1 \otimes J'_2} \ar{u}[swap]{(f_1 f_2)^*} \\
 & (\C_{I_2})_{R_{I_2} I_1} \ar{rr}[swap]{(H'_2)_{R_{I_2} I_1}} && (\D_{J'_2})_{H'_2(R_{I_2} I_1)} \ar{r}[swap]{{\pi'_2}^*} & (\D_{J'_2})_{R_{J'_2} J'_1}\mydot \ar{ur}[swap]{\sim} & 
\end{tikzcd}\]
The definition of the composition of morphisms is obvious. If $\C$ and $\D$ are $\IK$-linear, we can analogously define categories $\Hom_{\c\otimes\fp\!/ \IK}(\C,\D)$ and $\smash{\Hom_{\c\otimes\fp\!/ \IK}^{I_1,I_2}(\C,\D)}$ of $\IK$-linear tensor functors.
\end{defi}

\begin{thm} \label{lokal-def}
Let $\C,\D$ be locally finitely presentable tensor categories. Let $I_1 \to \O_\C \leftarrow I_2$ be finitely presentable idal cover of $\C$. Then, there is an equivalence of categories
\[\Hom_{\c\otimes\fp}(\C,\D) \myiso \Hom_{\c\otimes\fp}^{I_1,I_2}(\C,\D).\]
A similar result holds in the $\IK$-linear case.
\end{thm}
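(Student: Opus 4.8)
The plan is to construct the equivalence in both directions and check they are mutually pseudo-inverse. In the direction
\[
\Phi : \Hom_{\c\otimes\fp}(\C,\D) \to \Hom_{\c\otimes\fp}^{I_1,I_2}(\C,\D),
\]
given $F : \C \to \D$ preserving finitely presentable objects, I would set $J_i \coloneqq F(I_i)$, which is a finitely presentable idal cover of $\D$ since $F$ is a finitely cocontinuous tensor functor (see \cref{coverdef} and the remark there). The functors $H_i \coloneqq F_{I_i} : \C_{I_i} \to \D_{J_i}$ come from \cref{formalprop}(2) and preserve finitely presentable objects by the same; the isomorphisms $\pi_i$ and the $2$-cell $\delta$ are the canonical coherence isomorphisms built from $F_{I_i} \circ R_{I_i} \cong R_{J_i} \circ F$ together with $F(I_1 \otimes I_2) \cong F(I_1) \otimes F(I_2)$. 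On morphisms, a $2$-cell $\mu : F \to F'$ induces $f_i \coloneqq \mu_{I_i} : J_i \to J'_i$ (a morphism of idals, since $\mu$ is a morphism of tensor functors and hence compatible with $e_i : I_i \to \O_\C$) and $\eta_i \coloneqq \mu_{I_i}$ in the localized sense; the two coherence conditions are straightforward diagram chases in the canonical isomorphisms.

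For the reverse direction $\Psi$, given data $(J_i, H_i, \pi_i, \delta)$, I would reconstruct $F : \C \to \D$ by gluing. Concretely, for $M \in \C$ form the pullback
\[
F(M) \coloneqq H_1(R_{I_1} M) \times_{\,\delta_M\,} H_2(R_{I_2} M)
\]
in $\D$, where the maps into the common object in $\D_{J_1 \otimes J_2} = \D_{J_1} \cap \D_{J_2}$ are obtained from $H_i(R_{I_i}M)$ via the localization functors $\overline{R}$, and where the identification of the two targets uses $\delta$ evaluated at $R_{I_1 \otimes I_2} M$. This is exactly the functor $G$ from the proof of \cref{global-glue} applied to the object $(H_1(R_{I_1}M), H_2(R_{I_2}M), \delta_M)$ of the bicategorical pullback $\D_{J_1} \times_{\D_{J_1 \otimes J_2}} \D_{J_2} \simeq \D$. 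Cocontinuity of $F$ follows because each $R_{I_i}$, each $H_i$, and finite limits against the finitely presentable idals commute with filtered colimits (and finite colimits are handled as in \cref{global-glue}); the tensor structure on $F$ is induced from those on $H_1, H_2$ together with the fact that $\otimes$ distributes over the gluing pullback in the relevant sense, using the cover conditions and \cref{covpot}. Preservation of finitely presentable objects follows from \cref{fplok}.

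The main obstacle, and where most of the work lies, is verifying that $\Phi$ and $\Psi$ are mutually pseudo-inverse as \emph{functors of categories} (not merely bijective on objects): one must produce natural isomorphisms $\Psi \Phi \cong \id$ and $\Phi \Psi \cong \id$ compatible with the rather elaborate morphism data $(f_i, \eta_i)$ and the two coherence conditions in \cref{locdat}. For $\Psi \Phi \cong \id$ the key input is the isomorphism $G \circ F \cong \id_\C$ from \cref{global-glue}, applied with $\D$ in place of $\C$; for $\Phi \Psi \cong \id$ one needs $F \circ G \cong \id$ from the same theorem, together with the observation that under the identification $\D \simeq \D_{J_1}\times_{\D_{J_1\otimes J_2}}\D_{J_2}$ the localizations $R_{J_i}$ of the reconstructed $F(M)$ recover $H_i(R_{I_i}M)$ (again by the computation in the proof of \cref{global-glue}, where $R_{I_i}$ commutes with the pullback $G$). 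The $\IK$-linear case is identical, carrying along $\IK$-linearity through \cref{formalprop}(1) and the linear version of \cref{global-glue}; I would simply remark that all constructions above are $\IK$-linear and the same argument applies.
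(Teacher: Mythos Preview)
Your proposal is correct and follows essentially the same approach as the paper. The only cosmetic difference is that the paper constructs the inverse functor more abstractly as a functor between the bicategorical pullbacks $\C_{I_1} \times_{\C_{I_1 \otimes I_2}} \C_{I_2} \to \D_{J_1} \times_{\D_{J_1 \otimes J_2}} \D_{J_2}$ induced componentwise by $H_1,H_2$ and $\delta$, and then invokes the equivalences of \cref{global-glue} on both sides, whereas you unpack the target equivalence explicitly as the pullback formula $G$ from the proof of \cref{global-glue}; this buys the paper a cleaner justification of cocontinuity and the tensor structure (they come for free from the equivalence), while your version is more concrete but requires the extra remarks you give about filtered colimits commuting with the finite pullback.
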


\begin{proof}
Assume that $H : \C \to \D$ is a cocontinuous tensor functor preserving finitely presentable objects. Then the finitely presentable idals $J_i \coloneqq H(I_i)$ form a cover of $\D$, and $H$ induces cocontinuous tensor functors $H_i \coloneqq H_{I_i} : \C_{I_i} \to \D_{J_i}$ which also preserve finitely presentable objects (\cref{formalprop}). They are characterized by isomorphisms $ R_{J_i} H \myiso H_i R_{I_i}$, which evaluated at $I_j$ for $i \neq j$ yield isomorphisms $\pi_i : R_{J_i} J_j \myiso H_i(R_{I_i} I_j)$. The composition
\[\begin{tikzcd}
\C_{I_1 \otimes I_2} \ar{r}{\sim} & (\C_{I_1})_{R_{I_1} I_2} \ar{rr}{(H_1)_{R_{I_1} I_2}} && (\D_{J_1})_{H_1(R_{I_1} I_2)} \ar{r}{\pi_1^*} & \D_{R_{J_1} J_2} \ar{r}{\sim} & \D_{J_1 \otimes J_2}
\end{tikzcd}\]
is canonically isomorphic to $H_{I_1 \otimes I_2} : \C_{I_1 \otimes I_2} \to \D_{H(I_1 \otimes I_2)} \myiso \D_{J_1 \otimes J_2}$. The same holds for the composition with exchanged indices, so that we get an isomorphism of tensor functors $\delta : \pi_1^* (H_1)_{R_{I_1} I_2} \myiso \pi_2^* (H_2)_{R_{I_2} I_1}$ as required in \cref{locdat}. If $\eta : H \to H'$ is a morphism of tensor functors, then we get induced morphisms of idals $f_i \coloneqq \eta(I_i) : J_i \to J'_i$ and morphisms of tensor functors $\eta_i : H_{i} \to f_i^* H'_i$ induced by
\[\begin{tikzcd}
H_{i} R_{I_i} \ar{r}{\sim} & R_{J_i} H \ar{rr}{R_{J_i} \eta} & & R_{J_i} H' \ar{r}{\sim} & f_i^* R_{J'_i} H' \ar{r}{\sim} & f_i^* H'_i R_{I_i}.
\end{tikzcd}\]
One can check that the coherence conditions in \cref{locdat} are satisfied. This describes a~functor
\[\Hom_{\c\otimes\fp}(\C,\D) \to \Hom_{\c\otimes\fp}^{I_1,I_2}(\C,\D).\]
Conversely, let $(J_i,H_i,\pi_i,\delta)$ be an object of $\Hom_{\c\otimes\fp}^{I_1,I_2}(\C,\D)$. We let $H_{1,2} : \C_{I_1 \otimes I_2} \to \D_{J_1 \otimes J_2}$ be one of the tensor functors between which $\delta$ operates. Then $H_1,H_2,H_{1,2}$ and $\delta$ induce a~cocontinuous tensor functor
\[\C_{I_1} \times_{\C_{I_1 \otimes I_2}} \C_{I_2} \to \D_{J_1} \times_{\D_{J_1 \otimes J_2}} \D_{J_2},\]
which by \cref{global-glue} corresponds to a cocontinuous tensor functor $H : \C \to \D$. This functor preserves finitely presentable objects because of \cref{fplok}. We leave it to the reader to describe the functor
\[\Hom_{\c\otimes\fp}(\C,\D) \to \Hom_{\c\otimes\fp}^{I_1,I_2}(\C,\D)\]
on morphisms. That these functors are pseudo-inverse to each other follows essentially from \cref{global-glue}.
\end{proof}

\begin{rem}
\cref{lokal-def} in conjunction with \cref{getall} and \cite[Proposition 2.2.3]{BC14} (which is the affine case) make it theoretically possible to describe the category
\[\Hom_{\c\otimes\fp\!/ \IK}(\Q(X),\C)\]
for every quasi-compact quasi-separated $\IK$-scheme $X$ and every locally finitely presentable \mbox{$\IK$-linear} tensor category $\C$; we will see some examples below. It also follows easily that $f \mapsto f^*$ induces an equivalence
\[\Hom(Y,X) \myiso \Hom_{\c\otimes\fp\!/ \IK}(\Q(X),\Q(Y))\]
for any pair of quasi-compact quasi-separated $\IK$-schemes $X,Y$. This was already proven in \cite{BC14} without the $\fp$-condition. In particular, the category $\Hom_{\c\otimes\fp\!/ \IK}(\Q(X),\Q(Y))$ is essentially discrete (i.e.\ a setoid). Below we will prove this in a more general setting. 
\end{rem}
 
\begin{prop} \label{ess-diskret}
Let $X$ be a quasi-compact quasi-separated $\IK$-scheme and let $\C$ be a locally finitely presentable $\IK$-linear tensor category. Then the category $\Hom_{\c\otimes\fp\!/ \IK}(\Q(X),\C)$ is essentially discrete.
\end{prop}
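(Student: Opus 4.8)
The plan is to reduce to the affine case using the gluing machinery of \cref{lokal-def} and \cref{global-glue}, together with the already-established affine statement. First I would recall that when $X = \Spec(A)$ is affine, the equivalence $\Hom_{\c\otimes\fp\!/\IK}(\Q(X),\C) \simeq \Hom_{\CAlg_\IK}(A,\End_\C(\O_\C))$ (the $\fp$-version of \cite[Proposition 2.2.3]{BC14}; note $f^*$ automatically preserves finitely presentable objects in the affine case) exhibits $\Hom_{\c\otimes\fp\!/\IK}(\Q(X),\C)$ as (equivalent to) a discrete category, since the right-hand side is a set. So the proposition holds for affine $X$.

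Next I would set up the dévissage. Using \cref{gen-qcqs}, it suffices to show: if $X = X_1 \cup X_2$ is covered by quasi-compact open subschemes with $X_1, X_2, X_{1,2} := X_1 \cap X_2$ such that the proposition holds for each of them, then it holds for $X$. By \cref{idals-qcqs} choose finitely presentable idals $I_1 \to \O_X \leftarrow I_2$ with $X_i = X_{I_i}$; these form an idal cover by \cref{getall}, and by \cref{locex}(3) we have $\Q(X)_{I_i} \simeq \Q(X_i)$ and $\Q(X)_{I_1 \otimes I_2} \simeq \Q(X_{1,2})$. Now invoke \cref{lokal-def}: there is an equivalence
\[\Hom_{\c\otimes\fp\!/\IK}(\Q(X),\C) \myiso \Hom_{\c\otimes\fp\!/\IK}^{I_1,I_2}(\Q(X),\C).\]
An object of the right-hand category is a tuple $(J_i, H_i, \pi_i, \delta)$ where $H_i \colon \Q(X_i) \to \C_{J_i}$; by the inductive hypothesis applied to $X_1$ and $X_2$, the $H_i$ are rigid (any two parallel morphisms of tensor functors between them are equal and invertible). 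The morphisms in $\Hom_{\c\otimes\fp\!/\IK}^{I_1,I_2}$ consist of idal morphisms $f_i \colon J_i \to J'_i$ and $2$-isomorphisms $\eta_i$; by \cref{isiso}(1) — or rather by the observation in \cref{formalprop}(\ref{idalpull}) — the functor $f_i^*$ is determined and any morphism $\eta_i$ is forced once the $\pi_i$ and $\delta$ are fixed, because the target categories $\C_{J_i}$ are localizations and the $H_i$ are essentially rigid. Hence every morphism in $\Hom_{\c\otimes\fp\!/\IK}^{I_1,I_2}(\Q(X),\C)$ is an isomorphism and is uniquely determined by its source and target, i.e.\ this category is essentially discrete; transporting along the equivalence, so is $\Hom_{\c\otimes\fp\!/\IK}(\Q(X),\C)$.

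The main obstacle I anticipate is the bookkeeping in the last step: one must check that a morphism $(f_i, \eta_i)$ in $\Hom_{\c\otimes\fp\!/\IK}^{I_1,I_2}$ is \emph{uniquely} determined, which requires that (a) the idal morphisms $f_i$ are uniquely determined — this follows since the category of finitely presentable idals inverted by a cocontinuous tensor functor to a localization is rigid in the relevant sense, and more directly from the fact that the $\pi_i$ already pin down $J_i \cong H(I_i)$ — and (b) the $2$-cells $\eta_i$ are forced, which uses essential discreteness of $\Hom_{\c\otimes\fp\!/\IK}(\Q(X_i),\C)$ from the inductive hypothesis together with the coherence conditions relating $\eta_i$ to $\pi_i, \delta$. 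A cleaner route, which I would actually prefer, sidesteps \cref{lokal-def}: observe that $\Q(X) \simeq \Q(X_1) \times_{\Q(X_{1,2})} \Q(X_2)$ and that $\Hom_{\c\otimes\fp\!/\IK}(-, \C)$ sends this bicategorical pullback (of cocomplete tensor categories, under $\Q(X_{1,2})$) to a bicategorical pushout of categories, i.e.\ $\Hom_{\c\otimes\fp\!/\IK}(\Q(X),\C)$ receives a functor from (a suitable $2$-colimit of) the three essentially discrete categories $\Hom_{\c\otimes\fp\!/\IK}(\Q(X_i),\C)$ and $\Hom_{\c\otimes\fp\!/\IK}(\Q(X_{1,2}),\C)$; since a $2$-colimit diagram of essentially discrete categories along fully faithful functors is essentially discrete, the claim follows. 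Either way the arithmetic is routine; the care needed is entirely in verifying that no nontrivial automorphisms can appear through the gluing data.
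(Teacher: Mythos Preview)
Your main route follows the paper's skeleton exactly: affine case via the algebra description, then d\'evissage via \cref{gen-qcqs} and \cref{lokal-def}. But the decisive step is hand-waved. From the inductive hypothesis you correctly get that each $\eta_i : H_i \to f_i^* \circ H'_i$ is an isomorphism (since $\Hom_{\c\otimes\fp\!/\IK}(\Q(X_i),\C_{J_i})$ is essentially discrete). What you do \emph{not} explain is why the idal morphisms $f_i : J_i \to J'_i$ are isomorphisms; nothing in ``the $\pi_i$ already pin down $J_i \cong H(I_i)$'' or in \cref{formalprop}(\ref{idalpull}) gives this. The paper's argument here is specific: the first coherence square in \cref{locdat} forces $R_{J_i}(f_j)$ to be an isomorphism for $i \neq j$; then, since $R_{J_j}(J_j) \to \O_{\C_{J_j}}$ is already an isomorphism, the factorization through $R_{J_j}(f_j)$ makes $R_{J_j}(J'_j) \to \O_{\C_{J_j}}$ a split epimorphism, hence an isomorphism by \cref{isiso}, so $R_{J_j}(f_j)$ is an isomorphism too; finally \cref{global-glue} shows $f_j$ itself is an isomorphism. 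The paper then treats uniqueness of automorphisms as a separate step (part (b)), localizing $\eta$ to $\eta_i = \id$ and invoking \cref{global-glue} once more. You would need to supply both of these arguments.

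Your ``cleaner route'' does not work and is essentially circular. The contravariant $2$-functor $\Hom_{\c\otimes\fp\!/\IK}(-,\C)$ sends bicategorical \emph{colimits} to limits, not limits to colimits; the equivalence $\Q(X) \simeq \Q(X_1) \times_{\Q(X_{1,2})} \Q(X_2)$ is a bicategorical \emph{pullback} and gives no direct control over $\Hom_{\c\otimes\fp\!/\IK}(\Q(X),\C)$. If $\Q(X)$ were the bicategorical \emph{pushout} of $\Q(X_1) \leftarrow \Q(X_{1,2}) \to \Q(X_2)$ you would indeed be done at once --- but it is not, and the entire point of \cref{lokal-def} is that a functor out of $\Q(X)$ is not merely a compatible pair of functors out of the $\Q(X_i)$, but carries the extra datum of an idal cover of the target. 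So this shortcut assumes precisely what \cref{lokal-def} is there to provide.
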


\begin{proof}
If $X=\Spec(A)$ is affine, then
\[\Hom_{\c\otimes\fp\!/ \IK}(\Q(X),\C) \simeq \Hom_{\CAlg_{\IK}}(A,\End(\O_\C))\]
by \cite[Proposition 2.2.3]{BC14}, and the latter category is discrete. In the general case, we may assume by \cref{gen-qcqs} that $X$ is covered by two quasi-compact open subschemes $X_1$ and $X_2$ such that the claim is true for $X_1$, $X_2$ and $X_1 \cap X_2$. We choose finitely presentable idals $I_i \to \O_X$ such that $X_i = X_{I_i}$. We have to show (a) that every morphism between two objects in $\Hom_{\c\otimes\fp\!/ \IK}(\Q(X),\C)$ is an isomorphism, and (b) that any automorphism of an object equals the identity.

We first prove (a). By \cref{lokal-def} we may work in the category $\smash{\Hom_{\c\otimes\fp\!/ \IK}^{I_1,I_2}(\Q(X),\C)}$. Consider two objects $(J_i,H_i,\pi_i,\delta)$ and $(J'_i,H'_i,\pi'_i,\delta')$ in that category. A morphism is given by morphisms of idals $f_i : J_i \to J'_i$ and morphisms of tensor functors $\eta_i : H_i \to f_i^* \circ H'_i$ for $i=1,2$ such that for $i \neq j$ the diagram
\[\begin{tikzcd}[row sep=3ex]
H_i(R_{I_i} I_j) \ar{rr}{\eta_i} && f_i^*(H'_i(R_{I_i} I_j)) \\\\
R_{J_i} J_j \ar{uu}{\pi_i}[swap]{\sim} \ar{dr}[swap]{R_{J_i} f_j} & & f_i^*(R_{J'_i} J'_j) \ar{uu}{\sim}[swap]{\pi'_i} \\
 & R_{J_i} J'_j \ar{ur}[swap]{\sim} &
\end{tikzcd}\]
commutes; we will not need the other coherence diagram. Since $H_i$ and $f_i^* \circ H'_i$ are defined on $\Q(X)_{I_i} \simeq \Q(X_{I_i})$ and the claim is true for $X_{I_i}$, it follows that $\eta_i$ is an isomorphism. The diagram implies that $R_{J_i} f_j$ is an isomorphism. Consider the following commutative diagram.
\[\begin{tikzcd}[row sep=6ex, column sep=5ex]
R_{J_j} J_j \ar{rr}{R_{J_j} f_j} \ar{dr}[swap]{\sim} &&  R_{J_j} J'_j \ar{dl} \\ & \O_{\C_{J_j}} & 
\end{tikzcd}\]
It shows that $R_{J_j} J'_j \to \O_{\C_{J_j}}$ is a split epimorphism. But then it has to be an isomorphism by \cref{isiso}. Hence, the diagram shows that $R_{J_j} f_j$ is an isomorphism. Since $R_{J_i} f_j$ and $R_{J_j} f_j$ are isomorphisms, $f_i$ is an isomorphism by \cref{global-glue}, and we are done.

To prove (b), consider an automorphism $\eta : H \to H$ of $H \in \Hom_{\c\otimes\fp\!/ \IK}(\Q(X),\C)$. Let $J_i = H(I_i)$. We define an automorphism $\eta_i : H_{I_i} \to H_{I_i}$ of $H_{I_i} : \Q(X_{I_i}) \to \C_{J_i}$ by requiring that $\eta_i R_{I_i}$ equals
\[\begin{tikzcd}
H_{I_i} R_{I_i} \ar{r}{\sim} & R_{J_i} H \ar{r}{R_{J_i} \eta} & R_{J_i} H \ar{r}{\sim} & H_{I_i} R_{I_i}.
\end{tikzcd}\]
By assumption $\eta_i$ equals the identity. This means that $R_{J_i} \eta$ is the identity. By \cref{global-glue} this entails that $\eta$ is the identity.
\end{proof}

\cref{ess-diskret} can actually be generalized to arbitrary cocomplete tensor categories, using the following lemma.

\begin{lemma} \label{lfpred}
Let $\C$ be a locally finitely presentable tensor category. Let $\D$ be any cocomplete tensor category. There is an equivalence of categories
\[{\varinjlim}_{\D'} \Hom_{\c\otimes\fp}(\C,\Ind(\D')) \myiso \Hom_{\c\otimes}(\C,\D),\]
where $\D'$ runs through all essentially small subcategories of $\D$ which are closed under finite colimits and finite tensor products. Here, the Ind-category $\Ind(\D')$ is a locally finitely presentable tensor category. The corresponding statements hold in the $\IK$-linear case. 
\end{lemma}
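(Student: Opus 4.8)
The plan is to reduce both sides to categories of \emph{finitely cocontinuous} tensor functors out of $\C_{\fp}$, after which the claim becomes a routine compactness statement coming from $\C_{\fp}$ being essentially small. Throughout, write $\Hom_{\fc\otimes}(\A,\B)$ for the category of finitely cocontinuous tensor functors from an essentially small finitely cocomplete tensor category $\A$ to a cocomplete tensor category $\B$. Two reductions are needed. First, since $\C$ is locally finitely presentable, $\C\simeq\Ind(\C_{\fp})$ and $\C_{\fp}$ is an essentially small finitely cocomplete tensor category (it contains $\O_\C$ and is closed under finite colimits and finite tensor products); by the universal property of $\Ind$ equipped with its convolution tensor structure (cf.\ \cite{AR94}, \cite[Section 6.5]{Kel05}, \cite{LF13}, and as used in \cite{Bra20}), restriction along $\C_{\fp}\hookrightarrow\C$ gives an equivalence $\Hom_{\c\otimes}(\C,\D)\simeq\Hom_{\fc\otimes}(\C_{\fp},\D)$ for every cocomplete tensor category $\D$. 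Second, any essentially small finitely cocomplete category is idempotent complete, because the coequalizer of $(e,\id)$ splits an idempotent $e$; hence for such a tensor category $\D'$ one has $\Ind(\D')_{\fp}=\D'$, the Ind-completion $\Ind(\D')$ is a locally finitely presentable tensor category (the standard fact \cite{AR94} that $\Ind$ of an essentially small finitely cocomplete category is locally finitely presentable, with the tensor structure transported by convolution from the finite tensor products of $\D'$), and restriction along $\C_{\fp}\hookrightarrow\C$ identifies $\Hom_{\c\otimes\fp}(\C,\Ind(\D'))$ with $\Hom_{\fc\otimes}(\C_{\fp},\D')$; here the $\fp$-condition ensures the restriction lands in $\Ind(\D')_{\fp}=\D'$, while conversely the left Kan extension of a finitely cocontinuous tensor functor $\C_{\fp}\to\D'$ automatically preserves finitely presentable objects, since the finitely presentable objects of $\C$ are exactly $\C_{\fp}$. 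Under these identifications the assertion becomes
\[{\varinjlim}_{\D'}\Hom_{\fc\otimes}(\C_{\fp},\D')\myiso\Hom_{\fc\otimes}(\C_{\fp},\D),\]
with comparison functor induced by the full inclusions $\D'\hookrightarrow\D$.

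Next I would check that the indexing poset is directed, so that the left-hand side is a genuine filtered colimit of categories. Given an essentially small collection of objects and morphisms of $\D$, close it under finite colimits, finite tensor products and the unit object by an $\omega$-indexed induction; a cardinality estimate shows the resulting full subcategory $\D'$ stays essentially small, and by construction it is an essentially small finitely cocomplete tensor category whose finite colimits and finite tensor products are computed in $\D$. In particular, any two admissible subcategories lie in a third, the transition functors $\Ind(\D')\to\Ind(\D'')$ are $\Ind$ of full inclusions and hence cocontinuous tensor functors preserving finitely presentable objects, and the colimit has the usual explicit description: objects are pairs $(\D',F)$, and a morphism $(\D'_1,F_1)\to(\D'_2,F_2)$ is represented by a monoidal natural transformation defined over some common enlargement, modulo further enlargement.

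The comparison functor is essentially surjective: for $F\in\Hom_{\fc\otimes}(\C_{\fp},\D)$, apply the closure construction to a skeleton of the image of $F$ to obtain a $\D'$ through which $F$ corestricts to a finitely cocontinuous tensor functor $F'\colon\C_{\fp}\to\D'$ (using that $\D'\hookrightarrow\D$ is full and that its finite colimits and finite tensor products agree with those of $\D$), and $F'$ maps to $F$. It is full and faithful: given $F_1,F_2\colon\C_{\fp}\to\D'$ factoring through a common $\D'$, if $\alpha\colon F_1\Rightarrow F_2$ is a monoidal natural transformation computed in $\D$, then each component $\alpha_X$ is a morphism of $\D$ between objects of $\D'$, hence a morphism of the full subcategory $\D'$, and naturality and monoidality are equations holding in $\D$ and therefore in $\D'$; thus $\alpha$ already lives in $\Hom_{\fc\otimes}(\C_{\fp},\D')$, so no enlargement is needed, and two representatives agreeing in $\D$ agree already in their common $\D'$. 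Hence the comparison functor is an equivalence. The $\IK$-linear case is identical, reading all categories, functors and natural transformations as $\IK$-linear; a full subcategory of a $\IK$-linear category is automatically $\IK$-linear, so the closure construction is unchanged.

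The main obstacle is purely organizational: carrying the strong symmetric monoidal structure and all its coherence isomorphisms through the two $\Ind$-reductions, together with the unavoidable (if routine) size argument that the closure of an essentially small family of objects and morphisms of $\D$ under finite colimits and finite tensor products remains essentially small. Everything else in the argument is formal.
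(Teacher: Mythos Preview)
Your proof is correct and follows essentially the same approach as the paper: both reduce via $\C\simeq\Ind(\C_{\fp})$ to the comparison ${\varinjlim}_{\D'}\Hom_{\fc\otimes}(\C_{\fp},\D')\myiso\Hom_{\fc\otimes}(\C_{\fp},\D)$ and then verify this by the closure-under-finite-colimits-and-tensor-products construction. You supply a few details the paper leaves implicit (directedness of the indexing, the idempotent-completeness argument for $\Ind(\D')_{\fp}=\D'$), but the strategy is the same.
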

 
\begin{proof}
Let $\C_{\fp} \subseteq \C$ denote the full subcategory of finitely presentable objects. This is an essentially small finitely cocomplete tensor category with $\C = \Ind(\C_{\fp})$. It follows that the restriction $F \mapsto F|_{\C_{\fp}}$ defines an equivalence of categories
\[\Hom_{\c\otimes}(\C,\D) \myiso \Hom_{\fc\!\otimes}(\C_{\fp},\D),\]
where $\fc$ indicates finitely cocontinuous functors. The essential image of a finitely cocontinuous tensor functor $G : \C_{\fp} \to \D$ is an essentially small subcategory of $\D$. We take its closure $\D'$ under finite tensor products and finite colimits. Its explicit recursive description shows that~$\D'$ is essentially small. Then $\D'$ is a finitely cocomplete tensor category and $G$ factors through~$\D'$. If $G \to G'$ is a morphism of tensor functors, then we find a $\D'$ which fits for both $G$ and $G'$, so that the morphism also factors through $\D'$. We deduce that the canonical functor
\[{\varinjlim}_{\D'} \Hom_{\fc\!\otimes}(\C_{\fp},\D') \to \Hom_{\fc\!\otimes}(\C_{\fp},\D)\]
is an equivalence of categories.  Finally, we observe that $G \mapsto \Ind(G)$ defines an equivalence of categories
\[\Hom_{\fc\!\otimes}(\C_{\fp},\D') \myiso \Hom_{\c\otimes\fp}(\Ind(\C_{\fp}),\Ind(\D')) \myiso \Hom_{\c\otimes\fp}(\C,\Ind(\D')). \qedhere\]
\end{proof}

\begin{cor} \label{ess-diskret2}
Let $X$ be a quasi-compact quasi-separated $\IK$-scheme. Let $\C$ be any cocomplete $\IK$-linear tensor category. Then the category $\Hom_{\c\otimes/\IK}(\Q(X),\C)$ is essentially discrete.
\end{cor}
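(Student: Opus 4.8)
The plan is to bootstrap from the locally finitely presentable case, which is exactly \cref{ess-diskret}, by means of the reduction \cref{lfpred}. Since $X$ is quasi-compact quasi-separated, $\Q(X)$ is a locally finitely presentable $\IK$-linear tensor category. Hence \cref{lfpred}, in its $\IK$-linear form, applies with $\Q(X)$ playing the role of $\C$ there and with our $\C$ playing the role of the arbitrary cocomplete target $\D$. It yields an equivalence of categories
\[\Hom_{\c\otimes/\IK}(\Q(X),\C) \myiso {\varinjlim}_{\D'} \Hom_{\c\otimes\fp/\IK}\bigl(\Q(X),\Ind(\D')\bigr),\]
where $\D'$ ranges over the filtered poset of essentially small subcategories of $\C$ which are closed under finite colimits and finite tensor products.

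For each such $\D'$ the category $\Ind(\D')$ is a locally finitely presentable $\IK$-linear tensor category, so \cref{ess-diskret} tells us that $\Hom_{\c\otimes\fp/\IK}(\Q(X),\Ind(\D'))$ is essentially discrete. It therefore suffices to observe that a filtered colimit in $\Cat$ of essentially discrete categories is again essentially discrete. This is elementary: in a filtered colimit of categories the hom-sets are the filtered colimits of the hom-sets at each stage, and the transition functors send isomorphisms to isomorphisms, so in the colimit every morphism is an isomorphism; moreover a filtered colimit of sets, each of which has at most one element, again has at most one element, so there is at most one morphism between any two objects of the colimit category. Hence the left-hand side above is essentially discrete.

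I do not expect a genuine obstacle here; the proof is a one-line deduction once \cref{lfpred} and \cref{ess-diskret} are in place. The only points requiring a modicum of care are that \cref{lfpred} is invoked in the $\IK$-linear version, that $\Q(X)$ is indeed locally finitely presentable for $X$ quasi-compact quasi-separated (as recalled before \cref{locex}), and the short categorical remark above that essential discreteness passes through the filtered colimit appearing in \cref{lfpred}.
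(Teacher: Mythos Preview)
Your proof is correct and is exactly the argument the paper has in mind: the paper's own proof is the single sentence ``This follows from \cref{ess-diskret} and \cref{lfpred}'', and you have simply spelled out the deduction, including the elementary observation that essential discreteness is preserved under the filtered colimit appearing in \cref{lfpred}.
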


In other words, $\Q(X)$ is an essentially codiscrete object \cite[Definition 6.1]{Sch18} in the $2$-category of cocomplete $\IK$-linear tensor categories.

\begin{proof}
This follows from \cref{ess-diskret} and \cref{lfpred}.
\end{proof}

We now give another proof of our main theorems on (fiber) products, which have the advantage that they do not rely on the existence of bicategorical coproducts resp.\ pushouts.

\begin{proof}[Alternative proof of \cref{prodthm}]
Let $X,Y$ be two quasi-compact quasi-separated $\IK$-schemes. We have to show that for every cocomplete $\IK$-linear tensor category $\C$ the canonical functor
\[\Hom_{\c\otimes/\IK}(\Q(X \times_{\IK} Y),\C) \to \Hom_{\c\otimes/\IK}(\Q(X),\C) \times \Hom_{\c\otimes/\IK}(\Q(Y),\C)\]
is an equivalence of categories. (We remark that \cref{boxpres} implies that this functor is faithful.) The affine case follows as in our first proof of \cref{prodthm} from \cite[Proposition~2.2.3]{BC14}, and in the general case we may assume $X = X_1 \cup X_2$ for two quasi-compact open subschemes such that the claim holds for the pairs $(X_1,Y)$, $(X_2,Y)$ and $(X_1 \cap X_2,Y)$. Choose finitely presentable idals $I_i \to \O_X$ with $X_i = X_{I_i}$. Let $J_i \to \O_{X \times_\IK Y}$ be the image under $p_X^* : \Q(X) \to \Q(X \times_\IK Y)$, so that $(X \times_{\IK} Y)_{J_i} = X_{I_i} \times_{\IK} Y$. We can apply \cref{lfpred} to the three locally finitely presentable $\IK$-linear tensor categories $\Q(X)$, $\Q(Y)$ and $\Q(X \times_\IK Y)$ and deduce that it is enough to prove that the canonical functor
\[\Hom_{\c\otimes\fp\!/\IK}(\Q(X \times_\IK Y),\C) \to \Hom_{\c\otimes\fp\!/\IK}(\Q(X),\C) \times \Hom_{\c\otimes\fp\!/\IK}(\Q(Y),\C)\]
is an equivalence, where $\C$ is a locally finitely presentable $\IK$-linear tensor category. By \cref{lokal-def} it is enough to prove that the canonical functor
\[\Hom_{\c\otimes\fp\!/\IK}^{J_1,J_2}(\Q(X \times_\IK Y),\C) \to \Hom_{\c\otimes\fp\!/\IK}^{I_1,I_2}(\Q(X),\C) \times \Hom_{\c\otimes\fp\!/\IK}(\Q(Y),\C)\]
is an equivalence of categories. Because of the equivalences $\Q(X \times_\IK Y)_{J_i} \simeq \Q(X_i \times_\IK Y)$ and $\Q(X)_{I_i} \simeq \Q(X_i)$ (similarly for the intersections) and \cref{locdat}, this follows from our assumption that the claim is true for $(X_1,Y)$, $(X_2,Y)$ and $(X_1 \cap X_2,Y)$.
\end{proof}

\begin{proof}[Alternative proof of \cref{fiberprodthm}]
Let $X,Y$ be quasi-compact quasi-separated schemes over some quasi-compact quasi-separated $\IK$-scheme $S$. Let $\C$ be a cocomplete $\IK$-linear tensor category. The goal is to show that the canonical functor
\[\begin{tikzcd}[row sep=3ex]
\Hom_{\c\otimes/\IK}(\Q(X \times_{S} Y),\C) \ar{d} \\ \Hom_{\c\otimes/\IK}(\Q(X),\C) \times_{\Hom_{\c\otimes/\IK}(\Q(S),\C)} \Hom_{\c\otimes/\IK}(\Q(Y),\C)
\end{tikzcd}\]
is an equivalence of categories. If $S$ is affine, this follows from \cref{prodthm}. In the general case, we may assume $S=S_1 \cup S_2$ for two quasi-compact open subschemes such that the claim is true for schemes over $S_1$, $S_2$ and $S_1 \cap S_2$. Choose finitely presentable idals $I_i \to \O_S$ with $S_i = S_{I_i}$, and let $J_i \to \O_X$, $K_i \to \O_Y$, $L_i \to \O_{X \times_S Y}$ be the images, and define $X_i \coloneqq X_{J_i}$, $Y_i \coloneqq Y_{K_i}$. We can apply \cref{lfpred} to the three locally finitely presentable $\IK$-linear tensor categories $\Q(X)$, $\Q(Y)$ and $\Q(X \times_S Y)$ and deduce that it is enough to prove that the canonical functor
\[\begin{tikzcd}[row sep=3ex]
\Hom_{\c\otimes\fp\!/\IK}(\Q(X \times_{S} Y),\C) \ar{d} \\ \Hom_{\c\otimes\fp\!/\IK}(\Q(X),\C) \times_{\Hom_{\c\otimes\fp\!/\IK}(\Q(S),\C)} \Hom_{\c\otimes\fp\!/\IK}(\Q(Y),\C)
\end{tikzcd}\]
is an equivalence, where $\C$ is a locally finitely presentable $\IK$-linear tensor category. By \cref{lokal-def} it is enough to prove that the canonical functor
\[\begin{tikzcd}[row sep=3ex]
\Hom_{\c\otimes\fp\!/\IK}^{L_1,L_2}(\Q(X \times_{S} Y),\C) \ar{d} \\ \Hom_{\c\otimes\fp\!/\IK}^{J_1,J_2}(\Q(X),\C) \times_{\Hom_{\c\otimes\fp\!/\IK}^{I_1,I_2}(\Q(S),\C)} \Hom_{\c\otimes\fp\!/\IK}^{K_1,K_2}(\Q(Y),\C)
\end{tikzcd}\]
is an equivalence of categories. Because of the equivalences $\Q(X \times_S Y)_{L_i} \simeq \Q(X_i \times_{S_i} Y_i)$, $\Q(X)_{J_i} \simeq \Q(X_i)$, $\Q(Y)_{K_i} \simeq \Q(Y_i)$, similarly for the intersections, and \cref{locdat}, this follows from our assumption that the claim is true for the $S_i$-schemes $X_i$, $Y_i$ and the $S_1 \cap S_2$-schemes $X_1 \cap X_2$ and $Y_1 \cap Y_2$.
\end{proof}
 
Finally, we describe $\Hom_{\c\otimes/\IK}(\Q(X),\C)$ for specific examples of non-affine schemes.

\begin{ex}\label{exaff}
Let $X$ be a quasi-compact quasi-separated $\IK$-scheme and let $I \to \O_X$ be a quasi-coherent idal of finite presentation. Let $\smash{\overline{X} \coloneqq X \cup_{X_I} X}$ be the gluing of two copies $X_1,X_2$ of $X$ along the quasi-compact open subscheme $X_I \subseteq X$:
\begin{center}
\medskip
\begin{tikzpicture}[scale=0.8]
\begin{scope}
\clip (0,0) ellipse (18mm and 10mm);
\fill[black!30] (2,0) ellipse (18mm and 10mm);
\end{scope}
\draw (0,0) ellipse (18mm and 10mm);
\draw (2,0) ellipse (18mm and 10mm);
\draw node at (-0.7,0) {\scriptsize $X_1$};
\draw node at (2.7,0) {\scriptsize $X_2$};
\draw node at (1,0) {\scriptsize $X_I$};
\end{tikzpicture}
\medskip
\end{center} 
For example, when $X \coloneqq \IA^n_{\IK} = \Spec(\IK[T_1,\dotsc,T_n])$ is the $n$-dimensional affine space over $\IK$ and $I \coloneqq \langle T_1,\dotsc,T_n \rangle \hookrightarrow \O_X$, then $\smash{\overline{X}}$ is the $n$-dimensional affine space over $\IK$ with a double ``origin'' $\Spec(\IK)$. In the general case, the idal cover $I_1 \to \O_{\overline{X}} \leftarrow I_2$ corresponding to the open cover $\smash{X_1 \to \overline{X} \leftarrow X_2}$ is given by
\[I_1 |_{X_1} = \O_{X_1},~I_1 |_{X_2} = I, \quad I_2 |_{X_1} = I,~ I_2 |_{X_2} = \O_{X_2}.\]

If $\C$ is a locally finitely presentable $\IK$-linear tensor category, then by \cref{lokal-def} the category $\smash{\Hom_{\c\otimes\fp\!/ \IK}(\Q(\overline{X}),\C)}$ is equivalent to the following category: Objects are given by finitely presentable idal covers $J_1 \to \O_\C \leftarrow J_2$, tensor functors $H_i \in \Hom_{\c\otimes\fp\!/ \IK}(\Q(X),\C_{J_i})$, isomorphisms of idals $R_{J_1}(J_2) \myiso H_1(I)$, $R_{J_2}(J_1) \myiso H_2(I)$ and an isomorphism between the two associated tensor functors $\Q(X_I) \rightrightarrows \C_{J_1 \otimes J_2}$. The latter corresponds to an isomorphism between the two associated tensor functors $\Q(X) \rightrightarrows \C_{J_1 \otimes J_2}$.

Since $\C \simeq \C_{J_1} \times_{\C_{J_1 \otimes J_2}} \C_{J_2}$ by \cref{global-glue}, we can simplify this category as follows: Objects are finitely presentable idal covers $J_1 \to \O_\C \leftarrow J_2$ together with a cocontinuous $\IK$-linear tensor functor $H : \Q(X) \to \C$ preserving finitely presentable objects and an isomorphism of idals $\pi : J_1 \otimes J_2 \myiso H(I)$. We will not spell out the obvious notion of a morphism here. This is, of course, a generalization of the description of $\smash{\Hom(Y,\overline{X})}$ in terms of open covers $Y = Y_1 \cup Y_2$ and morphisms $h : Y \to X$ satisfying $h^{-1}(X_I)=Y_1 \cap Y_2$.

More generally, \cref{lfpred} implies that for every cocomplete $\IK$-linear tensor category $\C$ there is an equivalence between $\smash{\Hom_{\c\otimes/ \IK}(\Q(\overline{X}),\C)}$ and the category of cocontinuous \mbox{$\IK$-linear} tensor functors $H : \Q(X) \to \C$ with idal covers $J_1 \to \O_C \leftarrow J_2$ (not assumed to be finitely presentable) and an isomorphism of idals $\pi : J_1 \otimes J_2 \myiso H(I)$.
 
When $I$ is invertible, then $J_1$ and $J_2$ are invertible as well. In tensor categories of quasi-coherent modules every invertible object is locally free of rank $1$ and hence symtrivial \cite[Example 4.8.2]{Bra14}. Hence, the same will be true for $J_1$ and $J_2$ (since this is true in the universal example). Following James Dolan, symtrivial invertible objects have been called \emph{line objects} in \cite[Section 4.8]{Bra14}.

In the example $X=\IA^1_{\IK} = \Spec(\IK[T])$ and $I = \langle T \rangle$, when $\smash{\overline{X}}$ is the affine line with a~double origin, the idal $I \hookrightarrow \O_X$ is isomorphic to the idal $T : \O_X \to \O_X$. Moreover, the tensor functor $H : \Mod(\IK[T]) \simeq \Q(X) \to \C$ corresponds to an endomorphism $T : \O_\C \to \O_\C$. The isomorphism of idals $J_1 \otimes J_2 \myiso H(I)$ thus corresponds to an isomorphism $J_1 \otimes J_2 \myiso \O_\C$ in $\C$ such that 
\[\begin{tikzcd}[row sep=5ex, column sep=5ex]
J_1 \otimes J_2 \ar{rr}{\sim} \ar{dr} && \O_\C \ar{dl}{T} \\ & \O_\C  & 
\end{tikzcd}\]
commutes. But this makes $T : \O_\C \to \O_\C$ superfluous. Moreover, $J_1$ is invertible and hence dualizable, and the isomorphism $J_1 \otimes J_2 \myiso \O_\C$ becomes canonical if we require that it is part of a duality between $J_1$ and $J_2$.

Thus, objects of $\smash{\Hom_{\c\otimes/ \IK}(\Q(\overline{X}),\C)}$ correspond to idal covers of the form $\L \to \O_\C \leftarrow \L^*$, where $\L$ is a line object in $\C$ and $\L^*$ denotes its dual (hence inverse) object. It is routine to work out the morphisms from \cref{lokal-def}, which are unique isomorphisms by \cref{ess-diskret2}. We arrive at the following result.
\end{ex}

\begin{prop} \label{doubleorigin}
Let $\smash{\overline{\IA}_{\IK}^1}$ be the affine line over $\IK$ with a double origin. If $\C$ is a cocomplete $\IK$-linear tensor category, then the category
\[\Hom_{\c\otimes/ \IK}(\Q( \smash{\overline{\IA}_{\IK}^1}),\C)\]
is equivalent to the essentially discrete category of idal covers of the form $\L \to \O_\C \leftarrow \L^*$, where $\L \in \C$ is a line object. Two such idal covers $\L \to \O_\C \leftarrow \L^*$ and $\K \to \O_\C \leftarrow \K^*$ are isomorphic when there is an isomorphism $f : \L \to \K$ such that the diagram
\[\begin{tikzcd}[row sep={between origins,20pt}, column sep={between origins,50pt}]
\L \ar{dr} \ar{dd}[swap]{f} & & \L^* \ar{dl} \\ & \O_\C & \\
\K \ar{ur} & & \K^* \ar{ul} \ar{uu}[swap]{f^*}
\end{tikzcd}\]
commutes. \hfill $\square$
\end{prop}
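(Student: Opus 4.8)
The plan is to specialize the local description of tensor functors to $\overline{X} := \overline{\IA}_\IK^1$; most of this is already carried out in \cref{exaff}, so the work is to complete that discussion and, in addition, to read off the morphisms. First I would note that $\overline{X}$ is quasi-compact quasi-separated: it is covered by two copies $X_1,X_2$ of $X := \IA_\IK^1 = \Spec(\IK[T])$, glued along $\Spec(\IK[T,T^{-1}])$, which is affine. Hence \cref{ess-diskret2} applies and $\Hom_{\c\otimes/\IK}(\Q(\overline{X}),\C)$ is essentially discrete. Consequently it suffices to describe the objects up to isomorphism and to say precisely when two objects are isomorphic; the isomorphism will then automatically be unique, so no coherence condition of \cref{locdat} needs to be checked by hand.

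For the objects I would follow \cref{exaff}: reduce to a locally finitely presentable $\C$ by \cref{lfpred}, apply \cref{lokal-def} with the finitely presentable idal cover $I_1 \to \O_{\overline{X}} \leftarrow I_2$ described there, feed in the equivalences $\Q(\overline{X})_{I_i} \simeq \Q(X)$ from \cref{locex} and the affine case \cite[Proposition~2.2.3]{BC14}, and re-glue via \cref{global-glue}. After discarding the endomorphism $T_\C := H(T\colon \O_X \to \O_X) \in \End_\C(\O_\C)$, which is redundant because the equation $e_1 \otimes e_2 = T_\C \circ \bar\pi$ --- coming from the identification $\O_X \myiso I := \langle T\rangle$, $1 \mapsto T$, under which the idal $I \to \O_X$ is multiplication by $T$ --- expresses it in terms of the remaining data, an object reduces to a finitely presentable idal cover $J_1 \to \O_\C \leftarrow J_2$ together with an isomorphism $\bar\pi : J_1 \otimes J_2 \myiso \O_\C$. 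Here I would take care to justify that $J_1$ and $J_2$ are not merely invertible but symtrivial, hence line objects: by the chain of equivalences the data $(J_1,J_2,\bar\pi)$ is the image of the tautological data on $\overline{X}$ under a cocontinuous tensor functor $\Q(\overline{X}) \to \C$, and $I_1,I_2$ are invertible sheaves on $\overline{X}$, hence symtrivial \cite[Example~4.8.2]{Bra14}, and symtriviality is preserved by tensor functors. Finally, since the groupoid of inverses of a line object $\L$ is contractible, the pair $(J_2,\bar\pi)$ may be replaced canonically by $(\L^*,\mathrm{ev}_\L)$ with $\L := J_1$; this matches the objects with the idal covers $\L \to \O_\C \leftarrow \L^*$, $\L$ a line object, and the reduction is reversible.

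For the morphisms I would re-run the same identifications while tracking a morphism $(J_1,J_2,H,\pi) \to (J_1',J_2',H',\pi')$ of the intermediate data: by \cref{locdat}, re-glued via \cref{global-glue}, it consists of two morphisms of idals $f_i : J_i \to J_i'$ together with a morphism $\eta : H \to H'$ of tensor functors $\Q(X) \simeq \Mod(\IK[T]) \to \C$. Since the latter category is discrete (the affine case again), $\eta$ is the identity, and this forces the $f_i$ to be isomorphisms with $\bar\pi' \circ (f_1 \otimes f_2) = \bar\pi$, the identity $T_\C = T_\C'$ being then automatic. Writing $f := f_1 : \L \to \K$ with $\K := J_1'$, the compatibility $\mathrm{ev}_\K \circ (f \otimes f_2) = \mathrm{ev}_\L$ forces $f_2 = (f^*)^{-1}$, and the two requirements that $f_1$ and $f_2$ be morphisms of idals turn into exactly the upper and lower triangles of the hexagon in the statement. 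Transporting back to an arbitrary cocomplete $\C$ by \cref{lfpred} then completes the argument. I expect the main obstacle to be precisely this final bookkeeping --- propagating symtriviality of $J_1,J_2$ through the equivalences, checking that $T_\C$ genuinely drops out, and verifying that the remaining coherence cells of \cref{locdat} collapse, under essential discreteness, to the stated commutative hexagon; everything else is a routine application of \cref{lokal-def,global-glue,lfpred,ess-diskret2} together with the affine case.
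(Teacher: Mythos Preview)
Your proposal is correct and follows essentially the same route as the paper: the proof is the content of \cref{exaff} specialized to $X=\IA^1_\IK$, $I=\langle T\rangle$, together with \cref{ess-diskret2} for essential discreteness, and the paper likewise leaves the morphism analysis as ``routine to work out from \cref{lokal-def}''. Your write-up is in fact more explicit than the paper's on two points---propagating symtriviality of $I_1,I_2$ to $J_1,J_2$ via a tensor functor, and unwinding the morphisms to the commutative hexagon---which is exactly the bookkeeping the paper elides with its $\square$.
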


For the projective line, in which two copies of $\IA^1$ are glued in a different way, we get a~similar description.

\begin{prop} \label{projline}
Let $\C$ be a cocomplete $\IK$-linear tensor category. Then the category
\[\Hom_{\c\otimes/ \IK}(\Q(\smash{\IP^1_{\IK}}),\C)\]
is equivalent to the essentially discrete category of idal covers of the form $\L \to \O_\C \leftarrow \L$, where $\L \in \C$ is a line object. Two such idal covers $\L \to \O_\C \leftarrow \L$ and $\K \to \O_\C \leftarrow \K$ are isomorphic when there is an isomorphism $f : \L \to \K$ such that
\[\begin{tikzcd}[row sep={between origins,20pt}, column sep={between origins,50pt}]
\L \ar{dr} \ar{dd}[swap]{f} & & \L \ar{dl} \ar{dd}{f} \\ & \O_\C & \\
\K \ar{ur} & & \K \ar{ul} 
\end{tikzcd}\]
commutes. Here, $\L$ corresponds to the Serre twist $\O(-1)$.
\end{prop}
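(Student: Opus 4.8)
The plan is to transcribe the analysis of \cref{exaff} leading to \cref{doubleorigin}, the only genuine change being that $\IP^1_\IK$ glues its two affine charts along the torus in a twisted way. Concretely I would write $\IP^1_\IK = U_0 \cup U_1$ with $U_0 = \Spec\IK[T]$, $U_1 = \Spec\IK[S]$ and $U_0 \cap U_1 = \Spec\IK[T,T^{-1}]$, where on the overlap $S = T^{-1}$ --- this is exactly where $\IP^1$ differs from the affine line with a double origin, whose overlap gluing is the identity $T = T$. This open cover corresponds to the idal cover $I_1 \to \O \leftarrow I_2$ of $\Q(\IP^1_\IK)$, where $I_i = (\O(-1) \xrightarrow{e_i} \O)$ is the ideal sheaf of the $\IK$-point $\{x_i = 0\}$: each $I_i$ is a finitely presentable idal by \cref{idal-prop}(1) with $(\IP^1_\IK)_{I_i} = U_i$, the two points are disjoint so $(e_0,e_1) : \O(-1)^{\oplus 2} \to \O$ is an epimorphism and hence an idal cover by \cref{covercrit}, and $I_1 \otimes I_2 = (\O(-2) \to \O)$ with $(\IP^1_\IK)_{I_1 \otimes I_2} = U_0 \cap U_1$ by \cref{Xprod}. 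By \cref{locex} and \cref{Xprod} one gets $\Q(\IP^1_\IK)_{I_i} \simeq \Q(U_i) \simeq \Mod(\IK[T])$ and $\Q(\IP^1_\IK)_{I_1 \otimes I_2} \simeq \Q(U_0 \cap U_1) \simeq \Mod(\IK[T,T^{-1}])$, and under the first equivalence the idal $R_{I_1}(I_2)$ becomes $\O \xrightarrow{T} \O$, and symmetrically for the other chart with $T$ replaced by $S$.

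For a cocomplete $\IK$-linear tensor category $\C$ I would then invoke \cref{lfpred} to reduce to the locally finitely presentable case, and \cref{lokal-def} to replace $\Hom_{\c\otimes\fp/\IK}(\Q(\IP^1_\IK),\C)$ by the localized category of \cref{locdat} for the cover $(I_1,I_2)$. Exactly as in \cref{exaff}, an object there is a finitely presentable idal cover $J_1 \to \O_\C \leftarrow J_2$ of $\C$, two cocontinuous tensor functors $H_i : \Q(U_i) \to \C_{J_i}$ --- each encoded by an endomorphism of $\O_{\C_{J_i}}$, since $\Q(U_i) \simeq \Mod(\IK[T])$ is the universal locally finitely presentable $\IK$-linear tensor category carrying an endomorphism of the unit \cite[Proposition 2.2.3]{BC14} --- isomorphisms of idals $\pi_1 : R_{J_1}(J_2) \myiso H_1(R_{I_1}(I_2))$ and $\pi_2$ symmetrically, and a coherence isomorphism $\delta$. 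Because $R_{I_1}(I_2) \cong (\O \xrightarrow{T} \O)$, giving $\pi_1$ is the same as giving an isomorphism $R_{J_1}(J_2) \myiso \O_{\C_{J_1}}$ and it pins down the endomorphism attached to $H_1$; likewise for $\pi_2$, so an object amounts to $(J_1,J_2,\pi_1,\pi_2,\delta)$. Here is the one place the computation departs from \cref{exaff}: $\delta$ is an isomorphism between two cocontinuous tensor functors $\Mod(\IK[T,T^{-1}]) \rightrightarrows \C_{J_1 \otimes J_2}$, each determined by where the universal invertible endomorphism goes, and since the two chart coordinates are related on the overlap by $S = T^{-1}$ rather than by the identity, one of these functors is precomposed with the inversion automorphism of $\Mod(\IK[T,T^{-1}])$. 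Carrying out the same bookkeeping as in \cref{exaff} with this inversion inserted, one finds that $\delta$ exists --- and is then unique, being a monoidal natural isomorphism between tensor functors out of $\Mod(\IK[T,T^{-1}])$ --- if and only if the two isomorphisms $R_{J_1}(J_1) \myiso \O_{\C_{J_1}} \myosi R_{J_1}(J_2)$ and $R_{J_2}(J_2) \myiso \O_{\C_{J_2}} \myosi R_{J_2}(J_1)$ built from $\pi_1,\pi_2$ and the idal maps become compatible over $\C_{J_1 \otimes J_2}$; by \cref{global-glue} this is the same as saying that they descend to an isomorphism $\ell : J_1 \myiso J_2$ in $\C$. (For the double origin the same bookkeeping without the inversion produces instead an isomorphism $J_1 \otimes J_2 \myiso \O_\C$, which is why there $J_2 \cong J_1^*$.)

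Granting this, an object of the localized category is a finitely presentable idal cover $J_1 \to \O_\C \leftarrow J_2$ together with an isomorphism $\ell : J_1 \myiso J_2$; writing $\L := J_1$ and transporting $J_2$ along $\ell$, this is a finitely presentable object $\L$ with two idals $a,b : \L \to \O_\C$ forming a cover. I would then observe that $\L$ is automatically a line object: one has $R_{J_1}(\L) \cong \O_{\C_{J_1}}$ (as $J_1$ is inverted in $\C_{J_1}$) and, via $\pi_2$ and $\ell$, $R_{J_2}(\L) \cong \O_{\C_{J_2}}$, so $\L$ is invertible and symtrivial after both $R_{J_1}$ and $R_{J_2}$, and invertibility and symtriviality are local for idal covers by \cref{fplok}. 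For morphisms, \cref{ess-diskret2} already tells us the category is essentially discrete, so it only remains to record when $(\L,a,b) \cong (\K,a',b')$: since each object corresponds (via \cref{lokal-def} and \cref{global-glue}) to the tensor functor $H : \Q(\IP^1_\IK) \to \C$ with $H(\O(-1)) = \L$, $H(e_0) = a$, $H(e_1) = b$, and $\O(-1)$ generates $\Q(\IP^1_\IK)$ as a cocomplete tensor category, a monoidal natural isomorphism $H \myiso H'$ is exactly an isomorphism $f : \L \myiso \K$ with $a'f = a$ and $b'f = b$ (using $\eta_{\O_\C} = \id$), i.e.\ commutativity of the displayed square, and such $f$ is unique. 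That $\L$ corresponds to the Serre twist $\O(-1)$ is visible in the universal instance, where $J_1 = J_2 = \O(-1)$. Finally the general cocomplete case follows from the locally finitely presentable one by \cref{lfpred}, exactly as in \cref{exaff}.

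The hard part will be the middle step. The coherence datum $\delta$ of \cref{locdat} is a priori a diagram of localized functors knitted together by the equivalences of \cref{twoidal}, and one must check patiently that, once the twisted gluing $S = T^{-1}$ is fed in, it unwinds to precisely the descent datum gluing $J_1$ to $J_2$ --- the very same kind of bookkeeping performed implicitly in \cref{exaff}, now with the inversion automorphism of $\Mod(\IK[T,T^{-1}])$ inserted in one leg. Everything else is a direct transcription of \cref{exaff} and \cref{doubleorigin}.
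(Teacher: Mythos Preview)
Your proposal is correct and follows essentially the same approach as the paper: reduce to the locally finitely presentable case via \cref{lfpred}, apply \cref{lokal-def} to the standard two-chart cover of $\IP^1_\IK$, observe that the coherence datum $\delta$ (reflecting the twisted gluing $S=T^{-1}$) forces the local isomorphisms $R_{J_i}(J_j)\myiso\O_{\C_{J_i}}$ to glue via \cref{global-glue} to an isomorphism $J_1\myiso J_2$, and then invoke \cref{fplok} and \cref{ess-diskret} to conclude. The paper's proof is terser but makes exactly these moves; your additional commentary on how the inversion distinguishes this case from \cref{doubleorigin} is accurate and helpful.
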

 
\begin{proof}
By \cref{lfpred} we may assume that $\C$ is a locally finitely presentable $\IK$-linear tensor category and only have to study the category $\Hom_{\c\otimes\fp\!/ \IK}(\Q(\smash{\IP^1_{\IK}}),\C)$. We use the open cover $\IP^1_{\IK} = \Spec(\IK[T_1/T_0]) \cup \Spec(\IK[T_0/T_1])$ and \cref{lokal-def}. It follows that the category is equivalent to the category of finitely presentable idal covers $J_0 \rightarrow \O_\C \leftarrow J_1$ equipped with isomorphisms $\alpha : R_{J_1}(J_0) \myiso \O_{\C_{J_0}}$, $\beta : R_{J_0}(J_1) \myiso \O_{\C_{J_1}}$ (not of idals) which become inverse in $\C_{J_1 \otimes J_2}$ upon the usual identifications. This means that we have isomorphisms $\alpha : R_{J_1}(J_0) \myiso R_{J_1}(J_1)$ and $\beta^{-1} : R_{J_0}(J_0) \myiso R_{J_0}(J_1)$ which coincide in $\C_{J_1 \otimes J_2}$ and hence, by \cref{global-glue}, glue to an isomorphism $J_0 \myiso J_1$ in $\C$. That $J_i$ is a line object follows from \cref{fplok}. The rest follows from \cref{ess-diskret}.
\end{proof}

\begin{rem}
There is also a universal property for the $n$-dimensional projective space \cite[Theorem 3.1]{Bra11}. Namely, cocontinuous $\IK$-linear tensor functors $\Q(\IP^n_{\IK}) \to \C$ correspond to line objects $\L \in \C$ equipped with a morphism $s  : \L^{n+1} \to \O_\C$ such that $s$ is the coequalizer of the pair $\L^{n+1} \otimes s,\, s \otimes \L^{n+1} : \L^{n+1} \otimes \L^{n+1} \to \L^{n+1}$. See \cite[Theorem~5.10.11]{Bra14} for a generalization to projective schemes over arbitrary base schemes.
\end{rem}

For the affine plane with a double origin $\smash{\overline{\IA}_{\IK}^2}$ the idals $I_1,I_2$ in \cref{exaff} are not invertible, not even dualizable (when $\IK \neq 0$), so that the universal property becomes more complicated to state.

\begin{prop} \label{doubleorigin2}
If $\C$ is a cocomplete $\IK$-linear tensor category, then the category
\[\Hom_{\c\otimes/\IK}(\Q(\smash{\overline{\IA}_{\IK}^2}),\C)\]
is equivalent to the essentially discrete category of idal covers $J_1 \rightarrow \O_\C \leftarrow J_2$ and morphisms $p : \O_\C^2 \to J_1 \otimes J_2$ such that $p$ is the cokernel of $(y;-x) : \O_\C \to \O_\C^2$, where $(x,y) : \O_\C^2 \to \O_\C$ is defined as the composition $\O_\C^2 \xrightarrow{\,p\,} J_1 \otimes J_2 \to \O_\C$.
\end{prop}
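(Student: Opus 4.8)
The plan is to argue exactly as for \cref{doubleorigin} and \cref{projline}: start from the reformulation of $\Hom_{\c\otimes/\IK}(\Q(\overline{\IA}_\IK^2),\C)$ obtained at the end of \cref{exaff}, and then make the functor $H$ and the idal $H(I)$ completely explicit. By \cref{exaff} (using \cref{lfpred} to remove the finite-presentability restriction) the category $\Hom_{\c\otimes/\IK}(\Q(\overline{\IA}_\IK^2),\C)$ is equivalent to the category of quadruples $(H,J_1,J_2,\pi)$, where $J_1 \to \O_\C \leftarrow J_2$ is an idal cover of $\C$, $H : \Q(\IA_\IK^2) \simeq \Mod(\IK[T_1,T_2]) \to \C$ is a cocontinuous $\IK$-linear tensor functor, and $\pi : J_1 \otimes J_2 \myiso H(I)$ is an isomorphism of idals, with $I = \langle T_1,T_2 \rangle \hookrightarrow \O_{\IA_\IK^2}$. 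By the affine case \cite[Proposition 2.2.3]{BC14}, such functors $H$ correspond to $\IK$-algebra homomorphisms $\IK[T_1,T_2] \to \End_\C(\O_\C)$, and since $\End_\C(\O_\C)$ is commutative these are simply pairs $x \coloneqq H(T_1)$ and $y \coloneqq H(T_2)$ of endomorphisms of $\O_\C$, with no further condition.

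The heart of the argument is to identify $H(I)$. On $\IA_\IK^2$ the ideal $I$ has the Koszul presentation
\[\O \xrightarrow{(T_2;\,-T_1)} \O^2 \xrightarrow{(T_1,\,T_2)} I \to 0,\]
which is right exact because the kernel of $(T_1,T_2)$ is generated by the syzygy $(T_2,-T_1)$. Applying the right exact tensor functor $H$, and using that $H$ preserves $\O$ and finite direct sums, yields a cokernel diagram $\O_\C \xrightarrow{(y;\,-x)} \O_\C^2 \xrightarrow{p_0} H(I) \to 0$, in which the structure morphism $H(I) \to \O_\C$ induced by $I \hookrightarrow \O$ is the unique factorization of $(x,y) : \O_\C^2 \to \O_\C$ through $p_0$. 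Transporting along $\pi$, we set $p \coloneqq \pi^{-1} \circ p_0 : \O_\C^2 \to J_1 \otimes J_2$; then $p$ exhibits $J_1 \otimes J_2$ as a cokernel of $(y;-x)$, and, since $\pi$ is a morphism of idals, the composite $\O_\C^2 \xrightarrow{p} J_1 \otimes J_2 \to \O_\C$ equals $(x,y)$ — this is exactly the datum of the statement. Conversely, given an idal cover $J_1 \to \O_\C \leftarrow J_2$ and a morphism $p : \O_\C^2 \to J_1 \otimes J_2$ which is a cokernel of $(y;-x)$ with $(x,y)$ the composite named in the statement, the pair $(x,y)$ defines a functor $H$ for which $H(I) = \coker((y;-x))$ is canonically identified via $p$ with $J_1 \otimes J_2$; this identification $\pi$ respects the maps to $\O_\C$ because $p$ is an epimorphism and both structure morphisms pull back along $p$ to $(x,y)$, so $\pi$ is an isomorphism of idals. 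These constructions are mutually inverse on objects by the universal property of cokernels.

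It remains to match morphisms and record essential discreteness. A morphism in the category of \cref{exaff} is a pair of idal morphisms $J_i \to J_i'$ together with a compatible morphism of tensor functors $H \to H'$, and unwinding the correspondence shows that this is the same as a pair of idal morphisms $J_i \to J_i'$ compatible with $p$ and $p'$. Finally, by \cref{ess-diskret2} the category $\Hom_{\c\otimes/\IK}(\Q(\overline{\IA}_\IK^2),\C)$ is essentially discrete, hence so is the equivalent category of triples $(J_1,J_2,p)$; in particular two such triples are isomorphic precisely when there exist (then necessarily unique) idal isomorphisms $J_i \myiso J_i'$ intertwining $p$ with $p'$.

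I expect the main obstacle to be the identification of $H(I)$: getting the presentation of $I$, and the induced presentation of $H(I)$, correct, and in particular keeping track of the structure morphism $H(I) \to \O_\C$ carefully enough that after transporting along $\pi$ it is literally the composite $\O_\C^2 \to J_1 \otimes J_2 \to \O_\C$ called $(x,y)$ in the statement; together with the dual check that an arbitrary such $p$ arises from a genuine functor $H$, which rests on $p$ being an epimorphism so that $\pi$ automatically respects the idal structure. Note that, in contrast to \cref{doubleorigin}, here $J_1$ and $J_2$ need not be dualizable when $\IK \neq 0$, so $\pi$ cannot be normalized away and the morphism $p$ is genuinely part of the data.
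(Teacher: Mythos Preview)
Your proof is correct and follows exactly the route the paper takes: invoke the description from \cref{exaff} and then make $H(I)$ explicit via the Koszul presentation $\IK[T_1,T_2] \xrightarrow{(T_2;-T_1)} \IK[T_1,T_2]^2 \xrightarrow{(T_1,T_2)} \langle T_1,T_2\rangle \to 0$. The paper's own proof is a two-liner citing \cref{exaff} and this exact sequence; you have simply unpacked the details (the correspondence $H \leftrightarrow (x,y)$, the transport along $\pi$, the converse direction, and the appeal to \cref{ess-diskret2}) that the paper leaves implicit.
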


\begin{proof}
This follows from \cref{exaff} and the exact sequence of $\IK[T_1,T_2]$-modules
\[\begin{tikzcd}[column sep=8ex]
\IK[T_1,T_2] \ar{r}{(T_2;-T_1)} &  \IK[T_1,T_2]^2 \ar{r}{(T_1,T_2)} & \langle T_1,T_2 \rangle \ar{r} & 0.
\end{tikzcd} \qedhere\] 
\end{proof}

We have seen in our proofs of \cref{prodthm}, \cref{fiberprodthm} and \cref{ess-diskret2} how to use idal covers to reduce theorems about the cocomplete tensor category of quasi-coherent modules on a scheme to the affine case. Let us sketch another application of this type.

\begin{rem} \label{tangentcat}
In \cite[Section 5.2]{Bra14} tangent tensor categories have been defined as follows. Let $\Cat_{\c\otimes/\IK}$ denote the $2$-category of cocomplete $\IK$-linear tensor categories. Let $\C \in \Cat_{\c\otimes/\IK}$ and consider the coslice $2$-category $\C \backslash {\Cat_{\c\otimes/\IK}}$ with its $2$-endofunctor $\A \mapsto \A[\varepsilon]/\varepsilon^2$. Now if $\C \backslash \D  \in \C \backslash {\Cat_{\c\otimes/\IK}}$ is an object (i.e.\ a morphism $\C \to \D$), then its \emph{tangent tensor category} $T(\C \backslash \D) \in \C \backslash {\Cat_{\c\otimes/\IK}}$ is defined by the universal property
\[\Hom_{\C \backslash {\Cat_{\c\otimes/\IK}}}\bigl(T(\C \backslash \D),\A\bigr) \simeq \Hom_{\C \backslash {\Cat_{\c\otimes/\IK}}}\bigl(\C \backslash \D, \A[\varepsilon]/\varepsilon^2\bigr).\]
The results of \cite{Bra20} can be used to show that $T(\C \backslash \D)$ exists when $\C,\D$ are locally finitely presentable tensor categories and $\C \to \D$ preserves finitely presentable objects.

If $X \to S$ is a morphism of schemes which is affine or projective, then we have
\[T\bigl(\Q(S) \backslash {\Q(X)}\bigr) \simeq \Q\bigl(T(X/S)\bigr)\]
by \cite[Theorem 5.12.17]{Bra14}, where $\smash{T(X/S) \coloneqq \Spec \Sym \Omega^1_{X/S} \in {\Sch}/S}$ is the usual tangent bundle. The proof of the projective case in loc.cit.\ is quite long-winded and uses a tensor categorical Euler sequence. \mbox{However,} using \cref{lokal-def} and \cref{global-glue} as we did before, it is straight forward to generalize this equivalence from the affine case to every morphism $X \to S$ of quasi-compact quasi-separated schemes. 
\end{rem}


\end{document}